\newtheorem{theorem}{Theorem}
\newtheorem{lemma}[theorem]{Lemma}
\newtheorem{corollary}[theorem]{Corollary}
\theoremstyle{definition}
\newtheorem*{definition}{Definition}
\newcommand{\dia}[2]{\draw (#1+0.5*#2,0.866*#2) -- +(0.5,0.866) -- +(1,0) -- +(0.5,-0.866) -- cycle;}
\newcommand{\sla}[2]{\draw (#1+0.5*#2,0.866*#2) -- +(0.5,0.866) -- +(1.5,0.866) -- +(1,0) -- cycle;}
\newcommand{\bsla}[2]{\draw (#1+0.5*#2,0.866*#2) -- +(-0.5,0.866) -- +(0.5,0.866) -- +(1,0) -- cycle;}
\newcommand{\gsla}[2]{\filldraw[fill=gray!50] (#1+0.5*#2,0.866*#2) -- +(0.5,0.866) -- +(1.5,0.866) -- +(1,0) -- cycle;}
\newcommand{\gbsla}[2]{\filldraw[fill=gray!30] (#1+0.5*#2,0.866*#2) -- +(-0.5,0.866) -- +(0.5,0.866) -- +(1,0) -- cycle;}
\newcommand{\gdia}[2]{\filldraw[fill=gray!30] (#1+0.5*#2,0.866*#2) -- +(0.5,0.866) -- +(1,0) -- +(0.5,-0.866) -- cycle;}
\newcommand{\dgdia}[2]{\filldraw[fill=gray!80] (#1+0.5*#2,0.866*#2) -- +(0.5,0.866) -- +(1,0) -- +(0.5,-0.866) -- cycle;}
\newcommand{\lgsla}[2]{\filldraw[fill=gray!30] (#1+0.5*#2,0.866*#2) -- +(0.5,0.866) -- +(1.5,0.866) -- +(1,0) -- cycle;}
\newcommand{\lgbsla}[2]{\filldraw[fill=gray!30] (#1+0.5*#2,0.866*#2) -- +(-0.5,0.866) -- +(0.5,0.866) -- +(1,0) -- cycle;}
\newcommand{\lgdia}[2]{\filldraw[fill=gray!30] (#1+0.5*#2,0.866*#2) -- +(0.5,0.866) -- +(1,0) -- +(0.5,-0.866) -- cycle;}
\newcommand{\ldia}[3]{\draw (#1+0.5*#2,0.866*#2) -- +(0.5,0.866) -- +(1,0) -- +(0.5,-0.866) -- cycle;
\node(A) at (#1+0.5*#2+0.5,0.866*#2){\small #3};}
\newcommand{\lsla}[3]{\draw (#1+0.5*#2,0.866*#2) -- +(0.5,0.866) -- +(1.5,0.866) -- +(1,0) -- cycle;
\node(A) at (#1+0.5*#2+0.75,0.866*#2+0.433){\small #3};}
\newcommand{\lbsla}[3]{\draw (#1+0.5*#2,0.866*#2) -- +(-0.5,0.866) -- +(0.5,0.866) -- +(1,0) -- cycle;
\node(A) at (#1+0.5*#2+0.25,0.866*#2+0.433){\small #3};}
\newcommand{\gsq}[2]{\filldraw[fill=gray!50] (#1,#2) -- +(1,0) -- +(1,1) -- +(0,1) -- cycle;} 
\begin{document}
\title{Tredoku Patterns}
\author{Simon R.\ Blackburn\\
Department of Mathematics\\
Royal Holloway, University of London\\
Egham, Surrey TW20 0EX, United Kingdom\\
\texttt{s.blackburn@rhul.ac.uk}}

\maketitle

\begin{abstract}
Donald A. Preece gave two talks in 2013, in which he introduced the notion of tredoku patterns. These are certain configurations of diamond-shaped tiles, inspired by the sukodu-like puzzle that appears in newspapers. Very sadly, Prof.\  Preece died in January 2014, before he was able to complete his work on this problem. This paper reconstructs his work, and settles (in the affirmative) the conjectures he proposed.
\end{abstract}

\section{Introduction}
\label{sec:introduction}

As the reader knows, sudoku is a game based on a $3\times 3$ grid of squares. Each square is subdivided into nine smaller square entries, some of which are filled with the numbers 1 to 9. The aim is to fill in the blank entries so that the entries in any square, any row (crossing a horizontal run of three squares) and any column (crossing a vertical run of three squares) form a permutation of $\{1,2,\ldots ,9\}$. Tredoku\footnote{Tredoku is a trademark of Mindome Ltd. Tredoku puzzles regularly appear in \emph{The Times} (of London), for example.} is a variant of sudoku invented by Eyal Amitzur, where the squares are replaced by a collection of diamonds (see, for example, Figure~\ref{fig:example}); rows might change direction as they cross two adjacent diamonds that share an edge. Tredoku is presented as a 3-dimensional version of sudoku, with the diamonds shaded so that the region appears to be part of a surface of an object made from cubes~\cite{Mindome11}. Latex for the display of Tredoku puzzles has been explored in~\cite{Rowley12}.

Donald Preece asked: What arrangements of diamonds might be used in tredoku puzzles? He called these arrangements tredoku patterns. The question can be thought of as being part of the theory of lozenge tilings, studied as part of combinatorial geometry, algebraic combinatorics, and physics. (A lozenge tiling is a tiling of a region of the plane with diamonds.) The history of such tilings in combinatorics goes back as least as far as 1896, where Percy MacMahon~\cite{MacMahon1896} conjectured a formula for the number of lozenge tilings of a hexagonal region of the plane; he established this formula in 1916~\cite{MacMahon1916}. (MacMahon phrased the problem in terms of plane partitions: two-dimensional arrays of integers which are non-decreasing in both rows and columns. These objects can be visualised as piles of cubes, the height of each pile corresponding to the integers in the array. The appropriate projection of these cubes to a plane gives a lozenge tiling.) In physics, lozenge tilings are studied as simple instantiations of the 3-dimensional Ising model; in particular there have been some beautiful results on random tilings motivated by this model~\cite{Gorin21}. Lozenge tilings, and the related domino tilings, are the basic examples which are generalised by the theory of dimers~\cite{Kenyon00}. One interesting aspect of tredoku patterns is the introduction of the concept of a run of tiles (which we define below) to this research area.

Donald Preece gave two talks~\cite{Preece1,Preece2} on this topic in 2013 in which he defined tredoku patterns, and provided many examples. (The precise definition is below. In Appendix~\ref{sec:Preece}, the abstract from~\cite{Preece1} is reproduced, which contains the definition in his own words.) In these talks, he provided various constructions for tredoku patterns, and proposed questions and conjectures concerning which parameters of tredoku patterns are possible. Prof.\  Preece died in January 2014, before he was able to complete his work on tredoku patterns. This paper solves (in the affirmative) the conjectures he proposed.

In the remainder of the introduction, I will define tredoku patterns, and the parameters we are considering. I then state the main theorem of the paper, which characterises the parameters where tredoku patterns exist, positively resolving Preece's conjectures.

\begin{definition}
Consider a tiling of the plane using equilateral triangles, meeting along their edges. A \emph{pattern} $\mathcal{T}$ of diamond-shaped tiles is a finite set of non-overlapping tiles, each the union of two triangles from our tiling that meet along an edge.
\end{definition}
Figure~\ref{fig:example} is an example of a pattern. We see that each tile can be oriented in one of three ways (shaded differently in the figure).

\begin{definition}
A \emph{run of length~$\ell$} in a pattern $\mathcal{T}$ is a maximal sequence $t_1,t_2,\ldots ,t_\ell$ of distinct tiles in $\mathcal{T}$ such that adjacent tiles in the sequence are edge-adjacent, and these edges are all parallel.
\end{definition}
\begin{figure}
\begin{center}
\begin{tikzpicture}[fill=gray!50, scale=0.5,rotate=90,
vertex/.style={circle,inner sep=2,fill=black,draw}]

\dia{0}{0}
\gbsla{1}{0}
\gbsla{2}{0}
\gsla{1}{-1}
\gsla{2}{-1}
\gbsla{2}{-2}
\gsla{1}{1}

\end{tikzpicture}
\end{center}
\caption{A tredoku pattern with $7$ tiles.}
\label{fig:example}
\end{figure}

\begin{definition}
A \emph{tredoku pattern} $\mathcal{T}$ is a pattern of $3$ or more diamond-shaped tiles that satisfies the following four conditions:
\begin{enumerate}
\item If there exists a run of length $\ell$ in $\mathcal{T}$, then $\ell\in\{1,3\}$.
\item The pattern is edge-connected. More precisely: Define a graph $\Gamma(\mathcal{T})$ whose vertices correspond to the tiles in $\mathcal{T}$, with vertices being adjacent if and only if the corresponding tiles share an edge. Then $\Gamma(\mathcal{T})$ is connected.
\item The pattern has no `holes'. More precisely: Define $|\mathcal{T}|$ to be the region of the plane covered by the tiles in $\mathcal{T}$. Then $|\mathcal{T}|$ is simply connected. 
\item If any tile of the pattern $\mathcal{T}$ is removed, $\mathcal{T}$ remains connected (possibly via the corners of tiles touching at a vertex). More precisely: For any tile $t\in\mathcal{T}$, the region $|\mathcal{T}\setminus\{t\}|$ of the plane is path connected for every $t\in\mathcal{T}$.
\end{enumerate}
\end{definition}
So the pattern in Figure~\ref{fig:example} is a tredoku pattern with $7$ tiles, $4$ runs of length $3$, and $2$ runs of length $1$. There are many more examples of tredoku patterns given in Section~\ref{sec:constructions}.

If we do not specify a length, by a run we mean a run of length 3. 

\begin{definition}
A \emph{leaf} in a tredoku pattern is a tile that is contained in exactly one run of length $3$ (and one run of length $1$, since every tile is contained in two runs). 
\end{definition}

For example, the pattern in Figure~\ref{fig:example} contains $2$ leaves, namely its left- and right-most tiles. We cannot have a tile than is contained in two runs of length $1$, because such a tile would be isolated; this would contradict the fact that a tredoku pattern is connected and contains more than one tile. So a non-leaf is contained in exactly two runs of length $3$. We have the following lemma:

\begin{lemma}\textup{Preece~\cite{Preece1,Preece2}.}
\label{lem:counting}
If there exists a tredoku pattern with $\tau$ tiles, $\rho$ runs and $\ell$ leaves then $0\leq\ell\leq\tau$ and $\ell=2\tau-3\rho$.
\end{lemma}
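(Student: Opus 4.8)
The plan is to establish the identity $\ell = 2\tau - 3\rho$ by double counting incidences between tiles and runs; the inequalities $0\le\ell\le\tau$ will then be immediate. By Condition~1 every run has length $1$ or $3$, so let me write $\rho_1$ and $\rho_3$ for the numbers of runs of length $1$ and of length $3$, so that $\rho=\rho_3$ (the runs of length $1$ being accounted for separately through the leaves). Recall from the remark after the definition of a leaf that each tile lies in exactly two runs, one associated with each of its two pairs of parallel edges. Counting pairs $(t,R)$ with $t$ a tile lying in the run $R$ in two ways gives, on the one hand, $2\tau$ (two runs through each of the $\tau$ tiles), and on the other hand $\rho_1+3\rho_3$ (the sum of the run lengths); hence $2\tau=\rho_1+3\rho_3$.

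It then remains to prove that $\rho_1=\ell$, which I would do by constructing a bijection between the runs of length $1$ and the leaves. A run of length $1$ consists of a single tile $t$, and the crucial claim is that $t$ must be a leaf. Here one first rules out the possibility that $t$ lies in two runs of length $1$: the four edges of $t$ split into the two parallel pairs governing its two runs, and $t$ lying in a run of length $1$ for a given pair means that $t$ has no edge-neighbour across either edge of that pair. A tile in two runs of length $1$ would therefore share an edge with no other tile, hence be isolated in the graph $\Gamma(\mathcal{T})$, contradicting the edge-connectedness of $\mathcal{T}$ (Condition~2), since $\tau\ge 3$. Thus the second run through $t$ cannot have length $1$, so by Condition~1 it has length $3$ and $t$ is a leaf. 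Conversely each leaf lies in exactly one run of length $1$, and this correspondence is a bijection, so $\rho_1=\ell$.

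Combining $2\tau=\rho_1+3\rho_3$ with $\rho_1=\ell$ and $\rho=\rho_3$ gives $2\tau=\ell+3\rho$, that is, $\ell=2\tau-3\rho$. Finally $\ell\ge 0$ since it counts tiles, and $\ell\le\tau$ since the leaves form a subset of the set of all tiles, which establishes $0\le\ell\le\tau$. I expect the only genuine obstacle to be the middle step — showing that the single tile of a run of length $1$ is a leaf, which forces one to exclude a tile lying in two runs of length $1$ and hence to invoke connectivity; the remainder is routine incidence bookkeeping.
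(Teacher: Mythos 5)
Your proof is correct and takes essentially the same approach as the paper: both arguments rest on double-counting tile--run incidences, the paper counting only the pairs $(t,r)$ with $r$ a run of length $3$ to obtain $3\rho=2(\tau-\ell)+\ell$ directly from the definition of a leaf. Your variant, which counts all runs to get $2\tau=\rho_1+3\rho_3$ and then shows $\rho_1=\ell$, differs only in bookkeeping --- though your explicit use of edge-connectedness to rule out a tile lying in two runs of length $1$ makes precise a point (that every non-leaf lies in two runs of length $3$) which the paper's one-line proof leaves implicit.
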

\begin{proof}
Since every leaf is a tile, we see that $0\leq\ell\leq \tau$. We count, in two ways, pairs $(t,r)$ where $t$ is a tile in the pattern and $r$ is a run of length $3$ containing $t$. First, since there are $\rho$ runs and each run contains $3$ tiles we see that there are $3\rho$ pairs. Secondly, each of the $\tau-\ell$ non-leaf tiles is contained in $2$ runs of length $3$, and each of the $\ell$ leaves is contained in just one such run. So there are $2(\tau-\ell)+\ell$ tiles. Hence $3\rho=2(\tau-\ell)+\ell$, and so $\ell=2\tau-3\rho$.
\end{proof}

Preece's conjectures~\cite{Preece1} were:
\begin{itemize}
\item[(a)] There does not exist a $12$ tile tredoku pattern with $0$ leaves (and $8$ runs of length $3$).
\item[(b)] There does not exist a $15$ tile tredoku pattern with $9$ leaves (and $7$ runs of length $3$).
\item[(c)] For $\rho\geq 9$, there does not exist a tredoku pattern with $\rho$ runs of length~$3$, $2\rho+1$ tiles and $\rho+2$ leaves.
\end{itemize}
We see that all of these conjectures are corollaries of the following theorem, which characterises the parameters $\tau$, $\rho$ and $\ell$ such that a $\tau$-tile tredoku pattern with $\rho$ runs and $\ell$ leaves exists.

\begin{theorem}
\label{thm:main}
Let $\tau$, $\rho$ and $\ell$ be non-negative integers with $\tau\geq 3$, such that $\ell\leq\tau$ and $\ell=2\tau-3\rho$.
\begin{itemize}
\item[(i)] If $\ell>\lceil\tau/2\rceil+1$, then a $\tau$-tile tredoku pattern with $\rho$ runs of length $3$ and $\ell$ leaves does not exist.
\item[(ii)] Suppose
\[
\begin{split}
(\tau,\rho,\ell)\in\{(3,1,3),(3,2,0),(4,2,2),(5,3,1),(6,4,0),\\
(12,8,0),(15,7,9)\}
\end{split}
\]
or $(\tau,\rho,\ell)$ is of the form $(2\rho+1,\rho,\rho+2)$ with $\rho\geq 9$. Then a $\tau$-tile tredoku pattern with $\rho$ runs of length $3$ and $\ell$ leaves does not exist.
\item[(iii)] If neither of the conditions~(i) and (ii)  hold, then a $\tau$-tile tredoku pattern with $\rho$ runs of length $3$ and $\ell$ leaves exists.
\end{itemize}
\end{theorem}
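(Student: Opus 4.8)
The plan is to reduce the geometry to a combinatorial skeleton, read off parts~(i) and the small cases of~(ii) almost for free, and then concentrate all the real work on two extremal families. Introduce the \emph{run-graph} $G$: its vertices are the runs of length~$3$, and two such runs are joined by an edge precisely when they share a tile. Three observations control $G$. First, the two runs through a single tile use the tile's two \emph{distinct} pairs of parallel edges, so they are transverse; hence two runs meet in at most one tile and $G$ is simple. Second, the direction of a run (perpendicular to its shared edges) is one of three values, and adjacent vertices of $G$ carry different directions, so $G$ is properly $3$-coloured, i.e.\ tripartite. Third, a length-$3$ run has three tiles, each either a leaf (a pendant stub) or shared with one other run (an edge), so $\deg_G(v)=3-\lambda(v)$, where $\lambda(v)$ is the number of leaves in run~$v$; thus $G$ has maximum degree $3$, exactly $\tau-\ell$ edges, and $\sum_v\lambda(v)=\ell$. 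Finally, edge-connectedness of $\mathcal T$ makes $G$ connected.

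Part~(i) is then immediate: a connected graph on $\rho$ vertices has at least $\rho-1$ edges, so $\tau-\ell\ge\rho-1$; substituting $\rho=(2\tau-\ell)/3$ and clearing denominators gives $\ell\le(\tau+3)/2$, which for integer $\ell$ is $\ell\le\lceil\tau/2\rceil+1$. The small triples of~(ii) follow from the same skeleton together with conditions~3 and~4: $(3,1,3)$ is a single straight run whose middle tile is a cut tile (condition~4 fails); $(3,2,0)$ and $(4,2,2)$ would force two runs to share more than one tile, a multi-edge forbidden by simplicity; $(5,3,1)$ would need a simple graph on three vertices with four edges; and $(6,4,0)$ would need a $3$-regular simple graph on four vertices, namely $K_4$, which is not tripartite (equivalently, four pairwise-transverse runs cannot exist among three directions). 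I would record these as short lemmas.

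The crux is the remaining non-existence, which sits on the two boundaries of the feasible region: $G$ a \emph{tree} (the family $(2\rho+1,\rho,\rho+2)$ and $(15,7,9)$, where $\tau-\ell=\rho-1$) and $G$ \emph{cubic} (the case $(12,8,0)$, where $\ell=0$). For these the skeleton is realizable as an abstract graph but, I claim, not as a non-overlapping planar pattern meeting conditions~3 and~4. My approach is geometric: at each non-leaf tile two runs cross, and condition~4 forbids it from being a cut tile, so after its removal the two sides must still touch at a corner — a rigid local constraint at every joint. Combining these local constraints with the restriction to three directions (which forces the structure to curl) and with an Euler/angle bookkeeping for the boundary of the simply connected region $|\mathcal T|$ (for instance $E_{\mathrm{int}}=2\rho$, and total boundary turning $360^\circ$ built from $\pm 60^\circ$ corners) should show that tree-patterns cannot grow indefinitely. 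I expect a uniform contradiction for all trees with $\rho\ge9$, after which $\rho\le8$ is a finite check isolating $\rho\in\{1,7\}$ as sporadic failures; $(12,8,0)$ is then one extra enumeration of cubic tripartite candidates ruled out by the same analysis. \textbf{This is where the main difficulty lies}: finding a single geometric invariant tight enough to kill the whole infinite tree family at once, yet sharp enough to separate the realizable $\rho=8$ tree from the unrealizable $\rho=7$ one.

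For part~(iii) I would give explicit constructions together with a gluing calculus. Starting from a handful of base patterns, I would define local moves — attaching a fresh length-$3$ run at a leaf, or inserting a transverse run at a suitable tile — that shift $(\tau,\rho,\ell)$ in controlled increments while preserving all four conditions (in particular creating no run of length $2$ or $\ge4$, never disconnecting $\mathcal T$, and never opening a hole). Since $\ell=2\tau-3\rho$ leaves only two free parameters, I would verify that these moves sweep out the whole feasible region apart from the excluded points, by exhibiting, for each admissible target, an explicit sequence of moves from a base case. The bookkeeping here is voluminous but routine; the genuine care is in steering the moves away from the forbidden boundary and in checking that every intermediate region stays simply connected and connected-after-deletion.
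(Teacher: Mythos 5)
Your run-graph $G$ is sound, and it handles the easy parts of the theorem correctly --- in places more cleanly than the paper. The paper proves part~(i) by induction on $\tau$ (deleting the two leaves of a two-leaf run from a weak pattern), whereas your edge count $|E(G)|=\tau-\ell\geq\rho-1$ gives $\ell\leq\lceil\tau/2\rceil+1$ in one line; and your simplicity and tripartiteness observations dispose of $(3,1,3)$, $(3,2,0)$, $(4,2,2)$, $(5,3,1)$ and $(6,4,0)$ by essentially the same facts the paper uses in Lemma~\ref{lem:v_small} (there phrased as ``runs in a fixed direction are pairwise disjoint'' and a partition argument rather than as non-tripartiteness of $K_4$). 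Your identification of the remaining cases as the tree case ($\tau-\ell=\rho-1$) and the cubic case ($\ell=0$) is also correct.

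The genuine gap is that everything hard is left as a hope: for $(15,7,9)$, for the infinite family $(2\rho+1,\rho,\rho+2)$ with $\rho\geq 9$, and for $(12,8,0)$ you write ``should show'' and ``I expect'', and you explicitly admit you do not have the invariant your plan requires. Worse, the local constraint you intend to power that analysis is vacuous: by the paper's Theorem~\ref{thm:topology}, for a weak tredoku pattern Condition~4 is \emph{equivalent} to the statement that no leaf lies at the centre of its run; in particular, removing a non-leaf tile never disconnects the region, so there is no ``rigid local constraint at every joint'' to exploit --- Condition~4 only constrains where leaves sit. Moreover, no single numerical or angle-counting invariant can do what you want, because it must pass the realizable trees $\rho\in\{2,\dots,6,8\}$ while failing $\rho=7$ and all $\rho\geq 9$. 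The paper's resolution (Theorem~\ref{thm:verdant}) is instead a structural induction: it defines two operations (the $2$-leaf extension, and the merging of two patterns at a tile that is a leaf of each), proves via Corollary~\ref{cor:tredoku_leaves}, Lemma~\ref{lem:not_touch} and Lemma~\ref{lem:simply_connected} that every verdant pattern with at least $7$ tiles decomposes under one of these operations into smaller verdant patterns (the key counting step being $\rho_2=\rho_0+2$, which forces a $2$-leaf run to meet a $1$-leaf run), and then checks that an explicit finite list of verdant patterns, terminating at $17$ tiles, is closed under both operations; nonexistence for $\rho=7$ and $\rho\geq 9$ falls out of the classification. Separately, $(12,8,0)$ is not ``one extra enumeration ruled out by the same analysis'': it needs its own geometric case analysis on tile orientations (Lemmas~\ref{lem:12_1}--\ref{lem:12_3} and Theorem~\ref{thm:12}), since the abstract cubic tripartite graph certainly exists and only the planar realization fails. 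Your part~(iii) plan coincides with the paper's approach (base patterns plus extension moves), but it, too, is a plan rather than a proof until the explicit patterns are exhibited.
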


Theorem~\ref{thm:main} is visualised by Figure~\ref{fig:graph}, where we plot $\tau$ against $\ell$, and where the shaded squares are parameters where tredoku patterns exist. Here is some intuition for this figure. Theorem~\ref{thm:main}~(i) constrains the possible parameters of a tredoku pattern to the triangular region depicted. The equation $\ell=2\tau-3\rho$ from Lemma~\ref{lem:counting} relates the values of $\ell$ and $\tau$ modulo $3$, and so only one third of the squares in this triangular region are parameter sets that can be achieved. The parameters ruled out by Theorem~\ref{thm:main}~(ii) fall into two classes. First, the squares in the lower left of the region which we might expect to be shaded correspond to parameters that are too small. The fact that $(\tau,\rho,\ell)=(12,8,0)$ cannot be achieved is surprising (most of the content of Section~\ref{sec:do_not_exist} is devoted to proving this), but again can be thought of as due to the small size of the parameters. Secondly, the irregularity observed in the top diagonal of the region, with only finitely many squares shaded, is the phenomenon of small parameters working in the other direction: the combinatorics is extremely tight here, with only a finite number of small tredoku patterns existing. This accounts for the fact that the parameters $(\tau,\rho,\ell)=(15,7,9)$ and the parameters $(2\rho+1,\rho,\rho+2)$ with $\rho\geq 9$ cannot be achieved; the argument in Section~\ref{sec:sporadic} establishes this.

Donald Preece had a proof of Theorem~\ref{thm:main} modulo his conjectures above, so the material in Sections~\ref{sec:constructions},~\ref{sec:big_ell} and~\ref{sec:topology} below is a reconstruction of his work\footnote{After his talk in October~\cite{Preece2}, Prof.\  Preece distributed copies of his notes on tredoku pattens in order to encourage attendees to finish his work. The original version of this paper was submitted with a plea for anyone with these notes to get in touch, and (remarkably!) Martin Ridout contacted me~\cite{Ridout24} to say that he had a folder containing Prof.\  Preece's notes on tredoku patterns, including what is almost certainly his handout. (Martin was responsible for sorting through Prof.\  Preece's notes after he passed away.) My thanks to Martin for passing this folder on to me.}. His conjectures are solved in Sections~\ref{sec:sporadic} and~\ref{sec:do_not_exist}. After the first version of this paper was submitted, Martin Ridout sent me details of a computer program he had written which enumerates all small tredoku patterns, which in particular shows that the first of Preece's conjectures holds and provides an alternative proof of Theorem~\ref{thm:12} below. This now forms part of a preprint~\cite{Ridout25}, with an exposition of more computational results, and an archive of Donald Preece's work including some interesting extensions to `quadridoku' and `quindoku' patterns.

\begin{figure}
\begin{center}
\begin{tikzpicture}[fill=gray!50, scale=0.3,
vertex/.style={circle,inner sep=2,fill=black,draw}]

\draw[step=1.0,black,dotted] (0,0) grid (31,21);
\draw[->] (0,0) -- (31,0);
\node (T) at (32,0){$\tau$};
\draw[->] (0,0) -- (0,21);
\node (L) at (0,22){$\ell$};

\node (A) at (5,18){$\ell>\tau$};
\node (B) at (24,18){$\ell>\lceil\tau/2\rceil+1$};

\foreach \i in {0,...,30}
{
\node (\i) at (\i+0.5,-0.75){\tiny \i};
}
\foreach \i in {0,...,20}
{
\node (\i) at (-0.75,\i+0.5){\tiny \i};
}
\foreach \i in {0,...,19}
{
\draw (\i,\i+1) -- +(1,0) -- +(1,1);
}
\foreach \i in {0,...,12}
{
\draw (5+2*\i,5+\i) -- +(2,0) -- +(2,1);
}
\draw (4,4) -- +(1,0) -- +(1,1);

\gsq{9}{0}
\foreach \i in {0,...,5}
{
\gsq{15+3*\i}{0}
}
\foreach \i in {0,...,7}
{
\gsq{8+3*\i}{1}
}
\foreach \i in {0,...,7}
{
\gsq{7+3*\i}{2}
}
\foreach \i in {1,...,9}
{
\gsq{3+3*\i}{3}
}
\foreach \i in {0,...,8}
{
\gsq{5+3*\i}{4}
}
\foreach \i in {0,...,7}
{
\gsq{7+3*\i}{5}
}
\foreach \i in {0,...,7}
{
\gsq{9+3*\i}{6}
}
\foreach \i in {0,...,6}
{
\gsq{11+3*\i}{7}
}
\foreach \i in {0,...,5}
{
\gsq{13+3*\i}{8}
}
\foreach \i in {1,...,5}
{
\gsq{15+3*\i}{9}
}
\foreach \i in {0,...,4}
{
\gsq{17+3*\i}{10}
}
\foreach \i in {1,...,3}
{
\gsq{19+3*\i}{11}
}
\foreach \i in {1,...,3}
{
\gsq{21+3*\i}{12}
}
\foreach \i in {1,...,2}
{
\gsq{23+3*\i}{13}
}
\foreach \i in {1,...,1}
{
\gsq{25+3*\i}{14}
}
\foreach \i in {1,...,1}
{
\gsq{27+3*\i}{15}
}

\end{tikzpicture}
\end{center}
\caption{Parameters $(\tau,\ell)$ where tredoku patterns exist are drawn as shaded squares. The region $\ell>\tau$ is forbidden since we cannot have more leaves than tiles. The region $\ell>\lceil \tau/2\rceil+1$ is forbidden by Theorem~\ref{thm:main}~(i). We must have $\rho=(2\tau-\ell)/3$, by Lemma~\ref{lem:counting}; in particular $2\tau-\ell$ is a multiple of $3$, which explains why two thirds of the remaining squares are blank. Note the irregularities:  the finite number of possible parameters on the upper diagonal of the remaining squares, and various small parameters that cannot be achieved (most notably the fact that a $12$-tile pattern with no runs does not exist). These are consequences of Theorem~\ref{thm:main}~(ii).}
\label{fig:graph}
\end{figure}

 The remainder of the paper is organised as follows. Theorem~\ref{thm:main}~(iii) is proved in Section~\ref{sec:constructions}, by providing examples of tredoku patterns for all the possible parameters listed there. Section~\ref{sec:big_ell} proves Theorem~\ref{thm:main}~(i). Condition~4 of being a tredoku pattern is not always easy to check when only local information about the pattern is known; Section~\ref{sec:topology} provides an equivalent condition which is easier to check. Section~\ref{sec:sporadic} classifies all tredoku patterns with $\rho$ runs of length $3$, with $2\rho+1$ tiles and $\rho+2$ leaves. In particular, the results in Section~\ref{sec:sporadic} show that $\tau$-tile tredoku patterns with $\rho$ runs and $\ell$ leaves do not exist when $(\tau,\rho,\ell)\in\{(3,1,3),(15,7,9)\}$ or when $(\tau,\rho,\ell)=(2\rho+1,\rho,\rho+2)$ with $\rho\geq 9$. Section~\ref{sec:do_not_exist} completes the proof of Theorem~\ref{thm:main}, by showing that tredoku patterns do not exist in the cases when $(\tau,\rho,\ell)\in\{(2,1,1),(3,2,0),(4,2,2),(5,3,1),(6,4,0),(12,8,0)\}$. Most of these cases are not at all hard, but the case when $(\tau,\rho,\ell)=(12,8,0)$ requires a more detailed argument. Section~\ref{sec:conclusion} is a short conclusion to the paper.
 
This paper is dedicated to the memory of Donald Preece. He was a fount of interesting problems and anecdotes, and a pleasure to know. Rosemary Bailey has written a summary of his work, which can be found on arXiv~\cite{Bailey14}, which I recommend very highly. Finally, I would like to thank Martin Ridout and the reviewers for their kind comments and useful suggestions.

 \section{Examples of tredoku patterns}
 \label{sec:constructions}
 
This section provides examples of tredoku patterns for many combinations of parameters; this establishes Theorem~\ref{thm:main}~(iii). We begin by exhibiting tredoku patterns for all parameters with four or fewer leaves. After dealing with sporadic examples, we then show how the tredoku patterns with four leaves can be extended to patterns with a larger number of leaves, showing that all the parameters claimed by Theorem~\ref{thm:main}~(iii) are possible.
 
First, consider tredoku patterns with $\tau$ tiles and no leaves; these are the parameters in the bottom row of Figure~\ref{fig:graph}. Theorem~\ref{thm:main} asserts that such patterns occur when $\tau=9$, or when $\tau$ is a multiple of $3$ with $\tau\geq 15$.  Examples for $\tau=9$ and $\tau=15$ are drawn in Figure~\ref{fig:small0}, and examples for $\tau=9+3i$ for $i=3,4,5$ are drawn in Figure~\ref{fig:infinite0}. Examples when $\tau=9+3i$ for any larger integer $i$ may be constructed by extending the central repeating section of the pattern. So Theorem~\ref{thm:main}~(iii) follows when $\ell=0$.

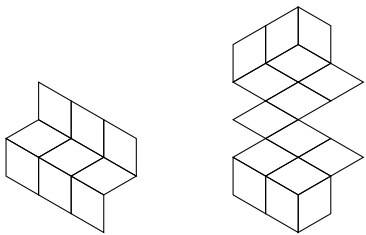
\begin{figure}
\begin{center}
\begin{tikzpicture}[fill=gray!50, scale=0.5,rotate=90,
vertex/.style={circle,inner sep=2,fill=black,draw}]

\dia{-10}{0}
\dia{-10}{-1}
\dia{-10}{-2}
\sla{-9}{-1}
\sla{-9}{-2}
\sla{-9}{-3}
\sla{-11}{0}
\sla{-11}{-1}
\sla{-11}{-2}
\end{tikzpicture}
\hspace{1cm}
\begin{tikzpicture}[fill=gray!50, scale=0.5,rotate=90,
vertex/.style={circle,inner sep=2,fill=black,draw}]
\dia{0}{0}
\dia{0}{-1}
\dia{0}{-2}
\dia{-1}{-1}
\dia{-1}{0}
\sla{-2}{-1}
\sla{-2}{0}
\bsla{-1}{-2}
\dia{1}{-1}
\dia{2}{-2}
\dia{1}{0}
\dia{2}{-1}
\bsla{3}{-1}
\bsla{2}{0}
\sla{3}{-2}
\end{tikzpicture}
\end{center}
\caption{Tredoku patterns with $9$ and $15$ tiles, and no leaves.}
\label{fig:small0}
\end{figure}

\begin{figure}
\begin{center}
\begin{tikzpicture}[fill=gray!50, scale=0.5,rotate=90,
vertex/.style={circle,inner sep=2,fill=black,draw}]

\bsla{0}{2}
\sla{-2}{2}
\bsla{-2}{3}
\sla{-1}{3}
\dia{-2}{4}
\bsla{-1}{4}
\sla{-3}{4}
\dia{0}{0}
\dia{0}{1}
\dia{-1}{1}
\dia{-1}{2}
\bsla{0}{-1}
\sla{1}{-1}
\sla{0}{-2}
\dia{1}{-2}
\bsla{2}{-2}
\sla{2}{-3}
\bsla{1}{-3}
\end{tikzpicture}
\vspace{0.3cm}

\begin{tikzpicture}[fill=gray!50, scale=0.5,rotate=90,
vertex/.style={circle,inner sep=2,fill=black,draw}]

\bsla{4}{4}
\sla{2}{4}
\bsla{2}{5}
\sla{3}{5}
\dia{2}{6}
\bsla{3}{6}
\sla{1}{6}
\dia{5}{0}
\dia{5}{1}
\dia{4}{1}
\dia{4}{2}
\dia{4}{3}
\dia{3}{3}
\dia{3}{4}
\bsla{5}{-1}
\sla{6}{-1}
\sla{5}{-2}
\dia{6}{-2}
\bsla{7}{-2}
\sla{7}{-3}
\bsla{6}{-3}
\end{tikzpicture}
\vspace{0.3cm}

\begin{tikzpicture}[fill=gray!50, scale=0.5,rotate=90,
vertex/.style={circle,inner sep=2,fill=black,draw}]

\bsla{8}{6}
\sla{6}{6}
\bsla{6}{7}
\sla{7}{7}
\dia{6}{8}
\bsla{7}{8}
\sla{5}{8}
\dia{10}{0}
\dia{10}{1}
\dia{9}{1}
\dia{9}{2}
\dia{9}{3}
\dia{8}{3}
\dia{8}{4}
\dia{8}{5}
\dia{7}{5}
\dia{7}{6}
\bsla{10}{-1}
\sla{11}{-1}
\sla{10}{-2}
\dia{11}{-2}
\bsla{12}{-2}
\sla{12}{-3}
\bsla{11}{-3}

\end{tikzpicture}
\end{center}
\caption{Tredoku patterns with $9+3i$ tiles and no leaves, for $i=3,4,5$.}
\label{fig:infinite0}
\end{figure}

We now consider parameters for tredoku patterns with $\tau$ tiles and one leaf; these are the parameters in the penultimate row of Table~\ref{fig:graph}. Theorem~\ref{thm:main}~(iii) asserts that patterns occur whenever $\tau\geq 8$ and $\tau\equiv 2\bmod 3$.
Figure~\ref{fig:small1} exhibits tredoku patterns with one leaf where $\tau\in\{8,11,14,17\}$. The tredoku patterns in Figure~\ref{fig:infinite1}, together with patterns obtained from Figure~\ref{fig:infinite1} by extending the repeating central section (to the left of the leaf) give examples of one-leaf tredoku patterns with $\tau=8+3i$ for $i\geq 4$. So Theorem~\ref{thm:main}~(iii) holds when $\ell=1$.

\begin{figure}
\begin{center}
\begin{tikzpicture}[fill=gray!50, scale=0.5,rotate=90,
vertex/.style={circle,inner sep=2,fill=black,draw}]

\bsla{0}{0}
\sla{1}{0}
\dia{0}{1}
\bsla{1}{1}
\sla{-1}{1}
\sla{-1}{2}
\sla{0}{2}
\sla{-2}{2}
\end{tikzpicture}\hspace{0.3cm}
\begin{tikzpicture}[fill=gray!50, scale=0.5,rotate=90,
vertex/.style={circle,inner sep=2,fill=black,draw}]

\dia{6}{1}
\dia{6}{2}
\dia{6}{3}
\dia{5}{2}
\dia{5}{3}
\dia{5}{4}
\sla{4}{2}
\bsla{4}{3}
\dia{4}{4}
\dia{4}{5}
\bsla{3}{4}
\end{tikzpicture}\hspace{0.3cm}
\begin{tikzpicture}[fill=gray!50, scale=0.5,rotate=90,
vertex/.style={circle,inner sep=2,fill=black,draw}]

\dia{11}{1}
\bsla{11}{0}
\sla{12}{0}
\sla{10}{1}
\bsla{12}{1}
\bsla{10}{2}
\sla{11}{2}
\sla{12}{2}
\sla{13}{2}
\sla{13}{3}
\bsla{12}{3}
\dia{12}{4}
\bsla{13}{4}
\sla{11}{4}
\end{tikzpicture}
\begin{tikzpicture}[fill=gray!50, scale=0.5,rotate=90,
vertex/.style={circle,inner sep=2,fill=black,draw}]

\sla{17}{0}
\dia{19}{0}
\sla{18}{0}
\sla{18}{1}
\sla{19}{1}
\sla{18}{2}
\sla{19}{2}
\sla{17}{1}
\sla{17}{2}
\sla{19}{3}
\sla{20}{3}
\sla{21}{3}
\sla{21}{4}
\bsla{20}{4}
\dia{20}{5}
\bsla{21}{5}
\sla{19}{5}

\end{tikzpicture}
\end{center}
\caption{Tredoku patterns with $8$, $11$, $14$ and $17$ tiles, and one leaf.}
\label{fig:small1}
\end{figure}
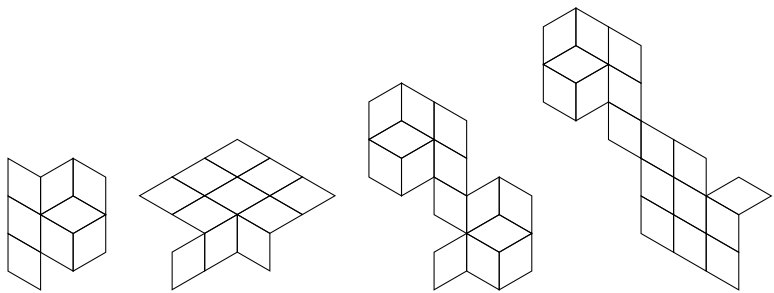

\begin{figure}
\begin{center}
\begin{tikzpicture}[fill=gray!50, scale=0.5,rotate=90,
vertex/.style={circle,inner sep=2,fill=black,draw}]

\bsla{0}{2}
\sla{-2}{2}
\bsla{-2}{3}
\sla{-1}{3}
\dia{-2}{4}
\bsla{-1}{4}
\sla{-3}{4}
\dia{0}{0}
\dia{0}{1}
\dia{-1}{1}
\dia{-1}{2}
\bsla{0}{-1}
\dia{1}{-1}
\dia{1}{-2}
\dia{1}{-3}
\dia{2}{-4}
\bsla{3}{-4}
\sla{3}{-5}
\dia{3}{-3}
\sla{2}{-3}
\end{tikzpicture}

\begin{tikzpicture}[fill=gray!50, scale=0.5,rotate=90,
vertex/.style={circle,inner sep=2,fill=black,draw}]

\bsla{4}{4}
\sla{2}{4}
\bsla{2}{5}
\sla{3}{5}
\dia{2}{6}
\bsla{3}{6}
\sla{1}{6}
\dia{5}{0}
\dia{5}{1}
\dia{4}{1}
\dia{4}{2}
\dia{4}{3}
\dia{3}{3}
\dia{3}{4}
\bsla{5}{-1}
\dia{6}{-1}
\dia{6}{-2}
\dia{6}{-3}
\dia{7}{-4}
\bsla{8}{-4}
\sla{8}{-5}
\dia{8}{-3}
\sla{7}{-3}
\end{tikzpicture}

\begin{tikzpicture}[fill=gray!50, scale=0.5,rotate=90,
vertex/.style={circle,inner sep=2,fill=black,draw}]

\bsla{8}{6}
\sla{6}{6}
\bsla{6}{7}
\sla{7}{7}
\dia{6}{8}
\bsla{7}{8}
\sla{5}{8}
\dia{10}{0}
\dia{10}{1}
\dia{9}{1}
\dia{9}{2}
\dia{9}{3}
\dia{8}{3}
\dia{8}{4}
\dia{8}{5}
\dia{7}{5}
\dia{7}{6}
\bsla{10}{-1}
\dia{11}{-1}
\dia{11}{-2}
\dia{11}{-3}
\dia{12}{-4}
\bsla{13}{-4}
\sla{13}{-5}
\dia{13}{-3}
\sla{12}{-3}

\end{tikzpicture}
\end{center}
\caption{Tredoku patterns with $8+3i$ tiles and one leaf, for $i=4,5,6$.}
\label{fig:infinite1}
\end{figure}

Similarly, Figure~\ref{fig:small2} gives two-leaf tredoku patterns for $\tau\in\{7,10\}$, and Figure~\ref{fig:infinite2} shows how to construct two-leaf patterns for $\tau=7+3i$ with $i\geq 2$. This shows that Theorem~\ref{thm:main}~(iii) holds when $\ell=2$.

\begin{figure}
\begin{center}
\begin{tikzpicture}[fill=gray!50, scale=0.5,rotate=90,
vertex/.style={circle,inner sep=2,fill=black,draw}]


\bsla{-4}{-1}
\sla{-6}{-1}
\bsla{-6}{0}
\sla{-5}{0}
\dia{-6}{1}
\bsla{-5}{1}
\sla{-7}{1}
\end{tikzpicture}\hspace{0.3cm}
\begin{tikzpicture}[fill=gray!50, scale=0.5,rotate=90,
vertex/.style={circle,inner sep=2,fill=black,draw}]

\dia{0}{0}
\dia{0}{1}
\dia{0}{2}
\dia{-1}{1}
\dia{-1}{2}
\dia{-1}{3}
\dia{1}{-1}
\bsla{-2}{2}
\dia{-2}{3}
\dia{-2}{4}
\end{tikzpicture}
\end{center}
\caption{Tredoku patterns with $7$ and $10$ tiles, and two leaves.}
\label{fig:small2}
\end{figure}
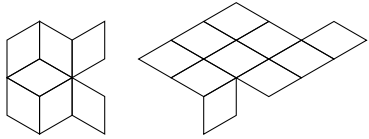

\begin{figure}
\begin{center}
\begin{tikzpicture}[fill=gray!50, scale=0.5,rotate=90,
vertex/.style={circle,inner sep=2,fill=black,draw}]

\bsla{0}{2}
\sla{-2}{2}
\bsla{-2}{3}
\sla{-1}{3}
\dia{-2}{4}
\bsla{-1}{4}
\sla{-3}{4}
\dia{0}{0}
\dia{0}{1}
\dia{-1}{1}
\dia{-1}{2}
\bsla{0}{-1}
\sla{1}{-1}

\end{tikzpicture}
\vspace{0.3cm}

\begin{tikzpicture}[fill=gray!50, scale=0.5,rotate=90,
vertex/.style={circle,inner sep=2,fill=black,draw}]

\bsla{4}{4}
\sla{2}{4}
\bsla{2}{5}
\sla{3}{5}
\dia{2}{6}
\bsla{3}{6}
\sla{1}{6}
\dia{5}{0}
\dia{5}{1}
\dia{4}{1}
\dia{4}{2}
\dia{4}{3}
\dia{3}{3}
\dia{3}{4}
\bsla{5}{-1}
\sla{6}{-1}

\end{tikzpicture}\vspace{0.3cm}

\begin{tikzpicture}[fill=gray!50, scale=0.5,rotate=90,
vertex/.style={circle,inner sep=2,fill=black,draw}]
\bsla{8}{6}
\sla{6}{6}
\bsla{6}{7}
\sla{7}{7}
\dia{6}{8}
\bsla{7}{8}
\sla{5}{8}
\dia{10}{0}
\dia{10}{1}
\dia{9}{1}
\dia{9}{2}
\dia{9}{3}
\dia{8}{3}
\dia{8}{4}
\dia{8}{5}
\dia{7}{5}
\dia{7}{6}
\bsla{10}{-1}
\sla{11}{-1}

\end{tikzpicture}
\end{center}
\caption{Tredoku patterns with $7+3i$ tiles and two leaves, for $i=2,3,4$.}
\label{fig:infinite2}
\end{figure}

Figures~\ref{fig:small3} and~\ref{fig:infinite3} give examples for all three-leaf parameters, and Figures~\ref{fig:small4} and~\ref{fig:infinite4} give examples for all four-leaf parameters claimed by Theorem~\ref{thm:main}~(iii). So Theorem~\ref{thm:main}~(iii) holds when $\ell=3$ or $\ell=4$.

\begin{figure}
\begin{center}
\begin{tikzpicture}[fill=gray!50, scale=0.5,rotate=90,
vertex/.style={circle,inner sep=2,fill=black,draw}]

\sla{-1}{0}
\sla{-1}{1}
\dia{-2}{2}
\dia{-3}{3}
\bsla{-1}{2}
\bsla{0}{2}

\end{tikzpicture}\hspace{0.3cm}
\begin{tikzpicture}[fill=gray!50, scale=0.5,rotate=90,
vertex/.style={circle,inner sep=2,fill=black,draw}]

\bsla{6}{0}
\bsla{7}{0}
\dia{5}{0}
\sla{3}{3}
\bsla{4}{1}
\sla{5}{1}
\dia{4}{2}
\bsla{5}{2}
\sla{3}{2}

\end{tikzpicture}\hspace{0.3cm}
\begin{tikzpicture}[fill=gray!50, scale=0.5,rotate=90,
vertex/.style={circle,inner sep=2,fill=black,draw}]

\dia{12}{0}
\dia{12}{1}
\dia{12}{2}
\dia{11}{1}
\dia{11}{2}
\dia{11}{3}
\dia{13}{-1}
\bsla{14}{-1}
\dia{13}{-2}
\bsla{10}{2}
\dia{10}{3}
\dia{10}{4}

\end{tikzpicture}
\end{center}
\caption{Tredoku patterns with $6$, $9$ and $12$ tiles, and three leaves.}
\label{fig:small3}
\end{figure}
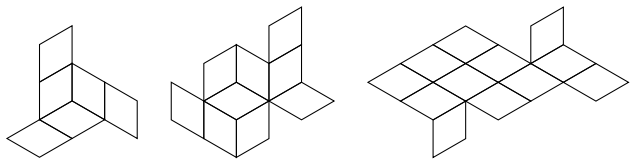

\begin{figure}
\begin{center}
\begin{tikzpicture}[fill=gray!50, scale=0.5,rotate=90,
vertex/.style={circle,inner sep=2,fill=black,draw}]

\bsla{0}{2}
\sla{-2}{2}
\dia{0}{0}
\dia{0}{1}
\dia{-1}{1}
\dia{-1}{2}
\bsla{0}{-1}
\dia{1}{-1}
\dia{1}{-2}
\dia{1}{-3}
\dia{2}{-4}
\bsla{3}{-4}
\sla{3}{-5}
\dia{3}{-3}
\sla{2}{-3}

\end{tikzpicture}\vspace{0.3cm}

\begin{tikzpicture}[fill=gray!50, scale=0.5,rotate=90,
vertex/.style={circle,inner sep=2,fill=black,draw}]

\bsla{4}{4}
\sla{2}{4}
\dia{5}{0}
\dia{5}{1}
\dia{4}{1}
\dia{4}{2}
\dia{4}{3}
\dia{3}{3}
\dia{3}{4}
\bsla{5}{-1}
\dia{6}{-1}
\dia{6}{-2}
\dia{6}{-3}
\dia{7}{-4}
\bsla{8}{-4}
\sla{8}{-5}
\dia{8}{-3}
\sla{7}{-3}

\end{tikzpicture}\vspace{0.3cm}

\begin{tikzpicture}[fill=gray!50, scale=0.5,rotate=90,
vertex/.style={circle,inner sep=2,fill=black,draw}]

\bsla{8}{6}
\sla{6}{6}
\dia{10}{0}
\dia{10}{1}
\dia{9}{1}
\dia{9}{2}
\dia{9}{3}
\dia{8}{3}
\dia{8}{4}
\dia{8}{5}
\dia{7}{5}
\dia{7}{6}
\bsla{10}{-1}
\dia{11}{-1}
\dia{11}{-2}
\dia{11}{-3}
\dia{12}{-4}
\bsla{13}{-4}
\sla{13}{-5}
\dia{13}{-3}
\sla{12}{-3}

\end{tikzpicture}
\end{center}
\caption{Tredoku patterns with $6+3i$ tiles and three leaves, for $i=3,4,5$.}
\label{fig:infinite3}
\end{figure}

\begin{figure}
\begin{center}
\begin{tikzpicture}[fill=gray!50, scale=0.5,rotate=90,
vertex/.style={circle,inner sep=2,fill=black,draw}]

\dia{0}{0}
\dia{0}{1}
\dia{0}{-1}
\dia{1}{-1}
\dia{-1}{1}

\end{tikzpicture}
\hspace{0.3cm} \begin{tikzpicture}[fill=gray!50, scale=0.5,rotate=90,
vertex/.style={circle,inner sep=2,fill=black,draw}]

\dia{-1}{3}
\sla{-2}{2}
\bsla{1}{1}
\dia{0}{0}
\dia{0}{1}
\dia{-1}{1}
\dia{-1}{2}
\gdia{1}{-1}

\end{tikzpicture}
\end{center}
\caption{Tredoku patterns with $5$ tiles and $8$ tiles and four leaves. One leaf is shaded in the $8$ tile pattern.}
\label{fig:small4}
\end{figure}
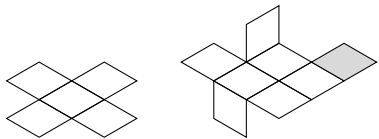

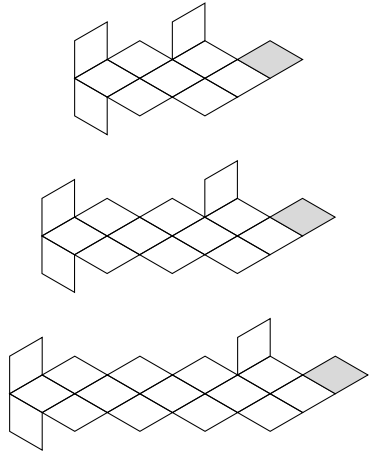
\begin{figure}
\begin{center}
\begin{tikzpicture}[fill=gray!50, scale=0.5,rotate=90,
vertex/.style={circle,inner sep=2,fill=black,draw}]

\bsla{4}{4}
\sla{2}{4}
\dia{5}{0}
\dia{5}{1}
\dia{4}{1}
\dia{4}{2}
\dia{4}{3}
\dia{3}{3}
\dia{3}{4}
\bsla{6}{1}
\gdia{6}{-1}
\end{tikzpicture}\vspace{0.3cm}

\begin{tikzpicture}[fill=gray!50, scale=0.5,rotate=90,
vertex/.style={circle,inner sep=2,fill=black,draw}]

\bsla{8}{6}
\sla{6}{6}
\dia{10}{0}
\dia{10}{1}
\dia{9}{1}
\dia{9}{2}
\dia{9}{3}
\dia{8}{3}
\dia{8}{4}
\dia{8}{5}
\dia{7}{5}
\dia{7}{6}
\bsla{11}{1}
\gdia{11}{-1}

\end{tikzpicture}\vspace{0.3cm}

\begin{tikzpicture}[fill=gray!50, scale=0.5,rotate=90,
vertex/.style={circle,inner sep=2,fill=black,draw}]

\bsla{7}{8}
\sla{5}{8}
\dia{10}{0}
\dia{10}{1}
\dia{9}{1}
\dia{9}{2}
\dia{9}{3}
\dia{8}{3}
\dia{8}{4}
\dia{8}{5}
\dia{7}{5}
\dia{7}{6}
\dia{7}{7}
\dia{6}{7}
\dia{6}{8}
\bsla{11}{1}
\gdia{11}{-1}
\end{tikzpicture}

\end{center}
\caption{Tredoku patterns with $5+3i$ tiles and four leaves, for $i=2,3,4$, with one leaf shaded.}
\label{fig:infinite4}
\end{figure}

We have now established that tredoku patterns exist for all the parameters claimed by Theorem~\ref{thm:main}~(iii) with $\ell$ leaves when $\ell\leq 4$. These parameters are the bottom five rows in Figure~\ref{fig:graph}.

There are five `sporadic' sets of parameters that lie on the upper diagonal of Figure~\ref{thm:main}~(iii), namely the parameters $\tau=5+2j$ and $\ell=4+j$ where $j\in\{1,2,3,4,6\}$. Tredoku pattens with these sets of parameters are exhibited in Figure~\ref{fig:sporadic}.

\begin{figure}
\begin{center}
\begin{tikzpicture}[fill=gray!50, scale=0.5,rotate=90,
vertex/.style={circle,inner sep=2,fill=black,draw}]


\dia{0}{0}
\dia{0}{1}
\dia{0}{2}
\dia{1}{-1}
\sla{-1}{0}
\dia{-1}{2}
\sla{1}{0}

\end{tikzpicture}\vspace{0.3cm}

\begin{tikzpicture}[fill=gray!50, scale=0.5,rotate=90,
vertex/.style={circle,inner sep=2,fill=black,draw}]


\dia{0}{0}
\dia{0}{1}
\dia{1}{0}
\bsla{1}{1}
\sla{-1}{1}
\bsla{-1}{2}
\dia{-1}{3}
\bsla{-2}{2}
\bsla{-2}{3}

\end{tikzpicture}\vspace{0.3cm}

\begin{tikzpicture}[fill=gray!50, scale=0.5,rotate=90,
vertex/.style={circle,inner sep=2,fill=black,draw}]


\dia{0}{0}
\dia{0}{1}
\dia{0}{2}
\dia{-1}{3}
\dia{-1}{4}
\dia{-1}{5}
\sla{1}{0}
\sla{-1}{1}
\sla{-2}{3}
\dia{-2}{5}
\sla{0}{3}

\end{tikzpicture}\vspace{0.3cm}

\begin{tikzpicture}[fill=gray!50, scale=0.5,rotate=90,
vertex/.style={circle,inner sep=2,fill=black,draw}]


\dia{0}{0}
\dia{-1}{1}
\dia{-2}{2}
\dia{-2}{3}
\dia{-3}{4}
\dia{-3}{5}
\dia{-4}{6}
\dia{-3}{6}
\sla{-2}{4}
\sla{-4}{4}
\bsla{-2}{1}
\dia{-1}{0}
\bsla{0}{1}

\end{tikzpicture}\vspace{0.3cm}
\begin{tikzpicture}[fill=gray!50, scale=0.5,rotate=90,
vertex/.style={circle,inner sep=2,fill=black,draw}]


\dia{0}{0}
\dia{0}{1}
\dia{0}{2}
\dia{1}{-1}
\sla{-1}{0}
\dia{-1}{2}
\sla{1}{0}
\dia{1}{-2}
\dia{1}{-3}
\sla{0}{-2}
\sla{2}{-3}
\dia{2}{-4}
\dia{2}{-5}
\dia{2}{-6}
\sla{1}{-5}
\dia{3}{-6}
\sla{3}{-5}
\end{tikzpicture}
\end{center}
\caption{`Sporadic' tredoku patterns with $\tau$ tiles and $\ell$ leaves, with $\tau=5+2j$ and $\ell=4+j$ for $j\in\{1,2,3,4,6\}$. In terms of runs, these patterns can be characterised as having $\rho$ runs of length $3$, $2\rho+1$ tiles and $\rho+2$ leaves, for $\rho=3,4,5,6$ and $8$.}
\label{fig:sporadic}
\end{figure}

To finish the proof of Theorem~\ref{thm:main}~(iii), we need to construct tredoku patterns with $\tau$ tiles and $\ell$ leaves, where $\ell= 4+j$ and $\tau=5+3i+2j$ for integers $i\geq 1$ and $j\geq 1$. (For a fixed value of $i$, we can visualise this as moving diagonally in Figure~\ref{fig:graph}, starting with a $4$-leaf parameter set with $\tau=5+3i>5$ and adding $2$ tiles and $1$ leaf as we increment $j$.) To construct such a tredoku pattern, we start with a four-leaf tredoku pattern with $5+3i$ tiles, and extend outwards from the shaded tile depicted in Figure~\ref{fig:small4} or~\ref{fig:infinite4} by adding $2j$ tiles as described in Figure~\ref{fig:extension}. The result is a pattern with $4+3i+2j$ tiles and $4+j$ leaves. (This process does not work for $\tau=5$, as none of the leaves in our example have the correct form. This is why there is no shaded tile in the first tredoku pattern in Figure~\ref{fig:small4}.)

We have now covered all parameters required by Theorem~\ref{thm:main}~(iii), and so Theorem~\ref{thm:main}~(iii) holds.

\begin{figure}
\begin{center}
\begin{tikzpicture}[fill=gray!50, scale=0.5,rotate=90,
vertex/.style={circle,inner sep=2,fill=black,draw}]

\gdia{0}{0}
\bsla{1}{0}
\dia{0}{-1}
\draw[dashed] (-1.5,0.866) -- (-0.5,0.866)-- (0,0); 
\draw[dashed] (0.5,0.866) -- (1,2*0.866) -- (2,2*0.866);

\end{tikzpicture}
\hspace{0.3cm}
\begin{tikzpicture}[fill=gray!50, scale=0.5,rotate=90,
vertex/.style={circle,inner sep=2,fill=black,draw}]

\gdia{0}{0}
\bsla{1}{0}
\dia{0}{-1}
\dia{1}{-2}
\sla{-1}{-1}

\draw[dashed] (-1.5,0.866) -- (-0.5,0.866)-- (0,0); 
\draw[dashed] (0.5,0.866) -- (1,2*0.866) -- (2,2*0.866);

\end{tikzpicture}
\hspace{0.3cm}
\begin{tikzpicture}[fill=gray!50, scale=0.5,rotate=90,
vertex/.style={circle,inner sep=2,fill=black,draw}]

\gdia{0}{0}
\bsla{1}{0}
\dia{0}{-1}
\dia{1}{-2}
\sla{-1}{-1}
\dia{1}{-3}
\bsla{2}{-2}

\draw[dashed] (-1.5,0.866) -- (-0.5,0.866)-- (0,0); 
\draw[dashed] (0.5,0.866) -- (1,2*0.866) -- (2,2*0.866);

\end{tikzpicture}
\hspace{0.3cm}
\begin{tikzpicture}[fill=gray!50, scale=0.5,rotate=90,
vertex/.style={circle,inner sep=2,fill=black,draw}]

\gdia{0}{0}
\bsla{1}{0}
\dia{0}{-1}
\dia{1}{-2}
\sla{-1}{-1}
\dia{1}{-3}
\bsla{2}{-2}
\dia{2}{-4}
\sla{0}{-3}

\draw[dashed] (-1.5,0.866) -- (-0.5,0.866)-- (0,0); 
\draw[dashed] (0.5,0.866) -- (1,2*0.866) -- (2,2*0.866);

\end{tikzpicture}
\end{center}
\caption{Adding $2j$ tiles and $j$ runs to a four-leaf tredoku pattern, for $j=1,2,3,4.$ The four-leaf pattern contains the shaded leaf, and lies to the left of the dashed line.}
\label{fig:extension}
\end{figure}

\section{Tredoku patterns with many leaves}
\label{sec:big_ell}

In this section, we aim to prove Theorem~\ref{thm:main}~(i). To this end, we define a weak tredoku pattern to be  a pattern of $3$ or more tiles that satisfies Conditions 1~to~3 in the definition of a tredoku pattern, but does not necessarily satisfy Condition~4 (that the pattern remains connected if any single tile is removed). In more detail:
\begin{definition}
A \emph{weak tredoku pattern} $\mathcal{T}$ is a pattern of $3$ or more diamond-shaped tiles that satisfies the following four conditions:
\begin{enumerate}
\item If there exists a run of length $\ell$ in $\mathcal{T}$, then $\ell\in\{1,3\}$.
\item The pattern is edge-connected. More precisely: Define a graph $\Gamma(\mathcal{T})$ whose vertices correspond to the tiles in $\mathcal{T}$, with vertices being adjacent if and only if the corresponding tiles share an edge. Then $\Gamma(\mathcal{T})$ is connected.
\item The pattern has no `holes'. More precisely: Define $|\mathcal{T}|$ to be the region of the plane covered by the tiles in $\mathcal{T}$. Then $|\mathcal{T}|$ is simply connected. 
\end{enumerate}

\end{definition}
We note that the argument of Lemma~\ref{lem:counting} does not require Condition~4, so a weak tredoku pattern with $\tau$ tiles, $\rho$ runs of length $3$ and $\ell$ leaves must satisfy the equality $\ell=2\tau-3\rho$. It is not hard to see that there are no weak tredoku patterns with $2$ tiles, and that a weak tredoku pattern with $3$ tiles consists of a single run of three tiles (all of which are leaves). 

\begin{lemma}
\label{lem:weak}
A weak tredoku pattern with $\tau$ tiles $\rho$ runs and $\ell$ leaves does not exist when $\ell>\lceil \tau/2\rceil+1$.
\end{lemma}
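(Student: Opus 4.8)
The plan is to encode the large-scale structure of the pattern in an auxiliary graph $H$ built on its length-$3$ runs, and then to read off the inequality from the single fact that $H$ is connected. Recall that every tile lies in exactly two runs, one for each of the two edge-directions it uses. First I would observe that, since $\mathcal{T}$ is edge-connected and has at least three tiles, no tile can have \emph{both} its runs of length $1$: such a tile would have no edge-neighbour and so be isolated in $\Gamma(\mathcal{T})$. Hence, by Condition~1, every tile lies in either one length-$3$ run (a leaf) or two length-$3$ runs (call such tiles \emph{internal}), so there are exactly $\ell$ leaves and $\tau-\ell$ internal tiles.

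Next I would define $H$ to have the $\rho$ length-$3$ runs as its vertices, joining two runs by an edge precisely when some tile lies in both. A tile lying in two length-$3$ runs is exactly an internal tile, and its two runs point in distinct edge-directions, so the two runs are genuinely distinct and each internal tile yields a genuine edge. Moreover two runs in distinct directions meet in at most one tile, and two distinct parallel runs (being maximal) meet in none, so distinct internal tiles give distinct edges. Thus $H$ is a simple graph with $|V(H)|=\rho$ and $|E(H)|=\tau-\ell$.

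The crux is to show that $H$ is connected, and here I would use the key local observation that whenever two tiles share an edge, that edge lies in some direction $d$, so the maximal $d$-run through them has length at least $2$ and hence, by Condition~1, equals $3$. In other words, every edge of $\Gamma(\mathcal{T})$ joins two tiles lying in a common length-$3$ run. Consequently a path in $\Gamma(\mathcal{T})$ between tiles of two given runs can be traced through a sequence of length-$3$ runs in which consecutive runs share a tile, and each such shared tile is an edge of $H$; since $\Gamma(\mathcal{T})$ is connected by Condition~2, it follows that $H$ is connected.

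Finally I would combine connectivity with the counting lemma. A connected graph on $\rho$ vertices has at least $\rho-1$ edges, so $\tau-\ell\ge\rho-1$. Substituting $\rho=(2\tau-\ell)/3$ from Lemma~\ref{lem:counting} and clearing denominators gives $\tau-2\ell+3\ge 0$, that is $2\ell\le\tau+3$; since $\ell$ is an integer, $\ell\le\lfloor(\tau+3)/2\rfloor=\lceil\tau/2\rceil+1$, contradicting the hypothesis $\ell>\lceil\tau/2\rceil+1$. I expect the only delicate step to be the connectivity of $H$: one must argue cleanly that edge-adjacency of tiles always takes place inside a single length-$3$ run, so that run-to-run connectivity inherits connectivity from $\Gamma(\mathcal{T})$. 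Everything after that is bookkeeping with the identity $\ell=2\tau-3\rho$, and I note that the argument uses only Conditions~1 and~2, not Condition~3, which is consistent with the statement being about weak tredoku patterns.
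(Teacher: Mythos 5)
Your proof is correct, and it takes a genuinely different route from the paper's. The paper argues by minimal counterexample: no run contains three leaves (by edge-connectivity, since $\tau\geq 4$); a run containing two leaves could have those leaves deleted, leaving a smaller weak tredoku pattern with $\tau'=\tau-2$ and $\ell'=\ell-1$ that still violates the bound, contradicting minimality; and once every run contains at most one leaf, $\ell\leq\rho$ combines with $\ell=2\tau-3\rho$ to give $\ell\leq\lceil\tau/2\rceil$, a contradiction. Your argument replaces this induction by a single global count on the run-intersection graph $H$: connectivity of $H$ is inherited from Condition~2 via your key observation that edge-adjacent tiles always lie in a common run of length $3$ (a maximal run of length at least $2$ must have length $3$ by Condition~1), and then the spanning-tree bound $\tau-\ell\geq\rho-1$ together with Lemma~\ref{lem:counting} gives $2\ell\leq\tau+3$, which for integers is exactly $\ell\leq\lceil\tau/2\rceil+1$. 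Comparing the two: your route avoids the step the paper treats as obvious, namely that deleting the two leaves of a run really does leave a weak tredoku pattern (preservation of Conditions~2 and~3, and the degenerate two-tile outcome when $\tau=4$); on the other hand, your simplicity claim for $H$ rests on the fact that distinct runs share at most one tile, which the paper only records later as Lemma~\ref{lem:run} and calls easy --- though you never actually need simplicity, only the inequality $|E(H)|\leq\tau-\ell$, which already follows from each tile lying in exactly two maximal runs. Your method also yields extra structural information the paper's induction does not: in the equality case $\ell=\lceil\tau/2\rceil+1$ one is forced to have $\tau$ odd and $\tau-\ell=\rho-1$, so $H$ must be a tree, which neatly foreshadows the merging structure used to classify verdant patterns in Theorem~\ref{thm:verdant}.
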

This is enough to establish Theorem~\ref{thm:main}~(i), because a tredoku pattern is a weak tredoku pattern.
\begin{proof}
The statement is true when $\tau=2$, as there are no weak tredoku patterns with $2$ tiles. When $\tau=3$, a weak tredoku pattern has $\rho=1$ and $\ell=3$. Since $\ell\leq \lceil \tau/2\rceil+1$ in this case, the statement is also true when $\tau=3$.

Suppose, for a contradiction, that there exists a weak tredoku pattern with $\tau$ tiles, $\rho$ runs and $\ell$ leaves, with $\ell>\lceil \tau/2\rceil+1$. So $\tau\geq 4$ and, without loss of generality, we may assume that $\tau$ is as small as possible. A run of length $3$ cannot contain $3$ leaves, as the pattern is (edge-)connected and contains at least four tiles. Suppose the pattern has a run containing $2$ leaves. Removing these leaves, we produce a weak tredoku pattern with $\tau'$ tiles and $\ell'$ leaves, where $\tau'=\tau-2$  and (because the non-leaf in our run now becomes a leaf) $\ell'=\ell-1$. But $\tau'<\tau$, and
\[
\lceil \tau'/2\rceil+1=\lceil \tau/2\rceil>\ell-1=\ell'.
\]
By our choice of $\tau$, we have a contradiction. So all of the $\rho$ runs of length $3$ contain at most one leaf, and thus $\ell\leq \rho$. Since Lemma~\ref{lem:counting} also holds for weak tredoku patterns, we see that $2\tau=\ell+3\rho\geq 4\ell$ and so $\lceil \tau/2\rceil\geq \ell$. This contradicts the inequality $\ell>\lceil \tau/2\rceil+1$, and the lemma follows.
\end{proof}

Note that the proof above does not work if we replace `weak tredoku pattern' by `tredoku pattern' throughout. For removing two leaves in a run can produce a pattern that violates Condition~4 of the tredoku pattern definition: see, for example, the last tredoku pattern in Figure~\ref{fig:sporadic}. 

\section{Some consequences of the connectivity conditions}
\label{sec:topology}

We defined a weak tredoku pattern in Section~\ref{sec:big_ell}. The aim of this section is to show (Theorem~\ref{thm:topology}) that it is easy to check whether a weak tredoku pattern is in fact a tredoku pattern, by checking the position of leaves within a run. Before this, we show that a weak tredoku pattern does not touch at any corner $p$, in the following sense:

\begin{definition}
Suppose we have pattern $\mathcal{T}$. Let $p$ be a corner of a tile that lies on the boundary of $\mathcal{T}$. The \emph{fan of $\mathcal{T}$ at $p$} is the graph $G_p$ whose vertices are the tiles in $\mathcal{T}$ with corner $p$, and two tiles are adjacent if they share a common edge. We say that the tiling is \emph{singular at $p$} if the graph $G_p$ is not connected.
\end{definition}
Figure~\ref{fig:touch} illustrates some of the situations that might arise. In the first three depicted cases, the tiling is not singular at $p$ since $G_p$ is a connected graph (a single point, a path of length $2$ and a path of length $3$ respectively). In the last three depicted cases, the tiling is singular at $p$, as $G_p$ has two or three connected components.

\begin{figure}
\begin{center}
\begin{tikzpicture}[fill=gray!50, scale=0.5,rotate=90,
vertex/.style={circle,inner sep=0,fill=black,draw}]

\dia{0}{0}
\node (L1) at (-0.5,0){};
\fill[fill=black] (1,0) circle [radius=0.1];
\end{tikzpicture}
\hspace{0.1cm}
\begin{tikzpicture}[fill=gray!50, scale=0.5,rotate=90,
vertex/.style={circle,inner sep=0,fill=black,draw}]

\dia{0}{0}
\node (L1) at (-0.5,0){};
\dia{0}{1}
\fill[fill=black] (1,0) circle [radius=0.1];
\end{tikzpicture}
\hspace{0.1cm}
\begin{tikzpicture}[fill=gray!50, scale=0.5,rotate=90,
vertex/.style={circle,inner sep=0,fill=black,draw}]

\dia{0}{0}
\node (L1) at (-0.5,0){};
\bsla{1}{0}
\bsla{2}{-1}
\fill[fill=black] (1,0) circle [radius=0.1];
\end{tikzpicture}
\hspace{0.8cm}
\begin{tikzpicture}[fill=gray!50, scale=0.5,rotate=90,
vertex/.style={circle,inner sep=0,fill=black,draw}]

\dia{0}{0}
\node (L1) at (-0.5,0){};
\dia{1}{0}
\fill[fill=black] (1,0) circle [radius=0.1];
\end{tikzpicture}
\hspace{0.1cm}
\begin{tikzpicture}[fill=gray!50, scale=0.5,rotate=90,
vertex/.style={circle,inner sep=0,fill=black,draw}]

\sla{0}{-1}
\node (L1) at (-0.5,0){};
\sla{1}{0}
\bsla{0}{0}
\fill[fill=black] (1,0) circle [radius=0.1];
\end{tikzpicture}
\hspace{0.1cm}
\begin{tikzpicture}[fill=gray!50, scale=0.5,rotate=90,
vertex/.style={circle,inner sep=0,fill=black,draw}]

\dia{1}{-1}
\node (L1) at (-0.5,0){};
\sla{1}{0}
\bsla{0}{0}
\fill[fill=black] (1,0) circle [radius=0.1];
\end{tikzpicture}

\end{center}
\caption{A (highlighted) point $p$, with only those tiles from a tiling $\mathcal{T}$ that have a corner at $p$ drawn.}
\label{fig:touch}
\end{figure}

\begin{definition}
A pattern $\mathcal{T}$ is \emph{nonsingular} if, for all corners $p$ of tiles in its boundary, $\mathcal{T}$ is not singular at $p$.
\end{definition}

\begin{lemma}
\label{lem:not_touch}
A weak tredoku pattern $\mathcal{T}$ is nonsingular.
\end{lemma}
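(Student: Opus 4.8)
The plan is to suppose, for contradiction, that $\mathcal{T}$ is singular at some boundary corner $p$, and to extract from this a hole in $|\mathcal{T}|$, contradicting Condition~3. First I would record the local combinatorics at $p$. Around $p$ there are six triangular cells of the background tiling; each tile of $\mathcal{T}$ having $p$ as a corner covers either one of these cells (at a $60^\circ$ corner) or two consecutive cells (at a $120^\circ$ corner), and distinct such tiles cover disjoint cells. Two tiles with corner $p$ share an edge precisely when they occupy consecutive cells (they then share the radial edge between them), so the components of the fan $G_p$ are exactly the maximal contiguous arcs of covered cells. Since $p$ lies on the boundary, at least one cell is uncovered, and singularity means there are at least two covered arcs, say $W_1$ and $W_2$, separated by uncovered \emph{gaps} on both cyclic sides of $p$.

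Next I would produce a loop through $p$ using Condition~2. The point is that $\Gamma(\mathcal{T})$ being connected gives more than path-connectedness of $|\mathcal{T}|$: any edge-path between tiles can be realised by a curve that runs through tile interiors and crosses each shared edge at its midpoint, and such a curve avoids the single point $p$ (each shared edge has its midpoint different from $p$, and each convex tile minus a corner is path-connected). Hence $|\mathcal{T}|\setminus\{p\}$ is path-connected. Choosing $x_1\in W_1$ and $x_2\in W_2$ close to $p$, I take a path $\beta\subseteq|\mathcal{T}|\setminus\{p\}$ from $x_1$ to $x_2$ and close it up by the radial segments $[x_1,p]\subseteq W_1$ and $[p,x_2]\subseteq W_2$ to form a closed curve $\gamma\subseteq|\mathcal{T}|$ that passes through $p$ exactly once. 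Note that if there were no such external connection then $|\mathcal{T}|\setminus\{p\}$ would fail to join $W_1$ to $W_2$, which is exactly what Condition~2 rules out.

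Finally I would derive the contradiction from Condition~3 via winding numbers. The closing arc $\alpha=[x_1,p]\cup[p,x_2]$ is a simple arc through $p$ whose two ends point into $W_1$ and $W_2$; since $W_1$ and $W_2$ are separated by gaps on both sides of $p$, I can pick uncovered points $q_1,q_2\notin|\mathcal{T}|$ arbitrarily close to $p$ lying in gaps on opposite sides of $\alpha$. As $|\mathcal{T}|$ is simply connected and $q_1,q_2\notin|\mathcal{T}|$, the loop $\gamma$ is null-homotopic in $|\mathcal{T}|\subseteq\mathbb{R}^2\setminus\{q_i\}$, so its winding numbers satisfy $w(\gamma,q_1)=w(\gamma,q_2)=0$. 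On the other hand, taking $q_1,q_2$ close enough to $p$ that the remote part $\beta$ of $\gamma$ does not separate them, only the strand $\alpha$ lies between $q_1$ and $q_2$, forcing $|w(\gamma,q_1)-w(\gamma,q_2)|=1$. This contradiction proves the lemma. The main obstacle is this last step: making rigorous that the constructed loop locally separates two gaps at $p$, that is, that crossing $\alpha$ changes the winding number by exactly one while $\beta$ contributes nothing nearby. This is routine planar topology, but it is where all the hypotheses are used together, and some care is needed to take $\beta$ simple (or to pass to a simple subloop through $p$) and to place $q_1,q_2$ correctly.
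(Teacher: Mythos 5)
Your proposal is correct and follows essentially the same route as the paper's proof: assume singularity at a boundary corner $p$, use Condition~2 (edge-connectivity) to build a loop through $p$ lying in $|\mathcal{T}|$, and contradict Condition~3 (simple connectedness) because the loop encloses an uncovered triangle adjacent to $p$. The only real difference is in the final step, where the paper takes a simple loop and asserts that one of the uncovered triangles lies inside it, while you finish with a two-point winding-number comparison --- a somewhat more robust rendering of the same contradiction, since it does not require the loop to be simple.
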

\begin{proof}
Let $p$ be a corner of a tile that lies on the boundary of $\mathcal{T}$. Suppose for a contradiction that $\mathcal{T}$ is singular at $p$. Then the fan $G_p$ of $\mathcal{T}$ at $p$ is not connected. Let $t_1$ and $t_2$ be tiles that lie in different components of $G_p$. Since $t_1$ and $t_2$ lie in different components, there exists an equilateral triangle $x_1$ that is not covered by $\mathcal{T}$, that has $p$ as a corner, and lies between $t_1$ and $t_2$ as we travel around $p$ clockwise. There exists another such equilateral triangle $x_2$ lying between $t_2$ and $t_1$. See Figure~\ref{fig:touch_contradiction} for an illustration of this situation, where the regions $x_1$ and $x_2$ are shaded. Let $\Gamma(\mathcal{T})$ be the graph whose vertices are the tiles of $\mathcal{T}$, with vertices being adjacent if and only if the tiles share an edge. Since $\mathcal{T}$ is a weak tredoku pattern, $\Gamma(\mathcal{T})$ is connected and so there is a simple path from $t_1$ to $t_2$ in $\Gamma(\mathcal{T})$. This path induces a path in $|\mathcal{T}|$ from the mid-point of $t_1$ to the mid-point of $t_2$, by connecting the mid-points of adjacent tiles in the path in $\Gamma$ by line segments. We may complete this path to a non-self-intersecting loop $\pi$ passing through $p$ (the dashed cycle in the figure) by connecting the mid-point of $t_2$ to $p$ and then to the mid-point of $t_1$. But one of the regions $x_1$ and $x_2$ lies inside $\pi$, and so $\pi$ cannot be contracted to a point. This contradicts the fact that $|\mathcal{T}|$ is simply connected, as required.
\end{proof}

\begin{figure}
\begin{center}
\begin{tikzpicture}[fill=gray!50, scale=0.5,rotate=90,
vertex/.style={circle,inner sep=0,fill=black,draw}]

\dia{1}{-1}
\node (L1) at (-0.5,0){};
\sla{1}{0}
\fill[fill=black] (1,0) circle [radius=0.1];
\node (t1) at (0,-1){$\scriptstyle t_1$};
\node (t2) at (2,1.5){$\scriptstyle t_2$};
\filldraw[fill=gray!30] (1,0) -- +(1,0) -- +(0.5,-0.866) -- cycle;
\filldraw[fill=gray!30] (0,0) -- +(1,0) -- +(0.5,0.866) -- cycle;
\draw[dashed] (1,-0.866) -- +(1,-1) -- +(2,-1) -- +(2.5,0) -- +(2,1) -- (1.75,0.433) -- (1,0) -- cycle;
\node (x1) at (0,0.65){$\scriptstyle x_1$};
\node (x2) at (2,-0.65){$\scriptstyle x_2$};
\end{tikzpicture}
\end{center}
\caption{A weak tredoku pattern does not touch at any corner.}
\label{fig:touch_contradiction}
\end{figure}
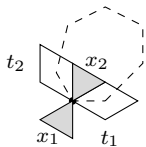

\begin{theorem}
\label{thm:topology}
A weak tredoku pattern is a tredoku pattern if and only if no leaf lies at the centre of its run of length $3$.
\end{theorem}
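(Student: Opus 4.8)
The plan is to translate Condition~4 into a statement about individual tiles and then classify tiles by the arrangement of their edges. Call a tile $t$ a \emph{cut tile} if $|\mathcal{T}\setminus\{t\}|$ fails to be path connected; then Condition~4 holds precisely when $\mathcal{T}$ has no cut tile. For a fixed $t$ I would record which of its four edges are \emph{shared} (meeting another tile of $\mathcal{T}$) and which lie on the boundary of $|\mathcal{T}|$. Using Condition~1 (runs have length $1$ or $3$), the two edges of $t$ in a single run direction are either both shared (so that run has length $3$ and $t$ is its centre), exactly one shared ($t$ is the end of a length-$3$ run), or neither shared (a run of length $1$). Reading this off for both run directions shows that $t$ has exactly two shared edges lying on opposite sides of $t$ if and only if $t$ is a leaf at the centre of its length-$3$ run; in every other case the shared edges of $t$ form a contiguous arc of $\partial t$ (a single block of consecutive edges). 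So the theorem reduces to: $t$ is a cut tile if and only if its shared edges are two opposite edges.

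For the easy direction I would show that if the shared edges of $t$ form a contiguous arc then $t$ is not a cut tile. List the edge-neighbours $s_1,\dots,s_k$ of $t$ in cyclic order; consecutive neighbours $s_i,s_{i+1}$ meet at the common corner of their two adjacent edges, so $s_1\cup\dots\cup s_k$ is path connected inside $|\mathcal{T}\setminus\{t\}|$. Then, given any tile $u$, I take a path from $u$ to $s_1$ in the connected graph $\Gamma(\mathcal{T})$ (Condition~2); if this path meets $t$ it does so by entering from some $s_i$ and leaving to some $s_j$, and I reroute that step along the connected block $s_i,\dots,s_j$ running around $\partial t$. This produces a path from $u$ avoiding $t$, so $|\mathcal{T}\setminus\{t\}|$ is path connected.

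For the reverse direction I would show that a leaf $t$ at the centre of its run is a cut tile. Here the two shared edges are opposite and the other two edges, say the left and right edges, lie on $\partial|\mathcal{T}|$. By Condition~3 together with Lemma~\ref{lem:not_touch}, $|\mathcal{T}|$ is a closed topological disk, and the straight segment $\sigma$ joining the midpoints of the left and right edges through the interior of $t$ is a crosscut of this disk: its interior lies in $\mathrm{int}(t)\subseteq\mathrm{int}|\mathcal{T}|$, and its endpoints, being interior points of unshared edges, lie on $\partial|\mathcal{T}|$ and on no other tile. By the crosscut (Jordan-arc) theorem $\sigma$ splits the disk into two regions, with the two edge-neighbours $t_1,t_3$ of $t$ on opposite sides; since $|\mathcal{T}\setminus\{t\}|$ is disjoint from $\sigma$, it meets both regions, each containing one of $t_1,t_3$, so it is disconnected.

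Combining the two directions with the classification above yields the theorem. The main obstacle is the reverse direction: making rigorous that the two neighbours across opposite edges genuinely lie in different components. This is where simple connectivity is essential, since without it the two sides could be joined by a path going the other way round, and where I must invoke Lemma~\ref{lem:not_touch} to guarantee that $|\mathcal{T}|$ is an honest disk so that a Jordan-arc separation argument applies; the contiguous-arc bookkeeping and the rerouting in the easy direction are routine by comparison.
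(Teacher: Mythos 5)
Your proof is correct, but its hard direction takes a genuinely different route from the paper's. Your edge-classification of tiles and your treatment of the easy direction (rerouting a graph path around the contiguous arc of neighbours, which are chained together through shared corners) is essentially the paper's converse argument in different bookkeeping: the paper instead shows that $\Delta=|t|\cap|\mathcal{T}\setminus\{t\}|$ is path connected and that every point reaches $\Delta$. The real divergence is in showing that a centre leaf $t$ is a cut tile. The paper first proves that the two run-neighbours $e_1,e_2$ of $t$ lie in different components of $\Gamma(\mathcal{T})$ with $t$ deleted (a graph cycle through $t$ would yield a loop enclosing one of the two uncovered triangles beside $t$, contradicting simple connectivity), and then must convert a hypothetical topological path in $|\mathcal{T}\setminus\{t\}|$ back into a graph path, which forces it to prove that $\mathcal{T}\setminus\{t\}$ is nonsingular and to invoke the simplicial approximation theorem. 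Your crosscut argument short-circuits all of that: once $|\mathcal{T}|$ is known to be a closed disk, the segment $\sigma$ through $t$ separates it, and $|\mathcal{T}\setminus\{t\}|$, being disjoint from $\sigma$ and meeting both sides (via $t_1$ and $t_3$), is disconnected --- no passage between region paths and graph paths is ever needed. What this buys in directness it pays for in the one step you assert rather than prove: that Condition~3 together with Lemma~\ref{lem:not_touch} makes $|\mathcal{T}|$ a closed topological disk. This is true --- nonsingularity makes $|\mathcal{T}|$ a compact $2$-manifold with boundary, and a compact, connected, simply connected planar surface with nonempty boundary is a disk --- but it is a classification-level fact of roughly the same depth as the paper's appeal to simplicial approximation, so to match the paper's standard of rigour you should either cite it explicitly or prove it for polygonal unions of tiles; with that reference supplied, your argument is complete and arguably cleaner than the original.
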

The theorem states that we may replace Condition~4 in the definition of a tredoku pattern by the condition that no leaf lies in the centre of its run of length $3$.
\begin{proof}
Let $\mathcal{T}$ be a weak tredoku pattern, and suppose that $t$ is a leaf in $\mathcal{T}$ that lies in the centre of its run $e_1,t,e_2$ of length $3$. We show that $\mathcal{T}$ is not a tredoku pattern.

Let $\Gamma(\mathcal{T})$ be the graph whose vertices are identified with tiles in $\mathcal{T}$, with vertices adjacent when tiles share an edge. As $\mathcal{T}$ is a weak tredoku pattern, $\Gamma(\mathcal{T})$ is connected. The vertex $t$ has degree $2$ in the graph, with adjacent vertices $e_1$ and $e_2$. We claim that the removal of $t$ disconnects $\Gamma(\mathcal{T})$, because $e_1$ and $e_2$ lie in separate components. To see this, suppose for a contradiction that there is a path in $\Gamma(\mathcal{T})$ from $e_1$ to $e_2$ avoiding $t$. By adding $t$ to this path, we produce a cycle in the graph. By connecting mid-points of tiles in the cycle, we find a loop $\pi$ contained in $|\mathcal{T}|$. The tile $t$ is edge-adjacent to two equilateral regions $x_1$ and $x_2$ not covered by $\mathcal{T}$, corresponding to the sides of the leaf $t$ that are not adjacent to tiles in $\mathcal{T}$. One of these regions is in the interior of our loop $\pi$. But this contradicts that fact that $|\mathcal{T}|$ is simply connected, as it is a weak tredoku pattern. So our claim follows.

Let $\mathcal{T}'=\mathcal{T}\setminus\{t\}$. We show that $\mathcal{T}$ is not a tredoku pattern by showing that $|\mathcal{T}'|$ is not path connected. We first observe that $\mathcal{T}'$ is nonsingular. To see this, we first note that since $\mathcal{T}$ is a weak tredoku pattern, Lemma~\ref{lem:not_touch} implies that we only need to check that $p$ is not singular for the corners $p$ of $t$. But then $p$ is adjacent to one of the regions $x_1$ and $x_2$ not covered by $\mathcal{T}$. Moreover, $p$ is adjacent to $t$ and one of the tiles $e_1$ and $e_2$. So the fan at $p$ remains connected when $t$ is removed, as $t$ lies at the end of the connected component in the fan of $\mathcal{T}$ at $p$. Hence $\mathcal{T}'$ is nonsingular.

Suppose, for a contradiction, that $|\mathcal{T}'|$ is path connected. Let $\pi$ be a path in $|\mathcal{T}'|$ between the mid-points of the tiles $e_1$ and $e_2$. By the simplicial approximation theorem, we may assume that this path is piecewise linear. The path $\pi$ might pass through corners of tiles in $\mathcal{T}'$, but we may modify it so that $\pi$ avoids all corners as $\mathcal{T}'$ is nonsingular. Then $\pi$ induces a path from $e_1$ to $e_2$ in $\Gamma(\mathcal{T})$ that avoids $t$. This is a contradiction, as we have shown above that $e_1$ and $e_2$ lie in separate connected components of $\Gamma(\mathcal{T})$ after $t$ is removed. Hence $\mathcal{T}'$ is not path connected and hence $\mathcal{T}$ is not a tredoku pattern, as required.

We now prove the converse of the theorem. Suppose that $\mathcal{T}$ is a weak tredoku pattern with no leaves in the centre of their runs. To prove that $\mathcal{T}$ is a tredoku pattern, we will show that $\mathcal{T}$ remains path connected after the removal of any tile $t$. 

First suppose that $t$ is a leaf. Since $t$ lies at an end of its run, $t$ has degree $1$ in the graph $\Gamma(\mathcal{T})$, and so its removal does not disconnect $\Gamma(\mathcal{T})$. So the pattern $\mathcal{T}'$ produced from $\mathcal{T}$ after removing $t$ is still path connected; indeed, $\mathcal{T}'$ is even edge connected.

Now suppose that $t$ is not a leaf. Let $\mathcal{T}'$ be the pattern produced from $\mathcal{T}$ after removing $t$. Let $\Delta$ be the set of points in the plane shared by $|t|$ and $|\mathcal{T}'|$. We see that every point $s$ in $|\mathcal{T}'|$ is connected by a path to some point in $\Delta$. (We may, for example, truncate a piecewise-linear path in $\mathcal{T}$ from $s$ to the centre of $t$ when it first reaches the boundary of $t$.) To show that $\mathcal{T}'$ is path-connected, we just need to show that $\Delta$ is path-connected. If there are four or three tiles in $\mathcal{T}'$ adjacent to $t$, the region $\Delta$ is the entire boundary of $t$ or three of the four edges of $t$ respectively, which is a path connected region. If there are just two tiles adjacent to $t$, these tiles are not adjacent to opposite edges of $t$, as $t$ is not a leaf. So $\Delta$ consists of two edges of $t$ that share a common end-point, and again $\mathcal{T}'$ is edge connected. This proves the theorem.
\end{proof}

\begin{corollary}
\label{cor:tredoku_leaves}
The leaves in a tredoku pattern lie at the ends of their runs of length $3$.
\end{corollary}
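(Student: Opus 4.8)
The plan is to deduce this immediately from Theorem~\ref{thm:topology}. First I would observe that every tredoku pattern is, by definition, a weak tredoku pattern: Conditions~1 to~3 hold by assumption (indeed Condition~4 holds too), so the hypotheses of Theorem~\ref{thm:topology} are satisfied. Since the pattern in question is in fact a genuine tredoku pattern, the `only if' direction of Theorem~\ref{thm:topology} applies, and tells us that no leaf lies at the centre of its run of length~$3$.

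Next I would translate this into the phrasing of the corollary. A run of length~$3$ is, by definition, a sequence $t_1,t_2,t_3$ of three tiles, and so it has exactly two end positions (namely $t_1$ and $t_3$) and a single central position ($t_2$). A leaf, by definition, is contained in exactly one run of length~$3$, so it occupies exactly one of these three positions. Theorem~\ref{thm:topology} has just ruled out the central position, so the leaf must occupy an end of its run of length~$3$, which is precisely the claim.

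I do not expect any real obstacle here: the corollary is essentially a restatement of one direction of Theorem~\ref{thm:topology}, combined with the trivial observation that a run of length~$3$ has only three positions. The only point that requires care is the logical direction of the implication being invoked — one must use that a tredoku pattern satisfies Condition~4, so that it is excluded from the `weak but not genuine' case of the theorem in which a leaf is permitted to sit at a run centre. This is what licenses the conclusion, and the rest is bookkeeping.
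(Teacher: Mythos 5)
Your proposal is correct and matches the paper exactly: the paper states this corollary without proof, as an immediate consequence of the `only if' direction of Theorem~\ref{thm:topology}, which is precisely the deduction you spell out. Your care about the logical direction (a tredoku pattern is in particular a weak tredoku pattern, so the theorem forbids a leaf at the centre of its run) is exactly the right point, and the rest is indeed bookkeeping.
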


In the following section, we consider subsets of tiles in a tredoku pattern, and wish to show the region they define is simply connected. The following (standard) lemma provides a simple criterion to check.

\begin{definition}
Let $\mathcal{U}$ be a pattern, and let $\mathcal{U}'$ be a subset of the tiles in $\mathcal{U}$. Define $\mathcal{C}$ to be the pattern consisting of all tiles in $\mathcal{U}$ that are not in $\mathcal{U}'$. We say that $\mathcal{C}$ is the \emph{complement to $\mathcal{U}'$ in $\mathcal{U}$}.
\end{definition}

\begin{lemma}
\label{lem:simply_connected}
Let $\mathcal{U}$ be a simply connected pattern. Let $\mathcal{U}'$ be a subset of the tiles in $\mathcal{U}$. Let $\mathcal{C}$ be the complement to
$\mathcal{U}'$ in $\mathcal{U}$. Let $\Delta=|\mathcal{U}'|\cap|\mathcal{C}|$ be the intersection of the regions defined by $\mathcal{U}'$ and $\mathcal{C}$. If $\Delta$ is a single edge, then $\mathcal{U}'$ is simply connected.
\end{lemma}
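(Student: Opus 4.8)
The plan is to apply the Seifert--van Kampen theorem to the decomposition $|\mathcal{U}| = |\mathcal{U}'| \cup |\mathcal{C}|$. Write $A = |\mathcal{U}'|$, $B = |\mathcal{C}|$ and $X = |\mathcal{U}| = A \cup B$, so that by hypothesis $A \cap B = \Delta$ is a single edge of the triangular lattice. Since a line segment is contractible, $\Delta$ is path-connected with $\pi_1(\Delta) = 1$. The idea is that gluing $A$ and $B$ along the contractible set $\Delta$ can neither create nor destroy loops, so the (trivial) fundamental group of $X$ must already split as a free product of those of $A$ and $B$; triviality of that product then forces each factor to be trivial.

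First I would verify the hypotheses needed to run van Kampen, beginning with connectedness. Because $\mathcal{U}$ is simply connected it is in particular connected, so $X$ is connected. If $A$ were disconnected, then since the connected set $\Delta$ lies in a single component $A_0$ of $A$, the remaining components $A_1$ would be disjoint from $B$ (as $A_1 \cap B \subseteq A \cap B = \Delta \subseteq A_0$) and disjoint from $A_0$; being a union of closed tiles, $A_1$ and $A_0 \cup B$ would then be disjoint compact sets whose union is $X$, contradicting connectedness of $X$. Hence $A$ is connected, and the same argument shows $B$ is connected. (Note that $\Delta$ being a genuine single edge also forces both $\mathcal{U}'$ and $\mathcal{C}$ to be nonempty.)

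To put the decomposition into the open-set form required by van Kampen, I would thicken: let $U$ be $A$ together with a thin open collar of $\Delta$ reaching into $B$, and let $V$ be $B$ together with a thin open collar of $\Delta$ reaching into $A$. Then $U$ and $V$ are open in $X$, they deformation retract onto $A$ and $B$ respectively, and $U \cap V$ is a thin strip around the single edge $\Delta$, which deformation retracts onto $\Delta$ and is therefore contractible. This is precisely where the hypothesis ``$\Delta$ is a single edge'' is used: if the two regions also met at an isolated corner, the overlap strip would fail to be connected and the argument would break down (indeed the conclusion would then be false). I expect this thickening step, establishing that a neighbourhood of the shared edge retracts onto it, to be the only point requiring genuine care.

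With these pieces in place the conclusion is immediate: van Kampen gives $\pi_1(X) \cong \pi_1(A) *_{\pi_1(\Delta)} \pi_1(B) = \pi_1(A) * \pi_1(B)$, the amalgamation being over the trivial group $\pi_1(\Delta) = 1$. Since $X$ is simply connected, $\pi_1(X) = 1$, and a free product of groups is trivial only when both factors are trivial; hence $\pi_1(A) = 1$. Combined with the connectedness of $A$ established above, this shows that $A = |\mathcal{U}'|$ is simply connected, as required. An alternative, more elementary route would be to argue directly that the planar complement $\mathbb{R}^2 \setminus A$ is connected, using that $\partial B \setminus \Delta$ is exposed to $\mathbb{R}^2 \setminus X$; but this runs into fiddly boundary bookkeeping, so I would prefer the van Kampen argument above.
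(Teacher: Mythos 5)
Your proof is correct, but it takes a genuinely different route from the paper. You invoke Seifert--van Kampen on the decomposition $|\mathcal{U}|=|\mathcal{U}'|\cup|\mathcal{C}|$, thickening the two closed pieces to open sets whose overlap deformation retracts onto the single shared edge $\Delta$, and then read off triviality of $\pi_1(|\mathcal{U}'|)$ from the fact that a free product is trivial only if both factors are. The paper instead argues by direct homotopy surgery: it establishes path-connectedness of $|\mathcal{U}'|$ by truncating paths at $\Delta$, takes a piecewise-linear loop in $|\mathcal{U}'|$, contracts it inside $|\mathcal{U}|$ (possible since $\mathcal{U}$ is simply connected), observes that the locus where the contracting homotopy leaves $|\mathcal{U}'|$ is a finite union of open polygons in the parameter square whose boundary values land in $\Delta$, and then redefines the homotopy on each such polygon by a piecewise-linear map into $\Delta$ --- legitimate precisely because $\Delta$ is a single (convex, contractible) edge. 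Both arguments use the single-edge hypothesis at the analogous spot, and both gloss a comparable technical point: your collar construction (that a neighbourhood of the edge, and of each piece, deformation retracts appropriately --- a standard regular-neighbourhood fact in PL topology, made delicate by tiles of $\mathcal{C}$ that may touch an endpoint of $\Delta$ only at a corner) versus the paper's claim that the homotopy can be modified polygon by polygon. What your approach buys is symmetry and a stronger conclusion for free ($\mathcal{C}$ is also simply connected) plus a cleaner conceptual skeleton; what the paper's buys is that it stays entirely elementary and piecewise-linear, never importing van Kampen, which fits the self-contained style of the rest of Section~\ref{sec:topology}. Your connectedness argument for $A$ and $B$ (separating a putative extra component as a clopen piece) is sound and is essentially the paper's truncation argument in a different guise.
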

\begin{proof}
We see that $\mathcal{U}'$ is path connected: paths from any points $x,y\in\mathcal{U}'$ to $\Delta$ can be obtained by truncating paths to a point of $\mathcal{C}$, and $\Delta$ is path connected. Let $\pi$ be a loop in $\mathcal{U}'$. By the simplicial approximation theorem, by replacing $\pi$ by a homotopic loop if necessary, we may assume that $\pi$ is piecewise linear. We may contract $\pi$ to a point via some piecewise linear homotopy $h:[0,1]\times [0,1]\rightarrow\mathcal{U}$, since $\mathcal{U}$ is simply connected. The set $\{(x,t)\in[0,1]\times [0,1]:h(x,t)\not\in \mathcal{U}'\}$ is a finite set of open polygons, bounded by elements in $\Delta$. Since $\Delta$ is a single edge, we may modify the homotopy on each such polygon using a piecewise-linear function in $\Delta$ to produce a homotopy $h':[0,1]\times[0,1]\rightarrow\mathcal{U}'$ in $\mathcal{U}'$ that contracts $\pi$ to a point. So $\mathcal{U}'$ is simply connected, as required.
\end{proof}

\section{Classifying tredoku patterns with $\tau=2\rho+1$}
\label{sec:sporadic}

\begin{definition}
A tredoku pattern is \emph{verdant} if it has $2\rho+1$ tiles, $\rho+2$ leaves and $\rho$ runs of length $3$.
\end{definition}
Verdant tredoku patterns are exactly those whose parameters lie on the uppermost diagonal in Figure~\ref{fig:graph}, as the following lemma shows.

\begin{lemma}
\label{lem:verdant_top_row}
Let $\mathcal{T}$ be a tredoku pattern with $\tau$ tiles, $\ell$ leaves and $\rho$ runs of length $3$. Then $\mathcal{T}$ is verdant if and only if $\ell=\lceil \tau/2\rceil+1$.
\end{lemma}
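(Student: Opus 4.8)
The plan is to derive everything from the counting identity $\ell = 2\tau - 3\rho$ supplied by Lemma~\ref{lem:counting}, together with a parity analysis of $\tau$; no geometric input is required here.

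First I would dispatch the forward implication. If $\mathcal{T}$ is verdant then by definition $\tau = 2\rho+1$ and $\ell = \rho+2$. Since $\tau$ is odd, $\lceil\tau/2\rceil = \lceil\rho+\tfrac{1}{2}\rceil = \rho+1$, and hence $\lceil\tau/2\rceil + 1 = \rho + 2 = \ell$, as required.

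For the converse, suppose $\mathcal{T}$ is a tredoku pattern with $\ell = \lceil\tau/2\rceil+1$, and split on the parity of $\tau$. Writing $\tau = 2k+1$ gives $\lceil\tau/2\rceil = k+1$, hence $\ell = k+2$; substituting into $\ell = 2\tau - 3\rho$ yields $3\rho = 4k+2-(k+2) = 3k$, so $\rho = k$, and then $\tau = 2\rho+1$ and $\ell = \rho+2$, i.e.\ $\mathcal{T}$ is verdant. Writing instead $\tau = 2k$ gives $\ell = k+1$, and the same substitution gives $3\rho = 4k-(k+1) = 3k-1$, which has no integer solution for $\rho$. Since $\rho$ is necessarily a non-negative integer, being the number of runs of length $3$, this case cannot occur; thus $\tau$ must be odd and $\mathcal{T}$ is verdant.

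The argument is entirely arithmetic, so I do not anticipate a genuine obstacle. The only point requiring care is the even-$\tau$ case, where the conclusion follows not from any contradiction internal to the pair $(\tau,\ell)$ but from the integrality of $\rho$ forced by the counting lemma; I would flag explicitly that it is this integrality constraint, rather than the connectivity conditions, that rules out even $\tau$ on the top diagonal.
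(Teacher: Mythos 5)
Your proof is correct and takes essentially the same approach as the paper: both implications rest on the counting identity $\ell=2\tau-3\rho$ from Lemma~\ref{lem:counting} together with a parity split on $\tau$, with the even case excluded precisely by the integrality of $\rho$. The only cosmetic difference is that you parametrize $\tau$ as $2k$ or $2k+1$, whereas the paper manipulates the fractions directly.
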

\begin{proof}
Suppose that $\mathcal{T}$ is verdant. Then $\ell=\rho+2=\lceil (2\rho+1)/2\rceil+1=\lceil \tau/2\rceil+1$. Conversely suppose that $\ell=\lceil\tau/2\rceil+1$. Suppose for a contradiction that $\tau$ is even. By Lemma~\ref{lem:counting},
\[
\rho=(2\tau-\ell)/3=(2\tau-\tau/2-1)/3=\tau/2-1/3.
\]
This is a contradiction, since $\rho$ is an integer, and so we may assume that $\tau$ is odd. By Lemma~\ref{lem:counting},
\[
\rho=(2\tau-\ell)/3=(2\tau-\tau/2-3/2)/3=(\tau-1)/2
\]
and so $\tau=2\rho+1$. Thus $\ell=\lceil\tau/2\rceil+1=\rho+2$, and hence $\mathcal{T}$ is verdant, as required.
\end{proof}
So verdant tredoku patterns have the largest possible number of leaves given their number of tiles. This section aims (Theorem~\ref{thm:verdant} below) to classify all verdant tredoku patterns. We begin with two methods for building larger tredoku patterns from smaller ones.

\begin{definition}
Let $\mathcal{T}$ be a tredoku pattern, and let $t$ be a leaf in this pattern. Suppose there exist tiles $s_1$ and $s_2$, not in $\mathcal{T}$, such that $s_1,t,s_2$ is a run of length $3$. Moreover, suppose the intersection of each tile $s_i$ with the region $\mathcal{T}$ is contained in $t$. Then $\mathcal{T}\cup\{s_1,s_2\}$ is a \emph{$2$-leaf extension} of $\mathcal{T}$ at $t$.
\end{definition}
Note that many leaves $t$ will not possess tiles $s_1$ and $s_2$ satisfying the conditions above. For example, in Figure~\ref{fig:small4} only the shaded leaf  possesses neighbouring tiles $s_1$ and $s_2$ of the right form. In this case, there is a unique choice for the tile northwest of the shaded leaf, but either choice for the tile southeast of the leaf will work. The $2$-leaf extension in this example is also a tredoku pattern, with $10$ tiles and $5$ leaves. As an aside, we state the following lemma, which shows that the $2$-leaf extension of a tredoku pattern is always a tredoku pattern. 

\begin{lemma}
\label{lem:2leaf}
Let $\mathcal{T}$, $t$, $s_1$ and $s_2$ satisfy the conditions in the definition above, and let $\mathcal{T}'=\mathcal{T}\cup\{s_1,s_2\}$ be the $2$-leaf extension of $\mathcal{T}$. Then $\mathcal{T}'$ is a tredoku pattern. If $\mathcal{T}$ has $\tau$ tiles, $\rho$ runs of length $3$ and $\ell$ leaves, then $\mathcal{T}'$ has $\tau+2$ tiles, $\rho+1$ runs of length $3$ and $\ell+1$ leaves.
\end{lemma}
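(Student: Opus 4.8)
The plan is to show that $\mathcal{T}'$ satisfies Conditions 1--3, so that it is a weak tredoku pattern, and then to invoke Theorem~\ref{thm:topology}, which reduces the verification of Condition~4 to the purely local check that no leaf lies at the centre of its length-$3$ run. First I would record that $\mathcal{T}'$ is a genuine pattern: the hypothesis $|s_i|\cap|\mathcal{T}|\subseteq|t|$ guarantees that $s_1$ and $s_2$ meet $\mathcal{T}$ only along $t$, so they overlap no tile of $\mathcal{T}$ in area, and since they sit on opposite sides of $t$ they do not overlap one another. The bulk of the work is bookkeeping about runs, followed by one topological point.

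The geometric heart of the argument is to pin down the run structure. As $\mathcal{T}$ is a tredoku pattern and $t$ is a leaf, Corollary~\ref{cor:tredoku_leaves} places $t$ at the end of its unique length-$3$ run; in the perpendicular edge-direction $t$ forms a run of length $1$, so both edges of $t$ in that direction are free in $\mathcal{T}$. The run $s_1,t,s_2$ occupies a pair of opposite edges of $t$, and it cannot be the pair used by the existing length-$3$ run, since one of those edges already carries a tile of $\mathcal{T}$ which $s_1,s_2\notin\mathcal{T}$ cannot overlap. Hence $s_1,t,s_2$ lies in the free direction: its appearance promotes $t$'s length-$1$ run to this new length-$3$ run and leaves every other run of $\mathcal{T}$ untouched, because $s_1,s_2$ are edge-adjacent only to $t$. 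In the remaining direction $s_1$ and $s_2$ each form a run of length~$1$. This establishes Condition~1, yields the run count $\rho+1$, and---since $t$ now lies in two length-$3$ runs while $s_1,s_2$ become new leaves---gives leaf count $(\ell-1)+2=\ell+1$ and tile count $\tau+2$.

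Condition~2 is immediate, as $s_1$ and $s_2$ are each edge-adjacent to $t\in\mathcal{T}$. For Condition~3 I would argue that the attachment creates no hole: the containment hypothesis forces $|s_i|\cap|\mathcal{T}|$ to be exactly the single edge that $s_i$ shares with $t$, while $|s_1|\cap|s_2|=\emptyset$. Thus $|\mathcal{T}'|$ is obtained from the simply connected region $|\mathcal{T}|$ by gluing the disks $|s_1|$ and $|s_2|$ along disjoint single edges, and a standard van Kampen gluing along a contractible arc preserves simple connectivity. (Note that Lemma~\ref{lem:simply_connected} runs in the opposite direction, since it removes tiles, so it cannot be applied directly here.) I expect this to be the main obstacle: one must be certain that no hole appears, and this is precisely where the hypothesis $|s_i|\cap|\mathcal{T}|\subseteq|t|$ is essential, for without it an $s_i$ could wrap around and touch $\mathcal{T}$ along a second edge, enclosing an uncovered region.

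Finally I would apply Theorem~\ref{thm:topology}. Writing $L$ for the leaf-set of $\mathcal{T}$, the leaves of $\mathcal{T}'$ are precisely $(L\setminus\{t\})\cup\{s_1,s_2\}$: the tile $t$ is no longer a leaf, as it now lies in two length-$3$ runs, and no tile of $\mathcal{T}$ other than $t$ changes its run membership. Every leaf of $\mathcal{T}$ lies at the end of its run by Corollary~\ref{cor:tredoku_leaves}, and those runs are unchanged in $\mathcal{T}'$, while $s_1$ and $s_2$ occupy the two ends of the run $s_1,t,s_2$. Although $t$ is central in this new run, that is harmless precisely because $t$ is not a leaf. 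Hence no leaf of $\mathcal{T}'$ lies at the centre of its length-$3$ run, so Theorem~\ref{thm:topology} upgrades the weak tredoku pattern $\mathcal{T}'$ to a tredoku pattern, with the claimed parameters $\tau+2$, $\rho+1$ and $\ell+1$.
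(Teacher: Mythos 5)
Your proposal is correct. Note, however, that the paper itself states Lemma~\ref{lem:2leaf} only ``as an aside'' and gives no proof of it, so there is nothing to compare against line by line; your argument fills that gap. What you do is exactly what the paper's machinery is designed for, and it mirrors the strategy the paper uses repeatedly inside the proof of Theorem~\ref{thm:verdant}: verify Conditions 1--3 directly to get a weak tredoku pattern, then replace Condition~4 by the local criterion of Theorem~\ref{thm:topology}, using Corollary~\ref{cor:tredoku_leaves} to know that the pre-existing leaves of $\mathcal{T}$ sit at the ends of their runs. Your bookkeeping of the run structure is right (the containment hypothesis $|s_i|\cap|\mathcal{T}|\subseteq|t|$ forces the new run into the direction of $t$'s old length-$1$ run, leaves all other runs untouched, and keeps the new run maximal), and your treatment of simple connectivity is the one genuinely nontrivial step: you correctly observe that Lemma~\ref{lem:simply_connected} runs in the wrong direction (it removes tiles from a simply connected pattern), so one needs a separate gluing argument, and attaching the disks $|s_1|$, $|s_2|$ along single disjoint boundary edges via van Kampen is a clean way to do it. The only remark worth adding is that Corollary~\ref{cor:tredoku_leaves} is not actually needed to rule out the new run sharing a direction with $t$'s old length-$3$ run --- overlap with the existing neighbour of $t$ already forbids that whether $t$ is at the end or the centre --- but it is needed, as you use it, for the final check that no leaf of $\mathcal{T}'$ is central in its run.
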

We remark that if $\mathcal{T}$ is verdant, then so is its $2$-leaf extension.

\begin{definition}
Let $\mathcal{T}_1$ and $\mathcal{T}_2$ be tredoku patterns. Suppose that the intersection of the regions $|\mathcal{T}_1|$ and $|\mathcal{T}_2|$ is exactly $|t|$, where $t$ is a tile which is common to both patterns. Moreover, suppose that $t$ is a leaf in both $\mathcal{T}_1$ and $\mathcal{T}_2$, the runs in $\mathcal{T}_1$ and $\mathcal{T}_2$ are in different directions, and $t$ is at the end of its run in both $\mathcal{T}_1$ and $\mathcal{T}_2$. We say that the pattern $\mathcal{T}=\mathcal{T}_1\cup \mathcal{T}_2$ is the \emph{merging of $\mathcal{T}_1$ and $\mathcal{T}_2$ at $t$}.
\end{definition}
As examples, we see that the last four patterns in Figure~\ref{fig:sporadic} are all mergings: the common tile $t$ is taken to be a non-leaf that is the end of both its runs of length $3$. As an aside, we note that the result of merging $\mathcal{T}_1$ and $\mathcal{T}_2$ at $t$ is always a tredoku pattern:
\begin{lemma}
\label{lem:merge}
Let $\mathcal{T}_1$, $\mathcal{T}_2$ and $t$ satisfy the conditions in the definition above. The merging $\mathcal{T}$ of $\mathcal{T}_1$ and $\mathcal{T}_2$ at $t$ is a tredoku pattern. If $\mathcal{T}_i$ has $\tau_i$ tiles, $\rho_i$ runs of length $3$ and $\ell_i$ leaves, then $\mathcal{T}$ has $\tau_1+\tau_2-1$ tiles, $\rho_1+\rho_2$ runs of length $3$ and $\ell_1+\ell_2-2$ leaves.  
\end{lemma}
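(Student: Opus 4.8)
The plan is to verify directly that $\mathcal{T}$ satisfies Conditions~1--3 in the definition of a tredoku pattern, so that $\mathcal{T}$ is a weak tredoku pattern, and then to deduce Condition~4 cheaply from Theorem~\ref{thm:topology}. Both the parameter count and Condition~4 hinge on understanding exactly how the runs of $\mathcal{T}_1$ and $\mathcal{T}_2$ sit inside $\mathcal{T}$, so I would establish this local structural picture first.

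The key preliminary observation is that, because $|\mathcal{T}_1|\cap|\mathcal{T}_2|=|t|$, no tile of $\mathcal{T}_1\setminus\{t\}$ overlaps or is edge-adjacent to any tile of $\mathcal{T}_2\setminus\{t\}$: if two such tiles shared an interior point or a full edge, that point or edge would lie in $|t|$, forcing one of the tiles to coincide with $t$. Consequently $\mathcal{T}$ is a genuine (non-overlapping) pattern and $\Gamma(\mathcal{T})$ is the union of $\Gamma(\mathcal{T}_1)$ and $\Gamma(\mathcal{T}_2)$, glued only at the vertex $t$. Next I would analyse the two run-directions at $t$. Since $t$ is a leaf of each $\mathcal{T}_i$, lies at the end of its length-$3$ run there, and these two runs point in different directions, the two pairs of parallel edges of $t$ behave as follows: in one direction $t$ has neighbours only in $\mathcal{T}_1$ (its length-$3$ run $r_1$) and none in $\mathcal{T}_2$, while in the other direction $t$ has neighbours only in $\mathcal{T}_2$ (its length-$3$ run $r_2$) and none in $\mathcal{T}_1$.

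From this picture the runs of $\mathcal{T}$ are pinned down: every run of $\mathcal{T}$ is either a run of $\mathcal{T}_1$ or a run of $\mathcal{T}_2$, the only caveat being that the length-$1$ run of $t$ in $\mathcal{T}_1$ is absorbed into $r_2$ and vice versa. In particular every run still has length $1$ or $3$ (Condition~1), the length-$3$ runs are exactly the $\rho_1+\rho_2$ length-$3$ runs of the two patterns (with $r_1\neq r_2$ since they point in different directions), and the tile count is $\tau_1+\tau_2-1$. For the leaf count I would note that each tile other than $t$ keeps both its runs unchanged, hence its leaf status, whereas $t$ now lies in two length-$3$ runs and so is not a leaf; removing $t$ from the leaf set of each $\mathcal{T}_i$ gives $(\ell_1-1)+(\ell_2-1)=\ell_1+\ell_2-2$ leaves, consistent with Lemma~\ref{lem:counting}.

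It remains to check Conditions~2 and~3 and then Condition~4. Condition~2 is immediate, since $\Gamma(\mathcal{T}_1)$ and $\Gamma(\mathcal{T}_2)$ are connected and share the vertex $t$. For Condition~3 I would use that $|\mathcal{T}|=|\mathcal{T}_1|\cup|\mathcal{T}_2|$ is a union of two simply connected polygonal regions whose intersection $|t|$ is connected and contractible; the standard van Kampen gluing argument for such regions then shows $|\mathcal{T}|$ is simply connected. Having shown $\mathcal{T}$ is a weak tredoku pattern, I finish with Theorem~\ref{thm:topology}: by the run analysis every leaf of $\mathcal{T}$ is a leaf of $\mathcal{T}_1$ or of $\mathcal{T}_2$ with the same length-$3$ run, and by Corollary~\ref{cor:tredoku_leaves} such a leaf lies at the \emph{end} of that run, so no leaf of $\mathcal{T}$ sits at the centre of its run; Theorem~\ref{thm:topology} then upgrades $\mathcal{T}$ to a tredoku pattern. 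The main obstacle is the run-direction bookkeeping of the second paragraph: correctly combining the three hypotheses on $t$ (leaf in both, different directions, end of its run in both) to guarantee both that no run is lengthened past $3$ at $t$ and that $r_1\neq r_2$, since everything else—the counts and all four conditions—follows from that local analysis together with Theorem~\ref{thm:topology}.
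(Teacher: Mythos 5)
Your proof is correct; note, though, that there is no proof in the paper to compare it with: Lemma~\ref{lem:merge} is stated only ``as an aside'' and left unproved (as is Lemma~\ref{lem:2leaf}). Your strategy---verify Conditions 1--3 directly, so that $\mathcal{T}$ is a weak tredoku pattern, then discharge Condition 4 via Theorem~\ref{thm:topology} and Corollary~\ref{cor:tredoku_leaves}---is exactly the style the author deploys inside the proof of Theorem~\ref{thm:verdant} when verifying that the pieces $\mathcal{T}'$ and $\mathcal{T}''$ are tredoku patterns, so your argument fills the gap in a way consistent with the paper's machinery. The two load-bearing steps both hold up. First, the observation that $|\mathcal{T}_1|\cap|\mathcal{T}_2|=|t|$ forbids any edge-adjacency between tiles of $\mathcal{T}_1\setminus\{t\}$ and tiles of $\mathcal{T}_2\setminus\{t\}$: here, for a shared edge, the precise contradiction is that such an edge must be an edge or the diagonal of $t$, whence one of the two tiles would overlap $t$ and so, being non-overlapping with $t$ inside its own pattern, would equal $t$ (your phrase ``forcing one of the tiles to coincide with $t$'' compresses this, but the claim is sound). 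Second, the direction bookkeeping at $t$: since a tile admits runs in only two directions and the two length-$3$ runs through $t$ point in different directions, the length-$3$ direction of $\mathcal{T}_1$ at $t$ is the length-$1$ direction of $\mathcal{T}_2$ at $t$ and vice versa; this guarantees that neither run is extended past length $3$ in $\mathcal{T}$, that the two runs are distinct, and that every tile other than $t$ retains exactly the runs it had in its original pattern, which yields all three counts and shows (via Corollary~\ref{cor:tredoku_leaves} applied to $\mathcal{T}_1$ and $\mathcal{T}_2$) that no leaf of $\mathcal{T}$ is central in its run. Your appeal to a van Kampen gluing argument for Condition~3 is at the same level of rigour as the paper's own topological reasoning (e.g.\ Lemma~\ref{lem:simply_connected}), so that step raises no concern either.
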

We remark that if $\mathcal{T}_1$ and $\mathcal{T}_2$ are verdant, then so is their merging $\mathcal{T}$.

We are now in a position to classify all verdant tredoku patterns. Our approach will be to show that a verdant tredoku pattern with more than $5$ tiles is either a $2$-leaf extension of a smaller verdant tredoku pattern or is a merging of two smaller verdant tredoku patterns.
\begin{theorem}
\label{thm:verdant}
Up to symmetry, the patterns with $5$, $7$, $9$, $11$, $13$ and $17$ tiles listed in Figures~\ref{fig:sporadic5} to~\ref{fig:sporadic1317} are the only verdant tredoku patterns. 
\end{theorem}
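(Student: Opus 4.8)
The plan is to argue by induction on $\tau$, taking as the inductive building blocks the two operations already introduced, and proving the central claim flagged before the theorem: every verdant tredoku pattern with more than $5$ tiles arises either as a $2$-leaf extension of a smaller verdant pattern, or as a merging of two smaller verdant patterns. Lemmas~\ref{lem:2leaf} and~\ref{lem:merge} already supply the reverse direction (these operations produce verdant patterns whenever they can be performed), so the substantive work is to establish this \emph{reduction} and then to carry out the enumeration it licenses.

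To prove the reduction I would encode a verdant pattern by its \emph{run tree} $R$: the vertices are the $\rho$ runs of length $3$, and each non-leaf tile is recorded as an edge joining the two length-$3$ runs containing it (a non-leaf lies in exactly two such runs, in different directions, so this is a well-defined simple graph). By the counting argument of Lemma~\ref{lem:counting}, a verdant pattern has $\rho-1$ non-leaf tiles, so $R$ has $\rho$ vertices and $\rho-1$ edges; edge-connectivity (Condition~2) forces $R$ to be connected, hence a tree. The degree of a run in $R$ equals its number of non-leaf tiles, and by Theorem~\ref{thm:topology} and Corollary~\ref{cor:tredoku_leaves} the centre of every run is a non-leaf while its leaves sit at the ends, so each run carries exactly $3-\deg$ pattern-leaves.

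For the reduction itself I would classify each non-leaf tile by the pair of positions (end or centre) it occupies in its two runs. An \emph{end--end} tile is precisely a place where the pattern splits as a merging, and an \emph{end--centre} tile that is the centre of a graph-leaf run of $R$ is precisely where a $2$-leaf extension can be undone. The key combinatorial step is that at least one such configuration exists whenever $\rho\geq 3$: assuming there is no end--end tile, I start at any graph-leaf run; if its centre lies at an end of its neighbour we are done, and otherwise I follow the chain of runs $r',r'',\dots$ in which each run's centre is an end of the previous run. The absence of end--end tiles forces every non-leaf end to be the centre of the next run, and every interior run has degree $\geq 2$ and so always supplies such an end to continue; since $R$ is a finite tree the chain cannot repeat and must terminate at a graph-leaf run, whose centre then lies at the end of its neighbour, giving a $2$-leaf extension. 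I would then verify the geometric and topological hypotheses of the two operations for the reduced pieces using nonsingularity (Lemma~\ref{lem:not_touch}), Theorem~\ref{thm:topology}, and Lemma~\ref{lem:simply_connected}, and check via the counting lemma that the smaller pieces are themselves verdant.

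With the reduction established, I would run the induction explicitly: starting from the directly-enumerated $5$-tile verdant patterns (the base case, where the unique non-leaf is the shared centre of two runs), I would generate at each stage all $2$-leaf extensions of, and all mergings among, the patterns already listed, discarding duplicates up to symmetry and discarding any candidate whose tiles overlap or whose union fails simple connectivity. Completeness of the final list follows from the reduction, and correctness from Lemmas~\ref{lem:2leaf} and~\ref{lem:merge}. I expect the main obstacle to be \emph{termination together with the gap} at $\tau=15$: one must show that no $13$-tile verdant pattern admits a valid $2$-leaf extension and that no merging of smaller verdant patterns yields a valid $15$-tile pattern (note that, since no $15$-tile verdant pattern exists, every $17$-tile example must instead arise by merging), and that every $17$-tile verdant pattern is too crowded to admit any further valid extension or merging. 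This is a finite but delicate check, precisely because here it is the geometric non-overlap conditions, rather than the purely combinatorial run-tree bookkeeping, that do the work.
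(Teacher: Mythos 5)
Your proposal is correct, and its skeleton necessarily matches the paper's (which announces the plan just before the theorem): induct on size, prove that every verdant pattern with more than $5$ tiles is a $2$-leaf extension of, or a merging of, smaller verdant patterns, and check that the explicit list is closed under both operations. But your proof of the reduction itself takes a genuinely different route. The paper works concretely with runs: no two $2$-leaf runs can meet; a $2$-leaf run meeting the end of another run forces the $7$-tile pattern; the count $\rho_2=\rho_0+2$ then yields a $2$-leaf run meeting a $1$-leaf run at its centre; and the configuration of Figure~\ref{fig:half_way_through} is settled by cases on whether $f$ and $g$ are leaves, the hardest case requiring an explicit shaded-region argument plus an appeal to Theorem~\ref{thm:main}(i) to see that both pieces of the split are verdant. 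You instead work abstractly in the ``run tree'' $R$: an end--end non-leaf tile is a merging point, and if none exists, a non-backtracking chain in $R$ (each run entered through its centre and left through a non-leaf end, which by the no-end--end hypothesis is the centre of the next run) must terminate, since $R$ is a finite tree and for $\rho\geq 3$ no two of its leaves are adjacent, at a degree-one run whose centre sits at an end of its neighbour, i.e.\ at an undoable $2$-leaf extension. This chain argument is sound (a non-backtracking walk in a tree cannot revisit a vertex) and is noticeably cleaner than the paper's case analysis; it also eliminates the $\rho_2>\rho_0$ counting entirely. What you buy in clarity you pay for by deferring the geometric obligations, which is where most of the paper's labour actually lies: verifying that your split pieces are honest tredoku patterns (Lemma~\ref{lem:simply_connected} needs the two sides to meet in a single edge, which requires the fan-connectivity Lemma~\ref{lem:not_touch} together with the observation that every edge-path between the two sides passes through the splitting tile) and that the intersection hypotheses in the definitions of $2$-leaf extension and merging hold. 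You cite exactly the right tools, so I regard these as checkable details rather than gaps, but note one correction: verdancy of the two merged pieces does not follow from Lemma~\ref{lem:counting} alone; as in the paper's computation, you need the upper bound of Theorem~\ref{thm:main}(i) (equivalently $\tau_i\le 2\rho_i+1$) to force both halves to be extremal. Your closing list of terminal checks (no valid extension of the $13$-tile patterns, no valid $15$-tile merging, nothing beyond $17$ tiles) is exactly what the paper disposes of by inspecting its annotated figures.
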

\begin{proof}
Let $V$ be the set of patterns listed in Figures~\ref{fig:sporadic5} to~\ref{fig:sporadic1317}. These patterns are all verdant tredoku patterns. We will show there are no other verdant tredoku patterns (up to symmetry).

Certainly there are no verdant tredoku patterns with $3$ tiles. A verdant tredoku pattern with $5$ tiles has two runs and $4$ leaves, so each run has $2$ leaves. These runs must intersect in their centre tiles, by Corollary~\ref{cor:tredoku_leaves}. So the only $5$ tile verdant tredoku patterns, up to symmetry, are listed in Figure~\ref{fig:sporadic5}. So it is sufficient to show that Figures~\ref{fig:sporadic79} to~\ref{fig:sporadic1317} list all verdant tredoku patterns with $7$ or more tiles.

We note that $V$ is closed under $2$-leaf extensions. Indeed, there is only one $2$-leaf extension of a pattern in $V$, from a $5$-tile pattern to a $7$-tile pattern, as most leaves cannot be used for $2$-leaf extensions. It is also not hard to check that $V$ is also closed under merging, in the sense that if we merge two patterns that lie in $V$ (up to symmetry), the result is also a pattern in $V$ (up to symmetry). To help with this check, the leaves which are suitable for merging because they could lie at the end of a new run of length $3$ are shaded in the figures. Most of these leaves cannot, in fact, be used to merge two patterns in $V$. This is because patterns would intersect outside the leaf, or runs of length $4$ or more would be created.

We note that all patterns in $V$ with $9$ or more tiles are the result of merging two smaller patterns in $V$. We show below that every verdant tredoku pattern with $7$ or more tiles is either a $2$-leaf extension of a verdant tredoku pattern, or is the merger of two verdant tredoku patterns. This suffices to prove the theorem.

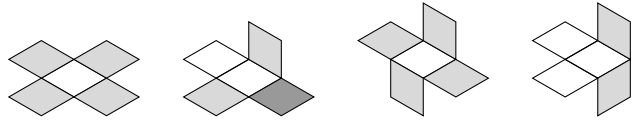
\begin{figure}
\begin{center}
\begin{tikzpicture}[fill=gray!50, scale=0.5,rotate=90,
vertex/.style={circle,inner sep=2,fill=black,draw}]

\dia{0}{0}
\lgdia{0}{1}
\lgdia{0}{-1}
\lgdia{1}{-1}
\lgdia{-1}{1}

\end{tikzpicture}
\hspace{0.3cm} 
\begin{tikzpicture}[fill=gray!50, scale=0.5,rotate=90,
vertex/.style={circle,inner sep=2,fill=black,draw}]

\dia{0}{0}
\dia{0}{1}
\dgdia{0}{-1}
\lgsla{1}{-1}
\lgdia{-1}{1}

\end{tikzpicture}
\hspace{0.3cm} 
\begin{tikzpicture}[fill=gray!50, scale=0.5,rotate=90,
vertex/.style={circle,inner sep=2,fill=black,draw}]

\dia{0}{0}
\lgdia{0}{1}
\lgdia{0}{-1}
\lgsla{1}{-1}
\lgsla{-1}{0}

\end{tikzpicture}
\hspace{0.3cm} 
\begin{tikzpicture}[fill=gray!50, scale=0.5,rotate=90,
vertex/.style={circle,inner sep=2,fill=black,draw}]

\dia{0}{0}
\dia{0}{1}
\lgbsla{0}{-1}
\lgsla{1}{-1}
\dia{-1}{1}

\end{tikzpicture}
\end{center}
\caption{All tredoku patterns with $5$ tiles and $4$ leaves, up to symmetry. Shaded leaves are suitable for merging with another pattern. The darkly shaded leaf is also suitable for $2$-leaf extensions.}
\label{fig:sporadic5}
\end{figure}

\begin{figure}
\begin{center}
\begin{tikzpicture}[fill=gray!50, scale=0.5,rotate=90,
vertex/.style={circle,inner sep=2,fill=black,draw}]

\dia{0}{0}
\dia{0}{1}
\dia{0}{2}
\lgdia{1}{-1}
\lgsla{-1}{0}
\dia{-1}{2}
\sla{1}{0}

\end{tikzpicture}\hspace{0.5cm}
\begin{tikzpicture}[fill=gray!50, scale=0.5,rotate=90,
vertex/.style={circle,inner sep=2,fill=black,draw}]


\dia{0}{0}
\dia{0}{1}
\dia{1}{0}
\bsla{1}{1}
\sla{-1}{1}
\bsla{-1}{2}
\dia{-1}{3}
\bsla{-2}{2}
\bsla{-2}{3}

\end{tikzpicture}
\end{center}
\caption{The unique tredoku pattern with $7$ tiles and $5$ leaves, and the unique tredoku pattern with $9$ tiles and $6$ leaves, up to symmetry. Shaded leaves are suitable for merging with another pattern. No leaves are suitable for $2$-leaf extensions.}
\label{fig:sporadic79}
\end{figure}
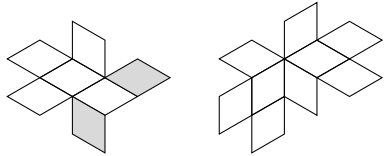

\begin{figure}
\begin{center}
\begin{tikzpicture}[fill=gray!50, scale=0.5,rotate=90,
vertex/.style={circle,inner sep=2,fill=black,draw}]

\lgdia{0}{0}
\dia{0}{1}
\dia{0}{2}
\dia{-1}{3}
\dia{-1}{4}
\dia{-1}{5}
\lgsla{1}{0}
\sla{-1}{1}
\sla{-2}{3}
\dia{-2}{5}
\sla{0}{3}

\end{tikzpicture}\hspace{0.5cm}
\begin{tikzpicture}[fill=gray!50, scale=0.5,rotate=90,
vertex/.style={circle,inner sep=2,fill=black,draw}]

\dia{0}{0}
\dia{0}{1}
\dia{0}{2}
\dia{-1}{3}
\dia{-1}{4}
\dia{-1}{5}
\lgdia{1}{0}
\sla{-1}{1}
\sla{-2}{3}
\dia{-2}{5}
\sla{0}{3}

\end{tikzpicture}
\end{center}
\caption{Tredoku patterns with $11$ tiles and $7$ leaves. Shaded leaves are suitable for merging with another pattern. No leaves are suitable for $2$-leaf extensions.}
\label{fig:sporadic11}
\end{figure}
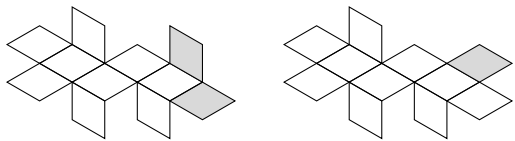

\begin{figure}
\begin{center}
\begin{tikzpicture}[fill=gray!50, scale=0.5,rotate=90,
vertex/.style={circle,inner sep=2,fill=black,draw}]

\dia{0}{0}
\dia{-1}{1}
\dia{-2}{2}
\dia{-2}{3}
\dia{-3}{4}
\dia{-3}{5}
\dia{-4}{6}
\dia{-3}{6}
\sla{-2}{4}
\sla{-4}{4}
\bsla{-2}{1}
\dia{-1}{0}
\bsla{0}{1}

\end{tikzpicture}\hspace{0.5cm}
\begin{tikzpicture}[fill=gray!50, scale=0.5,rotate=90,
vertex/.style={circle,inner sep=2,fill=black,draw}]

\dia{-2}{3}
\dia{-3}{4}
\dia{-4}{6}
\dia{-3}{6}
\sla{-2}{4}
\sla{-4}{4}

\bsla{-2}{2}
\bsla{-3}{2}
\dia{-2}{1}
\bsla{-1}{1}
\bsla{0}{1}
\bsla{0}{0}

\end{tikzpicture}\hspace{0.5cm}
\begin{tikzpicture}[fill=gray!50, scale=0.5,rotate=90,
vertex/.style={circle,inner sep=2,fill=black,draw}]


\dia{0}{0}
\dia{0}{1}
\dia{0}{2}
\dia{1}{-1}
\sla{-1}{0}
\dia{-1}{2}
\sla{1}{0}
\dia{1}{-2}
\dia{1}{-3}
\sla{0}{-2}
\sla{2}{-3}
\dia{2}{-4}
\dia{2}{-5}
\dia{2}{-6}
\sla{1}{-5}
\dia{3}{-6}
\sla{3}{-5}
\end{tikzpicture}

\end{center}
\caption{Two tredoku patterns with $13$ tiles and $8$ leaves, and one tredoku pattern with $17$ tiles $10$ leaves. No leaves are suitable for merging or $2$-leaf extensions.}
\label{fig:sporadic1317}
\end{figure}
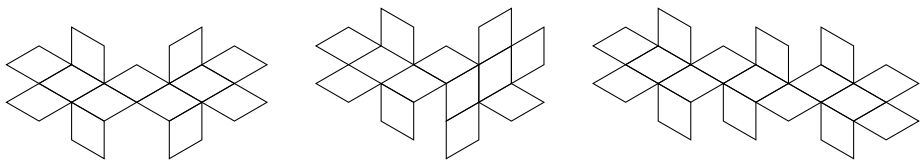

Let $\mathcal{T}$ be a verdant tredoku pattern with $7$ or more tiles. So $\mathcal{T}$  has $\rho$ runs of length $3$, and has $2\rho+1$ tiles and $\rho+2$ leaves where $\rho\geq 3$. Suppose, as an inductive hypothesis, that the set $V$ contains all verdant tredoku pattens with fewer than $2\rho+1$ tiles.

Suppose two $2$-leaf runs intersect. By Corollary~\ref{cor:tredoku_leaves}, these runs intersect in their central tiles. Each of the $5$ tiles in these two runs are either a leaf, or are contained in both these runs. So these $5$ tiles form an edge-connected component. This is a contradiction, since $\mathcal{T}$ is edge-connected and has more than $5$ tiles. So no two $2$-leaf runs intersect.

Suppose a $2$-leaf run $s_1,t,s_2$ intersects the end tile of another run. By Corollary~\ref{cor:tredoku_leaves}, $s_1$ and $s_2$ are leaves and so $t$ is the non-leaf that lies at the end of another run. Each tile $s_i$ shares an edge with $t$, but (since $s_i$ is a leaf) does not share an edge with any other tile in $\mathcal{T}$. Moreover, by Lemma~\ref{lem:not_touch} the tiles $s_i$ do not share a corner with another tile in $\mathcal{T}$, other than those on the common edge with $t$. Define $\mathcal{T}'=\mathcal{T}\setminus\{s_1,s_2\}$. We claim that $\mathcal{T}'$ is a verdant tredoku pattern. First, all runs in the pattern $\mathcal{T}'$ have length $1$ or $3$ (since a run in $\mathcal{T}$ contains none or both of $s_1$ and $s_2$). Secondly, since $s_1$ and $s_2$ are leaves (at the end of their run), $\mathcal{T}'$ is edge-connected. The intersection of the regions defined by $\mathcal{T}\setminus\{s_1\}$ and $\{s_1\}$ is a single edge, which is path connected. Since the region $\mathcal{T}$ is simply connected, Lemma~\ref{lem:simply_connected} shows that $\mathcal{T}\setminus\{s_1\}$ is simply connected. Applying this argument again, we similarly see that $\mathcal{T}\setminus\{s_1,s_2\}$ is simply connected, since $\mathcal{T}\setminus\{s_1,s_2\}$ and $\{s_2\}$ intersect in a single edge.  So the region $\mathcal{T}'$ is simply connected, hence $\mathcal{T}'$ is a weak tredoku pattern. The leaves in $\mathcal{T}'$ are exactly the tile $t$ together with those leaves in $\mathcal{T}$ that lie in $\mathcal{T}'$. So all leaves lie at the end of their runs of length $3$, so $\mathcal{T}'$ is a tredoku pattern by Theorem~\ref{thm:topology}. Our claim now follows, since $\mathcal{T}'$ has $2\rho-1$ tiles, $\rho+1$ leaves and $\rho-1$ runs of length $3$ and so it is verdant. Our inductive hypothesis shows that $\mathcal{T}'$ lies in $V$. So $\mathcal{T}'$ is the second $5$-tile pattern in Figure~\ref{fig:sporadic5} and $\mathcal{T}$ is the $7$ tile pattern in Figure~\ref{fig:sporadic79}. In particular, $\mathcal{T}$ lies in $V$ as required.

We may now assume that no $2$-leaf run intersects the end of another run. In particular, if we choose any run $a,b,c$, at most one $2$-leaf run intersects it (and the intersection is at $b$).

For $i\in\{0,1,2\}$, define $\rho_i$ to be the number of $i$-leaf runs in $\mathcal{T}$. We have $\rho_0+\rho_1+\rho_2=\rho$, and (counting leaves) we have $\rho_1+2\rho_2=\rho+2$. So $2+\rho_0=\rho_2$. Since $\rho_2>\rho_0$, and each $0$-leaf run in $\mathcal{T}$ intersects at most one $2$-leaf run, there exists a $2$-leaf run $a,b,c$ that intersects a $1$-leaf run $d,b,e$. Here $a$ and $c$ are leaves, and without loss of generality $d$ is a leaf.

The tile $e$ lies at the end of the run $d,b,e$, and is not a leaf. Suppose $e$ lies at the end of its second run. Define $\mathcal{T}'=\mathcal{T}\setminus\{a,b,c,d\}$. We claim that $\mathcal{T}'$ is a verdant tredoku pattern. All runs in $\mathcal{T}'$ have length $1$ or $3$, because the same is true for $\mathcal{T}$ and because $d,b,e$ is the only run that intersects $\mathcal{T}'$ non-trivially and is not contained in $\mathcal{T}'$. The tile $e$ is the only tile in $\mathcal{T}'$ that shares an edge with a tile in $\mathcal{T}\setminus\mathcal{T}'$. So an edge-connected path between two tiles in $\mathcal{T}'$ can be modified to avoid $\{a,b,c,d\}$ (by removing tiles $a$, $b$, $c$ and $d$ in a path, and any duplicate copies of $e$).  Since $\mathcal{T}$ is edge-connected, the same is true for $\mathcal{T}'$. Since $a$, $c$ and $d$ are leaves, the intersection of the regions $\mathcal{T}'$ and $\{a,b,c,d\}$ is the common edge of $b$ and $e$, by Lemma~\ref{lem:not_touch}. By Lemma~\ref{lem:simply_connected} we find that $\mathcal{T}'$ is simply connected. This shows that $\mathcal{T}'$ is a weak tredoku pattern. Since $e$ lies at the end of its run in $\mathcal{T}'$, we see that $\mathcal{T}'$ is a tredoku pattern by Theorem~\ref{thm:topology}. Finally, we note that $\mathcal{T}'$ has $2\rho-3$ tiles, $\rho$ leaves and $\rho-2$ runs of length $3$ and so $\mathcal{T}'$ is verdant and our claim follows. So $\{a,b,c,d,e\}$ is also a verdant tredoku pattern, and thus $\mathcal{T}$ is the merging of $\{a,b,c,d,e\}$ and $\mathcal{T}'$ at $e$. Since both $\{a,b,c,d,e\}$ and $\mathcal{T}'$ lie in $V$ by our inductive hypothesis, and since $V$ is closed under merging, we find that $\mathcal{T}$ lies in $V$, as required.

So we may assume that $e$ lies in the middle of its second run $f,e,g$. The situation we are now considering, up to symmetry, is depicted in Figure~\ref{fig:half_way_through}, where $a$, $c$ and $d$ are leaves, but where $f$ and $g$ may not be.

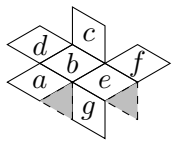
\begin{figure}
\begin{center}
\begin{tikzpicture}[fill=gray!50, scale=0.5,rotate=90,
vertex/.style={circle,inner sep=2,fill=black,draw}]

\ldia{0}{0}{$e$}
\ldia{0}{1}{$b$}
\ldia{0}{2}{$d$}
\ldia{1}{-1}{$f$}
\lsla{-1}{0}{$g$}
\ldia{-1}{2}{$a$}
\lsla{1}{0}{$c$}
\filldraw[dashed] (-0.5,-0.866) -- +(1,0) -- +(0.5,0.866) -- cycle;
\filldraw[dashed] (-0.5,0.866) -- +(1,0) -- +(0.5,0.866) -- cycle;

\end{tikzpicture}
\end{center}
\caption{Part of a verdant tredoku pattern. Here $a$, $c$ and $d$ are leaves, but $f$ and $g$ may not be. The two shaded regions are not covered by the tredoku pattern.}
\label{fig:half_way_through}
\end{figure}

If $f$ and $g$ are both leaves, we see that $\{a,b,c,d,e,f,g\}$ is an edge-connected component and so forms the whole pattern. In this case, our pattern is the $7$ tile pattern in Figure~\ref{fig:sporadic79} and so our pattern lies in $V$, as required.

Suppose $f$ is a leaf, but $g$ is not. The tile $g$ lies at an end of the run $f$, $e$, $g$, and is contained in another run of length $3$. Since $a$ is a leaf, $g$ must lie at the end of this second run. Define $\mathcal{T}'=\mathcal{T}\setminus\{a,b,c,d,e,f\}$. The same argument as above shows that $\mathcal{T}'$ is a tredoku pattern. Indeed, $\mathcal{T}'$ has $2\rho-5$ tiles, $\rho-1$ leaves and $\rho-3$ runs of length $3$ and so is verdant. So $\mathcal{T}$ is the merging of $\mathcal{T}'$ and the $7$ tile verdant tredoku pattern $\{a,b,c,d,e,f,g\}$ at $g$. Our inductive hypothesis implies that $\mathcal{T}'$ and $\{a,b,c,d,e,f,g\}$ lie in $V$. Since $V$ is closed under merging, we find that our pattern lies in $V$, as required.

When $g$ is a leaf, but $f$ is not, the argument is the same as the paragraph above but with $\mathcal{T}'=\mathcal{T}\setminus\{a,b,c,d,e,g\}$ and the merging being at $g$. So in this case, our pattern lies in $V$ as required.

Finally, consider the case when $f$ and $g$ are not leaves. If we remove $e$ from our tiling, the tiles $\{a,b,c,d\}$ form an edge-connected component. Moreover, the tiles $f$ and $g$ are no longer edge-connected. To see this, note that the shaded regions in Figure~\ref{fig:half_way_through} are not covered by $\mathcal{T}$, since $a$ is a leaf and since the run $d$, $b$, $e$ has maximal length $3$. An edge connected path from $f$ to $g$ that avoids $e$ can be completed to a cycle through $e$ that has one of these shaded regions in its interior. This contradicts that fact that $\mathcal{T}$ is simply connected. So removing the tile $e$ produces three components: the set of tiles connected to each of $d$, $f$ and $g$. We note that the argument using shaded regions above shows that the set of tiles edge-connected to $f$ and the set of tiles edge-connected to $g$ form regions that are disjoint (not even touching at a corner).

Let $\mathcal{T}'$ be the set of tiles in $\mathcal{T}$ that are edge-connected to $f$ once we remove $e$ from the tiling. Define $\mathcal{T}''=(\mathcal{T}\setminus\mathcal{T}')\cup\{f\}$. We claim that both $\mathcal{T}'$ and $\mathcal{T}''$ are tredoku patterns. The intersection of the patterns $\mathcal{T}'$ and $\mathcal{T}''$ is exactly the tile $f$, and both patterns are edge-connected. The intersection of $|\mathcal{T}'|$ and $|\mathcal{T}\setminus\mathcal{T}'|$ is the edge shared between tiles $f$ and $e$. Since $\mathcal{T}$ is edge connected, Lemma~\ref{lem:simply_connected} implies that $\mathcal{T}'$ is simply connected. The tile $f$ lies at the end of both of its runs, so the regions defined by $\mathcal{T}''$ and its complement in $\mathcal{T}$ intersect in the edge shared by $f$ and its adjacent tile not equal to $e$. We deduce that $\mathcal{T}''$ is also simply connected, by Lemma~\ref{lem:simply_connected}. The only run of length $3$ involving tiles from $\mathcal{T}'$ that is not contained in $\mathcal{T}'$ is $f,e,g$, so all runs in $\mathcal{T}''$ have length $1$ or $3$.  Similarly, the only run of length $3$ in $\mathcal{T}''$ that is not contained in $\mathcal{T}''$ is the run involving $f$ but not $e$. So all runs in $\mathcal{T}''$ have length $1$ or $3$. This shows that both $\mathcal{T}'$ and $\mathcal{T}''$ are weak tredoku patterns. Theorem~\ref{thm:topology} now implies that $\mathcal{T}'$ and $\mathcal{T}''$ are tredoku patterns, as claimed. Hence $\mathcal{T}$ is the merging of the tredoku patterns $\mathcal{T}'$ and $\mathcal{T}''$ at $f$.

We claim that both $\mathcal{T}'$ and $\mathcal{T}''$ are verdant. Suppose $\mathcal{T}$ contains $\tau$ tiles and has $\ell$ leaves, that  $\mathcal{T}'$ contains $\tau'$ tiles and has $\ell'$ leaves, and $\mathcal{T}''$ contains $\tau''$ tiles and has $\ell''$ leaves. Since $\mathcal{T}'$ and $\mathcal{T}''$ intersect in the single tile $f$, we have $\tau=\tau'+\tau''-1$. Since $f$ is a leaf in both $\mathcal{T}'$ and $\mathcal{T}''$ but is not a leaf in $\mathcal{T}$, we have $\ell=\ell'+\ell''-2$. Moreover, since $\mathcal{T}$ is verdant, we have $\ell=\lceil\tau/2\rceil +1$ and $\tau$ is odd. Suppose, for a contradiction, that $\mathcal{T}'$ is not verdant and so $\ell'<\lceil\tau'/2\rceil +1$. Then
\begin{align*}
\ell''&=\ell-\ell'+2\\
&>\lceil \tau/2\rceil-\lceil\tau'/2\rceil+2\\
&\geq (\tau-\tau')/2+2\text{, since $\tau$ is odd,}\\
&=\tau''/2+3/2\\
&\geq \lceil \tau''/2\rceil+1.
\end{align*}
But this contradicts Theorem~\ref{thm:main}(i) (for the tredoku pattern $\mathcal{T}''$). Hence $\mathcal{T}'$ is verdant. The pattern $\mathcal{T}''$ is verdant by the same argument, so our claim follows. 

So we see that $\mathcal{T}$ is the merging of verdant tredoku patterns $\mathcal{T}'$ and $\mathcal{T}''$. Our inductive hypothesis, together with the fact that $V$ is closed under merging, shows that $\mathcal{T}$ lies in $V$. And so the theorem follows.
\end{proof}

\begin{corollary}
\label{cor:sporadic}
There is no $\tau$ tile, $\ell$ leaf tredoku pattern with $\rho$ runs of length $3$ when $(\tau,\rho,\ell)=(15,7,9)$ or when $(\tau,\rho,\ell)=(2\rho+1,\rho,\rho+2)$ with $\rho\geq 9$.
\end{corollary}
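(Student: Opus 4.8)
The plan is to recognise that both families of parameters describe \emph{verdant} tredoku patterns, and then simply to read off the answer from the classification in Theorem~\ref{thm:verdant}. First I would verify that each parameter triple satisfies the defining relations of verdancy, namely $\tau=2\rho+1$ and $\ell=\rho+2$. For the triple $(15,7,9)$, note that $15=2\cdot 7+1$ and $9=7+2$, so any tredoku pattern with these parameters is verdant with $\rho=7$, and hence has $\tau=15$ tiles. For the family $(2\rho+1,\rho,\rho+2)$ with $\rho\geq 9$, the relations $\tau=2\rho+1$ and $\ell=\rho+2$ hold by construction, so any such pattern is verdant with $\tau=2\rho+1\geq 19$ tiles.

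Next I would invoke Theorem~\ref{thm:verdant}, which asserts that, up to symmetry, every verdant tredoku pattern is one of those listed in Figures~\ref{fig:sporadic5} to~\ref{fig:sporadic1317}, and in particular has $\tau\in\{5,7,9,11,13,17\}$ tiles (equivalently $\rho\in\{2,3,4,5,6,8\}$). Since $15\notin\{5,7,9,11,13,17\}$, no verdant tredoku pattern has $15$ tiles; and since $2\rho+1\geq 19>17$ whenever $\rho\geq 9$, no verdant tredoku pattern has $2\rho+1$ tiles with $\rho\geq 9$. Combining these two observations with the fact that patterns with the stated parameters would necessarily be verdant, I conclude that no tredoku pattern with either set of parameters exists.

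There is essentially no obstacle here: the whole force of the corollary lies in the enumeration carried out in Theorem~\ref{thm:verdant}, and all that remains is the bookkeeping observation that the values $\rho=7$ (giving $\tau=15$) and $\rho\geq 9$ (giving $\tau\geq 19$) are exactly the values absent from the list $\{2,3,4,5,6,8\}$ of $\rho$ that actually occur. Thus the corollary is an immediate consequence of the classification.
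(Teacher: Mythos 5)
Your proof is correct and is exactly the paper's route: the paper states Corollary~\ref{cor:sporadic} as an immediate consequence of Theorem~\ref{thm:verdant}, relying on precisely the observation you make, namely that both parameter families satisfy $\tau=2\rho+1$ and $\ell=\rho+2$ (hence are verdant) while the classification only admits $\tau\in\{5,7,9,11,13,17\}$.
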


\section{Some unachievable small parameters}
\label{sec:do_not_exist}

In this section, we compete the proof of Theorem~\ref{thm:main} by showing that no tredoku pattern exists with $\tau$ tiles, $\rho$ runs of length $3$, and $\ell$ leaves when
\[
(\tau,\rho,\ell)\in \{(3,1,3),(3,2,0),(4,2,2),(5,3,1),(6,4,0),(12,8,0)\}.
\]

We begin the section by defining some terminology. First, by a \emph{run} in a pattern we mean a run of length $3$. Second, we assign a type to each tile in our pattern as follows.
\begin{definition}
Each tile in our pattern is the union of two equilateral triangles joined by a common edge. Two tiles have the same \emph{type} when these edges are parallel. In Figure~\ref{fig:example} the types of tile are shaded differently: We say that the unshaded tiles are of \emph{top} type, the darkly shaded tiles are of \emph{left} type and the lightly shaded tiles are of \emph{right} type.
\end{definition}
These names match the apparent direction of the face corresponding to the tile, when we think as the shaded tiles representing a three-dimensional surface made from cubes, with light coming from above and from the right. For future reference, Figure~\ref{fig:topleftright} depicts an example of each tile type, without shading.

\begin{figure}
\begin{center}
\begin{tikzpicture}[fill=gray!50, scale=0.5,rotate=90,
vertex/.style={circle,inner sep=2,fill=black,draw}]

\dia{-3}{10}
\bsla{1}{3}
\sla{-1}{6}

\end{tikzpicture}
\end{center}
\caption{Top, left and right tiles.}
\label{fig:topleftright}
\end{figure}

Finally we define the direction of a run. Note that the common edges of adjacent tiles in a run are parallel, running in one of three directions. Since each type of tile has edges of two of the three possible directions, any run contains tiles of at most two types.
\begin{definition}
Let $a,b,c$ be a run.
\begin{itemize}
\item The run has \emph{direction $LR$} if each of the tiles $a$, $b$ and $c$ are either of left or of right type, and the common edges between adjacent tiles run north--south. We call such a run an $LR$-run.
\item The run has \emph{direction $RT$} if each of the tiles $a$, $b$ and $c$ are either of right or of top type, and the common edges between adjacent tiles run southwest--northeast. We call such a run an $RT$-run.
\item The run has \emph{direction $LT$} if each of the tiles $a$, $b$ and $c$ are either of left or of top type, and the common edges between adjacent tiles run northwest--southeast. We call such a run an $LT$-run.
\end{itemize}
\end{definition}
Note that every run (even a run consisting of just one type of tile) has exactly one direction. The following lemma is not difficult to prove, and we use it below without comment:
\begin{lemma}
\label{lem:run} Let $\mathcal{T}$ be a pattern.
\begin{itemize}
\item[(i)] The runs in $\mathcal{T}$ in any fixed direction are pairwise disjoint.
\item[(ii)] Any distinct runs in $\mathcal{T}$ are either disjoint, or intersect in a single tile.
\end{itemize}
\end{lemma}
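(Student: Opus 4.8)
The plan is to exploit a single structural fact about a diamond tile: its four boundary edges fall into exactly two of the three edge-directions of the triangular lattice (two parallel edges in each of those directions), while its internal diagonal points in the remaining, third direction. Consequently a tile can be edge-adjacent to another tile only across an edge in one of its two boundary-directions, so it lies in exactly one run of each of these two directions (this is the ``every tile is contained in two runs'' remark made earlier), and the direction of a run through a tile never coincides with the direction of that tile's diagonal.

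For part (i) I would fix a direction $D$ and form the auxiliary graph $H_D$ whose vertices are the tiles of $\mathcal{T}$ having edges in direction $D$, with two tiles joined when they share such an edge. Since the two $D$-edges of a tile are opposite sides of the rhombus, every vertex of $H_D$ has degree at most $2$, and its (at most two) neighbours lie on opposite sides. To rule out cycles I would introduce the signed distance $y_D$ in the direction perpendicular to $D$: crossing a $D$-edge to an adjacent tile changes $y_D$ by a fixed nonzero step (the width of a tile measured perpendicular to $D$, which is the same for all tiles since they are congruent), so $y_D$ is strictly monotone along any $D$-adjacency walk and no cycle can close up. Hence each connected component of $H_D$ is a finite simple path, and a run in direction $D$ is exactly a maximal such path, i.e.\ a component of $H_D$. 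Distinct components are vertex-disjoint, which is precisely the assertion that runs in a fixed direction are pairwise disjoint.

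For part (ii) I first reduce to the interesting case: two distinct runs of the \emph{same} direction are disjoint by part (i), so I may assume $R_1$ and $R_2$ have different directions $D_1\neq D_2$. Any tile common to $R_1$ and $R_2$ has edges in both $D_1$ and $D_2$, hence is a tile of the unique type whose diagonal lies in the third direction $D_3$. Suppose, for contradiction, that $R_1$ and $R_2$ shared two distinct tiles $s$ and $t$, and consider the two sub-paths $P_1\subseteq R_1$ and $P_2\subseteq R_2$ joining $s$ to $t$. The crucial auxiliary computation is that the coordinate $y_{D_3}$ (the transverse distance in the direction perpendicular to $D_3$) is \emph{strictly} monotone along any run whose direction is $D_1$ or $D_2$; granting this, after orienting so that $y_{D_3}$ increases from $s$ to $t$, both $P_1$ and $P_2$ are strictly increasing staircases between the same two extreme tiles. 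Joining them yields a simple closed chain of non-overlapping tiles, and the contradiction is obtained by examining the bounded region it encloses: since $P_1$ and $P_2$ are built from edges in the two different directions $D_1,D_2$, this region cannot be filled by the chain itself, and closing it up forces two tiles of $\mathcal{T}$ to overlap (or forces a vertex of degree $\geq 3$ in one of the graphs $H_{D_i}$, contradicting the degree bound from part (i)). So $R_1$ and $R_2$ meet in at most one tile.

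The main obstacle is exactly this last step of part (ii). Monotonicity by itself is not enough: each of $P_1$ and $P_2$ can be monotone in $y_{D_3}$, so no coordinate inequality alone forbids a shared pair. The contradiction has to come from the non-overlapping of the tiles together with the planarity of the enclosed region, and because the lemma is asserted for an \emph{arbitrary} pattern (which may have holes), I cannot simply invoke simple-connectivity of $|\mathcal{T}|$; the argument must stay local. Making this planar overlap argument precise---ideally by choosing $s,t$ to be a closest such pair and analysing the two tiles adjacent to $s$ along $P_1$ and $P_2$---is the only genuinely delicate point, the remainder being the routine monotonicity and degree bookkeeping sketched above.
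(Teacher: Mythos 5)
Your part (i) is sound, and for context you should know the paper offers no proof of this lemma at all (it is declared ``easy to prove'' and used without comment), so everything rides on whether your argument actually closes. It does not: part (ii) has a genuine gap, and you say so yourself. Two concrete problems. First, your ``crucial auxiliary computation'' is false under your own definition of $y_D$. You defined $y_D$ as the signed distance measured \emph{perpendicular} to $D$; but along a run of direction $D_1$, each tile crossed translates the shared edge by that tile's transverse edge vector, which is $\pm D_2$ for a tile of type $\{D_1,D_2\}$ and $\pm D_3$ for a tile of type $\{D_1,D_3\}$. The $\pm D_3$ steps are parallel to $D_3$, so they leave $y_{D_3}$ unchanged: $y_{D_3}$ is only \emph{weakly} monotone along a $D_1$-run (strict monotonicity holds only if you instead project onto $D_3$, i.e.\ measure along $D_3$ rather than perpendicular to it). Second, even granting monotonicity, the endgame --- joining $P_1$ and $P_2$ into a closed chain and extracting a contradiction from the enclosed region --- is only gestured at. Since the lemma is asserted for arbitrary patterns, the enclosed region could simply be a hole, and nothing in your sketch pins down what actually fails; you flag this step as ``the only genuinely delicate point,'' which is an admission that the proof is incomplete exactly where the content should be.

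The irony is that the coordinate bookkeeping you dismiss (``no coordinate inequality alone forbids a shared pair'') does finish the proof, with less machinery than you set up. Choose unit vectors $D_1,D_2,D_3$ along the three edge directions with signs normalized so that $D_2=D_1+D_3$. Suppose $R_1$ (direction $D_1$) and $R_2$ (direction $D_2$) both contain distinct tiles $s$ and $t$; as you noted, $s$ and $t$ are both translates of the rhombus spanned by $D_1$ and $D_2$. Walk along $R_1$ from $s$ to $t$: every tile crossed, \emph{including $s$ itself}, shifts the shared $D_1$-edge by its transverse vector, and by your part~(i) these are all consistently oriented; flipping all signs if necessary, they lie in $\{D_2,D_3\}$, and $s$ contributes $D_2$, so $t-s=aD_2+bD_3$ with integers $a\geq 1$, $b\geq 0$. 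Walking along $R_2$ from $s$ to $t$ the same way gives $t-s=\varepsilon(cD_1-dD_3)$ with $c\geq 1$, $d\geq 0$, $\varepsilon=\pm1$, since $D_1$ and $-D_3$ are the two transverse vectors sharing a common orientation across $D_2$-edges. Now substitute $D_2=D_1+D_3$ and use linear independence of $D_1,D_3$: if $\varepsilon=+1$ then $a+b=-d$, impossible as $a\geq1$ and $d\geq 0$; if $\varepsilon=-1$ then $a=-c$, impossible as $a,c\geq1$. So two runs in different directions share at most one tile --- no closest pair, no closed chain, no topology needed. The missing idea in your write-up is precisely that the common tile $s$ forces a $D_2$-step in one sum and a $\pm D_1$-step in the other, which is what makes the two displacement expressions irreconcilable.
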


The next lemma shows that various small parameter sets are not associated with a tredoku pattern.

\begin{lemma}
\label{lem:v_small}
When 
\[
(\tau,\rho,\ell)\in \{(3,1,3),(3,2,0),(4,2,2),(5,3,1),(6,4,0)\},
\]
no tredoku pattern $\mathcal{T}$ exists with $\tau$ tiles, $\rho$ runs of length $3$, and $\ell$ leaves.
\end{lemma}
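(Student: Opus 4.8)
The plan is to reduce all five cases to a single combinatorial count on an auxiliary graph whose vertices are the runs of length $3$, peeling off two parameter sets that escape the count and need a small separate argument. First I would record the incidence structure forced by the definitions: every tile lies in exactly two runs, a leaf lies in exactly one run of length~$3$, and a non-leaf lies in two runs of length~$3$ (which are necessarily distinct, as the two runs through a tile use its two different edge-directions). I would encode this by forming a graph $G$ on the vertex set of the $\rho$ runs of length~$3$, placing an edge between two runs for each non-leaf tile they have in common. By Lemma~\ref{lem:run}(ii) two distinct runs share at most one tile, so $G$ has no multiple edges; it is therefore a \emph{simple} graph with exactly $\tau-\ell$ edges, one per non-leaf.

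The heart of the matter is then the elementary inequality $\tau-\ell\le\binom{\rho}{2}$, which every simple graph on $\rho$ vertices must satisfy. I would simply verify this against the given tuples: for $(3,2,0)$, $(4,2,2)$ and $(5,3,1)$ the number of non-leaves is $3$, $2$ and $4$ respectively, while $\binom{\rho}{2}$ equals $1$, $1$ and $3$. In each case the number of edges strictly exceeds the maximum possible, so no simple graph $G$ exists, and hence no such tredoku pattern exists.

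Two parameter sets are not ruled out by this count and must be treated directly. For $(3,1,3)$ the pattern is a single run of three tiles, and its central tile is a leaf lying at the centre of its run of length~$3$, which is forbidden by Corollary~\ref{cor:tredoku_leaves}. For $(6,4,0)$ we have $\tau-\ell=6=\binom{4}{2}$, so the inequality is tight and $G$ is forced to be the complete graph $K_4$; in particular every pair of the four runs shares a tile. Here I would invoke the geometry: two runs pointing in the same direction are disjoint by Lemma~\ref{lem:run}(i), so the four runs would have to occupy four pairwise-distinct directions. Since there are only three possible run-directions, this is impossible. This last contradiction, where pure edge-counting is no longer enough and one must use the bound of three directions together with Lemma~\ref{lem:run}(i), is the main obstacle and precisely the reason $(6,4,0)$ is the delicate case among the five.
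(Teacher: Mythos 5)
Your proof is correct, and it takes a genuinely different route from the paper's, which treats each tuple with a bespoke argument: $(3,1,3)$ by exhibiting the Condition-4 violation directly (removing the middle tile disconnects the pattern); $(3,2,0)$ and $(4,2,2)$ by the union bound (two runs meeting in at most one tile need at least $5$ tiles); $(5,3,1)$ by a direction/type analysis (one run per direction, then two same-type tiles in a leafless run would require two distinct runs in the same direction); and $(6,4,0)$ by noting that two parallel runs partition the six tiles, so any further run would coincide with one of them. Your single inequality $\tau-\ell\le\binom{\rho}{2}$ --- non-leaves inject into unordered pairs of length-$3$ runs, injectivity coming from Lemma~\ref{lem:run}(ii) --- disposes of the three middle cases at once, and in particular turns the paper's fiddliest case, $(5,3,1)$, into a one-line count. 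Your two residual cases are also sound: for $(3,1,3)$, invoking Corollary~\ref{cor:tredoku_leaves} is legitimate (it is established in Section~\ref{sec:topology}, before this lemma, so there is no circularity) and is just a packaged form of the paper's direct disconnection argument; for $(6,4,0)$, the forced $K_4$ against the pigeonhole on three run directions and Lemma~\ref{lem:run}(i) is the same mechanism as the paper's partition argument in different clothing. What your route buys is uniformity and a reusable constraint: nothing in the proof of $\tau-\ell\le\binom{\rho}{2}$ (or in your $(6,4,0)$ argument) uses Condition 4, so these exclusions hold verbatim for weak tredoku patterns, consistent with Theorem~\ref{thm:main_weak}(ii) --- and $(3,1,3)$, the one tuple that \emph{is} realised by a weak pattern, is exactly the one place where you must appeal to Condition 4 via Corollary~\ref{cor:tredoku_leaves}. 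What the paper's route buys is self-containedness in that case: its argument for $(3,1,3)$ needs only the definition of a tredoku pattern, not the topological machinery behind Theorem~\ref{thm:topology}.
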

\begin{proof}
Suppose $(\tau,\rho,\ell)=(3,1,3)$. In this case, $\mathcal{T}$ must consist of a single run, consisting entirely of leaves. But this pattern can be disconnected by removing its middle tile, and so is not a tredoku pattern. (This pattern is a weak tredoku pattern.)  So the lemma follows in this case.

Now suppose $(\tau,\rho,\ell)=(3,2,0)$ or $(\tau,\rho,\ell)=(4,2,2)$. Any two distinct runs of length $3$ in a tredoku pattern can intersect in at most one tile. So when $\rho=2$ we must have $\tau\geq 5$. This contradiction shows that the lemma holds in this case.

Suppose that $(\tau,\rho,\ell)=(5,3,1)$, so the pattern contains a single leaf. If there are two runs of length $3$ in $\mathcal{T}$ in the same direction, then (since runs in the same direction cannot share a tile) $\tau\geq 6$ and we have a contradiction. So the pattern has exactly one run in each direction. Consider a run $a,b,c$ that does not contain the leaf. (Such a run exists, as the pattern has three runs, and the leaf lies in just one of these.) The run $a,b,c$ contains tiles that are of at most two types, so two of the tiles in $a,b,c$, say $x$ and $y$, are of the same type. Each of $x$ and $y$ is not a leaf, so lies in the run $a,b,c$ and another run $r_x$ and $r_y$ respectively. We see that $r_x$ and $r_y$ cannot be the same run, since the only run intersecting $a,b,c$ in two or more positions $x$ and $y$ is $a,b,c$ itself. The runs $r_x$ and $r_y$ are not in the same direction as $a,b,c$, since $r_x$ and $r_y$ intersect $a,b,c$ non-trivially. Hence $r_x$ and $r_y$ have the same direction, since $x$ and $y$ have the same type and each type of tile can only lie in runs of two of the three directions. But our pattern has exactly one run in each direction, and so we have a contradiction. So the lemma also follows in this case.

Finally, suppose that $(\tau,\rho,\ell)=(6,4,0)$. There are $4$ runs of length $3$, so there are at least two runs $r_1$ and $r_2$ that have the same direction. These runs are disjoint, and so together form a partition of the $6$ tiles in the pattern. Any other run $r$ shares at least two elements with one of $r_1$ or $r_2$, and so is equal to one of $r_1$ and $r_2$. This contradicts the fact that we have $4$ runs. Hence the lemma follows.
\end{proof}

We now turn our attention to the final set $(12,8,0)$ of parameters. For the rest of this section, $\mathcal{T}$ will be a tredoku patten with $12$ tiles, $8$ runs and $0$ leaves.

\begin{lemma}
\label{lem:12_1}
The tiling $\mathcal{T}$ above has $6$ tiles of one type, and $3$ tiles of each of the other two types. So, by rotating the pattern if necessary, we may assume that $\mathcal{T}$ contains $6$ top tiles, $3$ left tiles and $3$ right tiles.
\end{lemma}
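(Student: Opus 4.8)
The plan is to translate the hypotheses $\tau=12$, $\rho=8$, $\ell=0$ into congruences on the number of tiles of each type, reduce to four candidate type-distributions, and then eliminate the three degenerate ones on geometric grounds.

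First I would set up the bookkeeping. Write $t$, $l$, $r$ for the number of top, left and right tiles, so $t+l+r=12$, and write $\rho_{LR}$, $\rho_{RT}$, $\rho_{LT}$ for the numbers of runs (of length $3$) in the three directions, so $\rho_{LR}+\rho_{RT}+\rho_{LT}=8$. Since $\mathcal{T}$ is connected with at least three tiles no tile is isolated, so $\ell=0$ forces every tile to lie in exactly two runs of length $3$; by Lemma~\ref{lem:run} these cannot point in the same direction, so they point in the two distinct directions available to the tile's type. A left tile therefore lies in exactly one $LR$-run and one $LT$-run (and similarly for the other types), and counting tile--run incidences by direction gives
\[
3\rho_{LR}=l+r,\qquad 3\rho_{RT}=r+t,\qquad 3\rho_{LT}=l+t.
\]
In particular each of $l+r$, $r+t$, $l+t$ is a multiple of $3$; adding two of these and subtracting the third shows $2t$, $2l$, $2r$ are all multiples of $3$, hence so are $t$, $l$, $r$. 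Thus $\{t,l,r\}$ is, as a multiset, one of $\{12,0,0\}$, $\{9,3,0\}$, $\{6,6,0\}$ or $\{6,3,3\}$. Since a positive multiple of $3$ is at least $3$, the last case is the only one in which all three types occur, so it suffices to prove that every type occurs.

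Next I would dispose of the single-type case $\{12,0,0\}$, which I expect to be clean. If all twelve tiles share a type they are mutual translates, so they occupy the points of a sublattice; this gives a grid with two axes corresponding to the tile's two run-directions. Because $\ell=0$, every tile lies in a length-$3$ run along each axis, so every maximal line of occupied points, in either axis, has length exactly $3$; in particular each occupied row and each occupied column holds a positive multiple of $3$ of the twelve points, so there are at most four occupied rows and at most four occupied columns. A short case analysis on the number of occupied rows forces, in each case, either a row whose length is not a multiple of $3$ or a $4\times4$ configuration whose rows cannot consist of three \emph{consecutive} points; either way the run-length condition is violated. (The largest single-type configuration surviving this count has nine tiles, the $3\times3$ block, which is why $\tau=12$ is essential here.)

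The remaining, and main, obstacle is to rule out the two-type distributions $\{9,3,0\}$ and $\{6,6,0\}$, where exactly one type is absent. Here the two surviving types share a single run-direction: along that direction the runs are \emph{mixed}, while along each of the other two directions the runs are \emph{pure} (monochromatic), so each present type forms a sublattice all of whose runs in one of its two directions have length $3$. My plan is to exploit the rigidity of two-type lozenge tilings: with one type missing there is no flippable hexagon, so the tiling is determined by its region, which (via the Thurston height function) is a ``ramp'' swept out by a lattice path having no step in the missing direction. I would then run a count parallel to the single-type case: the pure runs pin the width of the ramp to three tiles, while the requirement that the mixed runs also have length $3$, together with connectedness and simple connectivity (Condition~3, used through Lemma~\ref{lem:not_touch}), over-constrains the shape so that no configuration with twelve tiles remains. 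Carrying out this last step carefully — tracking the interaction between the pure and mixed runs and the simple-connectivity constraint — is where the real work lies; once it is done only $\{t,l,r\}=\{6,3,3\}$ survives, which is the assertion of the lemma.
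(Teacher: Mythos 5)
Your counting step is sound and matches the paper's: with $\ell=0$ and no isolated tiles, every tile lies in two runs of length $3$ in the two directions available to its type, so $3\rho_{LR}=l+r$, etc., forcing $t,l,r$ to be multiples of $3$ and reducing to the four multisets $\{12,0,0\}$, $\{9,3,0\}$, $\{6,6,0\}$, $\{6,3,3\}$. Your elimination of $\{12,0,0\}$ by the segment-length argument on the sublattice is also essentially correct (and can be completed along the lines you sketch). The problem is the two-type cases $\{9,3,0\}$ and $\{6,6,0\}$: here you do not give an argument at all, but only a plan, and you say yourself that carrying it out ``is where the real work lies.'' That is precisely the content of the lemma that must be proved, so as it stands the proposal has a genuine gap. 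Moreover, the tool you reach for is misdirected: the height-function/rigidity fact says that a simply connected region tileable by only two lozenge types has a \emph{unique} such tiling, but nothing here fixes the region in advance --- you must rule out the \emph{existence} of any $12$-tile configuration, and uniqueness of tilings of a hypothetical region yields no contradiction by itself. The constraint that actually does the work is the run structure, which your plan only gestures at.

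For comparison, the paper handles all three degenerate multisets uniformly and concretely. Suppose (after rotating) there are no top tiles. Then there are exactly $(l+r)/3=4$ $LR$-runs and hence $4$ runs in the $LT$/$RT$ directions, and each of the latter is \emph{pure}: an $LT$-run is three left tiles in a straight line, an $RT$-run three right tiles. Two such parallel pure runs cannot sit directly one above the other, since their six tiles would lie in six distinct $LR$-runs, requiring $18$ tiles. Now take the left-most pure run $r_1$: its three tiles are the left ends of three distinct $LR$-runs; the three central tiles of those $LR$-runs must form a single pure run $r_2$ (by the stacking restriction), and the three right-hand tiles a single pure run $r_3$. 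The nine tiles of $r_1\cup r_2\cup r_3$ then have both of their runs inside this set, so they form a leaf-free edge-connected component --- contradicting edge-connectivity of the $12$-tile pattern. If you replace your height-function plan with this run-structure argument (which also subsumes your $\{12,0,0\}$ case, making the separate lattice analysis unnecessary), your proof closes.
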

\begin{proof}
Let $n_\ell$, $n_r$ and $n_t$ be the number of left, right and top tiles respectively, so $n_\ell+n_r+n_t=12$. Since there are no leaves in the pattern,  there are exactly $n_{\ell}+n_r$ tiles in $LR$-runs. But these runs are disjoint, and each run contains $3$ tiles. So $n_{\ell}+n_r$ is divisible by $3$. Hence $n_t=12-(n_\ell+n_r)$ is divisible by 3. By counting runs in the other two directions in a similar way, we deduce that $n_\ell$ and $n_r$ are also divisible by $3$.

Suppose, for a contradiction, that there are no tiles of one type. Without loss of generality, we may assume that $n_t=0$. There are $(n_\ell+n_r)/3=4$ $LR$-runs, so there are $8-4=4$ runs in the other two directions $LT$ and $RT$. Each $LT$-run consists entirely of left tiles (as there are no top tiles). Similarly, each $RT$-run consists entirely of right tiles. So each of the $4$ runs not in the direction $LR$ takes one of the following two forms:
\begin{center}
\begin{tikzpicture}[fill=gray!50, scale=0.5,rotate=90,
vertex/.style={circle,inner sep=2,fill=black,draw}]

\sla{0}{0}
\sla{1}{0}
\sla{2}{0}

\end{tikzpicture}\hspace{1cm}
\begin{tikzpicture}[fill=gray!50, scale=0.5,rotate=90,
vertex/.style={circle,inner sep=2,fill=black,draw}]

\node(A) at (0,0){};
\node(B) at (1.5,0){or};
\end{tikzpicture}\hspace{1cm}
\begin{tikzpicture}[fill=gray!50, scale=0.5,rotate=90,
vertex/.style={circle,inner sep=2,fill=black,draw}]

\bsla{0}{0}
\bsla{1}{0}
\bsla{2}{0}

\end{tikzpicture}
\end{center}
One of these runs cannot directly be above the other, as these $6$ tiles would lie in distinct $LR$-runs, and there are fewer than $6\times 3=18$ tiles. Now consider the left-most $LT$- or $RT$-run $r_1$. Its $3$ tiles are each at the left end of different $LR$-runs. The central tiles of these $LR$-runs must lie in a single $LT$ or $RT$-run $r_2$, since we cannot have two such runs above or below the other. Similarly, the right-hand tiles of these $LR$-runs lie in a single $LT$- or $RT$-run $r_3$. Let $\mathcal{T}'$ consist of tiles in $r_1$, $r_2$ and $r_3$. Each tile in $\mathcal{T}'$ lies in two runs that are entirely contained in $\mathcal{T}'$. So $\mathcal{T}'$ has no leaves: it is an edge-connected component of $\mathcal{T}$. We have a contradiction, since $\mathcal{T}$ is edge-connected and contains $12$ tiles. So we cannot have $0$ tiles of any fixed type.

We have that $n_\ell+n_r+n_t=12$ and each summand is a positive multiple of $3$. So (rotating the pattern if necessary) we may assume that $n_\ell=n_r=3$ and $n_t=6$. This proves the lemma.
\end{proof}

\begin{lemma}
\label{lem:12_2}
Assume that the pattern $\mathcal{T}$ above has $6$ top tiles, $3$ left tiles and $3$ right tiles. By reflecting the pattern in a vertical line if necessary, we may assume that all $LT$-runs contain $2$ top tiles and $1$ left tile.
\end{lemma}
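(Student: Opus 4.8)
The plan is to reduce the statement to a single structural claim and then dispatch that claim by a short count together with one genuinely geometric case. First I would record the run count forced by Lemma~\ref{lem:12_1}: since there are no leaves, every tile lies in exactly two runs, with a left tile in one $LR$-run and one $LT$-run, a right tile in one $LR$-run and one $RT$-run, and a top tile in one $LT$-run and one $RT$-run. Counting tiles by type, the $6$ non-top tiles fill $2$ $LR$-runs, the $3$ left and $6$ top tiles fill $3$ $LT$-runs, and the $3$ right and $6$ top tiles fill $3$ $RT$-runs (total $2+3+3=8$, as it should be). Thus the three left tiles are distributed over the three $LT$-runs with multiplicities summing to $3$, and similarly the right tiles over the $RT$-runs. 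Reflection in a vertical line fixes the top type, interchanges the left and right types, and interchanges the $LT$- and $RT$-directions; hence the left-distribution of the reflected pattern over its $LT$-runs coincides with the right-distribution of the original over its $RT$-runs. Since a distribution of three tiles over three runs is $(1,1,1)$ precisely when no run consists solely of top tiles, the lemma is equivalent to the claim that $\mathcal{T}$ cannot contain \emph{both} an all-top $LT$-run and an all-top $RT$-run.

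To prove this claim I would first note, using Lemma~\ref{lem:run}(ii), that two distinct top tiles cannot share both their $LT$-run and their $RT$-run, so the map sending a top tile to the pair (its $LT$-run, its $RT$-run) is injective. Suppose $R$ is an all-top $LT$-run, with a hypothetical all-top $RT$-run $S$ also present. The three tiles of $R$ lie in three pairwise distinct $RT$-runs. If $S$ were disjoint from $R$, then none of the tiles of $R$ would lie in $S$, so their three $RT$-runs would be three distinct runs different from $S$; but there are only two such runs, a contradiction. Hence any all-top $RT$-run $S$ coexisting with $R$ must meet $R$ in exactly one top tile $t_0$.

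The remaining, and main, obstacle is to rule out this shared configuration, and here the work is local at $t_0$. All four edges of $t_0$ abut top tiles --- two arms of $R$ across its northwest--southeast edges and two arms of $S$ across its southwest--northeast edges --- so the five tiles of $R\cup S$ form a plus-shape that leaves an uncovered unit triangle at each of the two $60^\circ$ corners of $t_0$. I would then follow the forced structure: the two $RT$-runs other than $S$ each contain one arm-tile of $R$, the two $LT$-runs other than $R$ each contain one arm-tile of $S$, and the single remaining top tile together with the three left and three right tiles must complete these four runs within the twelve-tile budget. Tracking how the left and right tiles attach to these arms --- using Lemma~\ref{lem:not_touch} to forbid corner-only contacts and Lemma~\ref{lem:run}(i) to keep same-direction runs disjoint --- I expect to show that every completion either creates a run of length at least $4$, disconnects the pattern, or encloses one of the uncovered notch triangles, contradicting edge-connectivity or simple-connectivity (the latter verified through Lemma~\ref{lem:simply_connected}). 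This case bookkeeping, rather than the counting, is where the real difficulty lies. Once the claim is established, the lemma follows by reflecting in a vertical line exactly when the original family of $LT$-runs contains an all-top run.
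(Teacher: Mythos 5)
Your set-up coincides exactly with the paper's: the count of runs (two $LR$-runs, three $LT$-runs, three $RT$-runs), the reduction of the lemma to the claim that $\mathcal{T}$ cannot contain \emph{both} an all-top $LT$-run and an all-top $RT$-run, and the argument that two such runs must intersect (the three tiles of the all-top $LT$-run lie in three distinct $RT$-runs, so one of these must be the all-top $RT$-run) are all the same as in the published proof. Up to that point your argument is correct.

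The genuine gap is the sentence ``All four edges of $t_0$ abut top tiles \dots\ so the five tiles of $R\cup S$ form a plus-shape.'' Nothing forces the common tile $t_0$ to be the \emph{middle} tile of either run: two runs intersect in a single tile, but that tile may sit at an end of $R$, at an end of $S$, or at an end of both. The paper's Figure~\ref{fig:two_tops} enumerates the possibilities up to reflection and there are four of them: two distinct end--end configurations (chevrons), an end--middle configuration, and the middle--middle plus-shape, which is the only one you treat. Moreover, in the paper's analysis the plus-shape case is one of the quickest to dispose of; the bulk of the case bookkeeping (the fourth case, with its sub-cases on the auxiliary tiles labelled $s$ and $t$) lives precisely in the configurations your assumption erases. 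Finally, even in the one case you do consider, what you give is a plan rather than a proof (``I expect to show \dots''); the contradiction the paper actually reaches in every case is concrete --- the partial pattern already meets three or four pairwise disjoint $LR$-runs while $\mathcal{T}$ contains only two --- and this has to be exhibited configuration by configuration. As it stands the proposal does not prove the lemma: you must either enumerate and dispatch all four intersection types, or find an argument that genuinely does not depend on where $t_0$ sits within the two runs.
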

\begin{proof}

There are $9$ tiles of top or left type in $\mathcal{T}$. Since each $LT$-run contains $3$ such tiles, and since $LT$-runs are disjoint we see that there are $3$ $LT$-runs in $\mathcal{T}$. Similarly, there are $3$ $RT$-runs and $2$ $LR$-runs in $\mathcal{T}$.

If there are no $LT$-runs consisting entirely of top tiles, then (since the $6$ top tiles must be distributed amongst the $3$ $LT$-runs) the lemma follows. The same is true (after a reflection in a vertical line) if there are no $RT$-runs consisting entirely of top tiles. So we may assume there is an $LT$-run $a,b,c$ and an $RT$-run $d,e,f$ in $\mathcal{T}$ that consist entirely of top tiles. We will aim to derive a contradiction, which proves the lemma.

The tiles $a$, $b$ and $c$ lie in $3$ distinct $RT$-runs, and so one of these $RT$-runs is equal to $d,e,f$. Hence $a,b,c$ and $d,e,f$ intersect. By reflecting $\mathcal{T}$ in a horizontal or vertical line and relabelling if necessary, there are four cases we need to consider as shown in Figure~\ref{fig:two_tops}. We derive a contradiction in each case as follows:
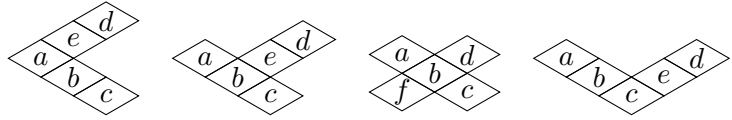
\begin{figure}
\begin{center}
\begin{tikzpicture}[fill=gray!50, scale=0.5,rotate=90,
vertex/.style={circle,inner sep=2,fill=black,draw}]

\ldia{0}{0}{$c$}
\ldia{0}{1}{$b$}
\ldia{0}{2}{$a$}
\ldia{1}{1}{$e$}
\ldia{2}{0}{$d$}

\end{tikzpicture}\hspace{0.3cm}
\begin{tikzpicture}[fill=gray!50, scale=0.5,rotate=90,
vertex/.style={circle,inner sep=2,fill=black,draw}]

\ldia{0}{0}{$c$}
\ldia{0}{1}{$b$}
\ldia{0}{2}{$a$}
\ldia{1}{0}{$e$}
\ldia{2}{-1}{$d$}

\end{tikzpicture}\hspace{0.3cm}
\begin{tikzpicture}[fill=gray!50, scale=0.5,rotate=90,
vertex/.style={circle,inner sep=2,fill=black,draw}]

\ldia{0}{0}{$c$}
\ldia{0}{1}{$b$}
\ldia{0}{2}{$a$}
\ldia{1}{0}{$d$}
\ldia{-1}{2}{$f$}

\end{tikzpicture}\hspace{0.3cm}
\begin{tikzpicture}[fill=gray!50, scale=0.5,rotate=90,
vertex/.style={circle,inner sep=2,fill=black,draw}]

\ldia{0}{0}{$c$}
\ldia{0}{1}{$b$}
\ldia{0}{2}{$a$}
\ldia{1}{-1}{$e$}
\ldia{2}{-2}{$d$}

\end{tikzpicture}
\end{center}
\caption{Two runs of top tiles: four cases}
\label{fig:two_tops}
\end{figure}

Consider the first case in Figure~\ref{fig:two_tops}. Suppose there is a tile $t$ northeast of $b$. This must be a top tile, as tiles in $\mathcal{T}$ do not overlap. This means we have determined the positions of all top tiles in $\mathcal{T}$, and $\mathcal{T}$ contains the following pattern:
\begin{center}
\begin{tikzpicture}[fill=gray!50, scale=0.5,rotate=90,
vertex/.style={circle,inner sep=2,fill=black,draw}]

\ldia{0}{0}{$c$}
\ldia{0}{1}{$b$}
\ldia{0}{2}{$a$}
\ldia{1}{1}{$e$}
\ldia{2}{0}{$d$}
\ldia{1}{0}{$t$}
\end{tikzpicture}
\end{center}
The $RT$-run through $d$ cannot contain another top tile, and in particular cannot be extended below $d$. Similarly, $LT$-run through $c$ cannot contain another top tile, and cannot be extended above $c$. So we have the following pattern in $\mathcal{T}$:

\begin{center}
\begin{tikzpicture}[fill=gray!50, scale=0.5,rotate=90,
vertex/.style={circle,inner sep=2,fill=black,draw}]

\ldia{0}{0}{$c$}
\ldia{0}{1}{$b$}
\ldia{0}{2}{$a$}
\ldia{1}{1}{$e$}
\ldia{2}{0}{$d$}
\ldia{1}{0}{$t$}
\bsla{3}{0}
\bsla{4}{0}
\sla{-1}{0}
\sla{-2}{0}
\end{tikzpicture}
\end{center}
This gives a contradiction, as the pattern contains parts of $4$ disjoint $LR$-runs (the unlabelled tiles above) and $\mathcal{T}$ contains only $2$ such runs.

Now assume that no tile exists at $t$, and so the $LT$-run through $b$ extends to the southwest. Note that this run cannot contain any other top tiles, as then the pattern would contain at least four $RT$-runs. Arguing similarly for the $RT$-run through $e$, we see that $\mathcal{T}$ must contain the following pattern:
\begin{center}
\begin{tikzpicture}[fill=gray!50, scale=0.5,rotate=90,
vertex/.style={circle,inner sep=2,fill=black,draw}]

\ldia{0}{0}{$c$}
\ldia{0}{1}{$b$}
\ldia{0}{2}{$a$}
\ldia{1}{1}{$e$}
\ldia{2}{0}{$d$}
\sla{-1}{1}
\sla{-2}{1}
\bsla{2}{1}
\bsla{3}{1}
\end{tikzpicture}
\end{center}
But this pattern contains parts of four $LR$-runs, and we again have a contradiction, as required.

Now consider the second case in Figure~\ref{fig:two_tops}. If there is a (top) tile to the northeast of $c$, there is a left tile below $c$, and there are two right-tiles above $d$. We get a contradiction, as $\mathcal{T}$ only contains two $LR$-runs, and we already have parts of three such runs. So there is no tile to the northeast of $c$. No tiles below $a,b,c$ can be top-tiles (as we would have too many $RT$-runs), so we have two left tiles below $c$, in the $LT$-run containing $c$. But the $RT$-run through $e$ cannot extend below $e$, and must contain a right tile. We have parts of three $LR$-runs, and so we have a contradiction.

Consider the third case in Figure~\ref{fig:two_tops}. Suppose there is a (top) tile to the northeast of $c$. The $LT$-run through $a$ cannot be extended downwards, and contains no more top tiles. So there are two left tiles in the $LT$-run above $a$. Similarly, there are two right tiles in the $RT$-run below $f$. But this means we have parts of four $LR$-runs in $\mathcal{T}$, and so we have a contradiction. So we may suppose there is no top tile to the northeast of $c$ (and so no tile to the southeast of $d$). The tile $c$ is at the upper end of its $LT$-run, and the tile $d$ is at the lower end of its $RT$-run. At most one of the tiles in these runs, other than $c$ and $d$, is a top tile. So again we have a contradiction, as we have parts of three $LR$-runs in $\mathcal{T}$ when we only have two such runs.

Finally, we consider the fourth case in Figure~\ref{fig:two_tops}. There are no top tiles below $a,b,c$ or $c,e,d$, since we would have too many $RT$- or $LT$-rows respectively. Suppose there is a top tile to the northeast of $b$ (and the northwest of $e$). The $LT$-run through $a$ and the $RT$-run through $d$ contain $4$ non-top tiles, all in disjoint $LR$-runs. This is a contradiction. Now suppose there is a left tile $s$ to the northeast of $b$ and a right tile $t$ to the northwest of $e$. We cannot extend both $a$ and $d$ upwards, as we would produce an $LR$-run of length $4$ through $s,t$. Without loss of generality, $d$ is not extended upwards, and we have the following diagram:
\begin{center}
\begin{tikzpicture}[fill=gray!50, scale=0.5,rotate=90,
vertex/.style={circle,inner sep=2,fill=black,draw}]

\ldia{0}{0}{$c$}
\ldia{0}{1}{$b$}
\ldia{0}{2}{$a$}
\ldia{1}{-1}{$e$}
\ldia{2}{-2}{$d$}
\lsla{1}{0}{$s$}
\lbsla{2}{-1}{$t$}
\bsla{2}{-3}
\bsla{1}{-3}
\end{tikzpicture}
\end{center}
We have a contradiction, since we have parts of three $LR$-runs. We have a similar contradiction if just one of $s$ and $t$ lies in our pattern. For suppose, without loss of generality, that $s$ lies in the pattern, but $t$ does not. Then $e$ cannot be extended upwards in an $LT$-run, and so we have the following diagram:
\begin{center}
\begin{tikzpicture}[fill=gray!50, scale=0.5,rotate=90,
vertex/.style={circle,inner sep=2,fill=black,draw}]

\ldia{0}{0}{$c$}
\ldia{0}{1}{$b$}
\ldia{0}{2}{$a$}
\ldia{1}{-1}{$e$}
\ldia{2}{-2}{$d$}
\lsla{1}{0}{$s$}
\bsla{1}{-2}
\bsla{0}{-2}
\end{tikzpicture}
\end{center}
We have parts of three $LR$-runs, giving us our contradiction. Finally, suppose that neither of $s$ nor $t$ lies in our pattern. Then $b$ is at the upper end of its $LT$-run and $e$ is at the upper end of its $RT$-run, and we have the following diagram:
\begin{center}
\begin{tikzpicture}[fill=gray!50, scale=0.5,rotate=90,
vertex/.style={circle,inner sep=2,fill=black,draw}]

\ldia{0}{0}{$c$}
\ldia{0}{1}{$b$}
\ldia{0}{2}{$a$}
\ldia{1}{-1}{$e$}
\ldia{2}{-2}{$d$}
\bsla{1}{-2}
\bsla{0}{-2}
\sla{-1}{1}
\sla{-2}{1}
\end{tikzpicture}
\end{center}
We have parts of four $LR$-runs, and so we have a contradiction in this case also. This completes the proof of the lemma.
\end{proof}

\begin{lemma}
\label{lem:12_3}
Assume that the pattern $\mathcal{T}$ above has $6$ top tiles, $3$ left tiles and $3$ right tiles. Each $LR$-run in $\mathcal{T}$ contains both left and right tiles.
\end{lemma}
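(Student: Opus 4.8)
The plan is to argue by contradiction, reducing first to a single very rigid configuration and then ruling it out by a local geometric analysis in the style of the proof of Lemma~\ref{lem:12_2}.

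First I would pin down the global structure. There are exactly two $LR$-runs, and (since the pattern has no leaves) the three left and three right tiles are precisely the tiles lying in $LR$-runs; as each $LR$-run has three tiles, the two runs partition these six tiles. If the lemma fails, some $LR$-run contains tiles of only one type, and since $3+3$ must split as $3+3$, one $LR$-run consists of three left tiles and the other of three right tiles. Call these $R_1$ and $R_2$. I would then record the dual of Lemma~\ref{lem:12_2} in this situation: since there are no leaves, each right tile lies in $R_2$ and in one $RT$-run, and by Lemma~\ref{lem:run}(ii) distinct right tiles lie in distinct $RT$-runs; as there are exactly three $RT$-runs, each $RT$-run contains exactly one right tile and two top tiles. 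The analogous statement for the $LT$-runs is already supplied by Lemma~\ref{lem:12_2}.

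Next I would set up the local picture around $R_1$. The three left tiles of $R_1$ are consecutive and stacked, and each is the \emph{unique} left tile of its (length-$3$) $LT$-run, so the two diagonal neighbours of a left tile inside its $LT$-run are top tiles. Thus each left tile sits at one of the three positions (either end, or the centre) of its $LT$-run, and this choice determines the two top tiles it produces. Because the three $LT$-runs are parallel and sit at consecutive offsets (the left tiles being stacked), the relative shifts of the three runs are tightly constrained: any two of them that lined up badly would create either overlapping tiles or a run of length $4$. Using the symmetries still compatible with the normalisation of Lemma~\ref{lem:12_2}, I would enumerate the few admissible arrangements of the three $LT$-runs and read off the forced positions of all six top tiles.

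Finally, for each surviving arrangement I would follow the three $RT$-runs through the six top tiles and attempt to attach the three right tiles so as to realise $R_2$ together with the forced ``two tops, one right'' shape of every $RT$-run. In each case this should fail for one of a short list of reasons: two tiles are forced to overlap; a run is forced to have length $4$ (contradicting Condition~1); the six top tiles together with $R_1$ (or with $R_2$) form a proper edge-connected component, contradicting that $\mathcal{T}$ is edge-connected on $12$ tiles; or the runs are forced to surround an uncovered triangle, so that $|\mathcal{T}|$ is not simply connected (the enclosed-region argument already used in Lemma~\ref{lem:not_touch}). Note that, unlike in Lemma~\ref{lem:12_2}, the ``parts of three $LR$-runs'' contradiction is unavailable here, since all six left/right tiles are already committed to $R_1$ and $R_2$; so the work shifts onto overlaps, length-$4$ runs, and the connectivity and simple-connectivity conditions. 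I expect the main obstacle to be exactly this bookkeeping: correctly determining the forced neighbours in each arrangement, keeping the number of cases down using the residual symmetry, and, in the borderline cases, making the simple-connectivity argument (an enclosed uncovered triangle) rigorous rather than merely plausible from a picture.
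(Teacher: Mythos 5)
Your preliminary reductions are correct and coincide with the paper's: there are exactly two $LR$-runs whose six tiles are precisely the left and right tiles, so a failure of the lemma forces one all-left run $R_1$ and one all-right run $R_2$; Lemma~\ref{lem:12_2} controls the $LT$-runs; and your dual observation that each $RT$-run contains exactly one right tile is sound (two right tiles in a common $RT$-run would make that run meet $R_2$ in two tiles, contradicting Lemma~\ref{lem:run}(ii)). The gap is everything after that. The enumeration of admissible positions of the three left tiles within their $LT$-runs, and the verification that attaching the right tiles fails in every case, is exactly where the content of the lemma lies, and in your proposal it is only announced (``I would enumerate\dots'', ``this should fail for one of a short list of reasons''), not carried out. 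Your own closing sentence concedes that this bookkeeping is the open obstacle, so what you have is a strategy, not a proof.

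There is also a missing idea, which is what lets the paper avoid your enumeration almost entirely. The paper splits into just two cases. If $a$, $b$, $c$ occupy the same position within their $LT$-runs (Figure~\ref{fig:12_straight_0}), the nine tiles form a leafless, hence ``closed'', pattern to which no tenth tile can be edge-attached without creating a run of length $4$, contradicting edge-connectedness of the $12$-tile pattern --- this mechanism you do anticipate. But in every remaining case the paper uses a separation argument (Figure~\ref{fig:12_straight_3}): there are top tiles on \emph{both} sides of the run $a,b,c$ whose $RT$-runs must be completed by right tiles lying on those respective sides, whereas all three right tiles belong to the single run $R_2$, which, being an $LR$-run that cannot cross the column occupied by $a,b,c$, cannot supply right tiles on both sides. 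This ``the right tiles cannot form a single $LR$-run'' contradiction is absent from your list of expected failure modes (overlap, length-$4$ run, edge-connectivity, simple connectivity), and it is not obviously subsumed by them; without it you would have to discover, case by case, how each arrangement actually dies, and your assertion that they all do is currently unsupported.
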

\begin{proof}
By Lemma~\ref{lem:12_2}, we may assume without loss of generality that all $LT$-runs contain $2$ top tiles and $1$ left tile.

Suppose, for a contradiction that $\mathcal{T}$ has an $LR$-run with tiles of just one type. Then one $LR$-run $a,b,c$ contains only left tiles, and the other $LR$-run contains only right tiles.
The tiles $a$, $b$ and $c$ are in disjoint $LT$-runs, and the $6$ other tiles in these runs are all the top tiles in $\mathcal{T}$. If $a$, $b$ and $c$ lie in the same place within their $LR$-runs, we get a contradiction because because the $9$ tiles form a pattern with no leaves so we cannot add the second $LR$-run to the pattern (see Figure~\ref{fig:12_straight_0}). Otherwise, there are top tiles on both sides of $a,b,c$ that are in $RT$-runs that are not entirely made from top tiles. (See Figure~\ref{fig:12_straight_3} for examples). But the right tiles in these runs cannot form a single $LR$-run, and so we have a contradiction as required.
\begin{figure}
\begin{center}
\begin{tikzpicture}[fill=gray!50, scale=0.5,rotate=90,
vertex/.style={circle,inner sep=2,fill=black,draw}]
\lsla{0}{0}{$c$}
\lsla{0}{1}{$b$}
\lsla{0}{2}{$a$}
\dia{1}{2}
\dia{2}{1}
\dia{1}{1}
\dia{2}{0}
\dia{1}{0}
\dia{2}{-1}
\end{tikzpicture}\hspace{0.3cm}
\begin{tikzpicture}[fill=gray!50, scale=0.5,rotate=90,
vertex/.style={circle,inner sep=2,fill=black,draw}]
\lsla{0}{0}{$c$}
\lsla{0}{1}{$b$}
\lsla{0}{2}{$a$}
\dia{-2}{4}
\dia{-1}{3}
\dia{-2}{3}
\dia{-1}{2}
\dia{-2}{2}
\dia{-1}{1}
\end{tikzpicture}\hspace{0.3cm}\begin{tikzpicture}[fill=gray!50, scale=0.5,rotate=90,
vertex/.style={circle,inner sep=2,fill=black,draw}]
\lsla{0}{0}{$c$}
\lsla{0}{1}{$b$}
\lsla{0}{2}{$a$}
\dia{-1}{3}
\dia{1}{2}
\dia{-1}{2}
\dia{1}{1}
\dia{-1}{1}
\dia{1}{0}
\end{tikzpicture}\hspace{0.3cm}
\end{center}
\caption{Patterns with no leaves.}
\label{fig:12_straight_0}
\end{figure}
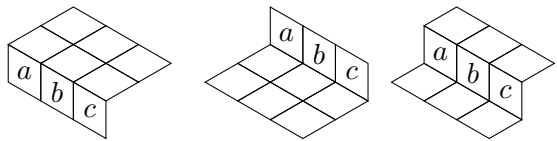
\begin{figure}
\begin{center}
\begin{tikzpicture}[fill=gray!50, scale=0.5,rotate=90,
vertex/.style={circle,inner sep=2,fill=black,draw}]
\lsla{0}{0}{$c$}
\lsla{0}{1}{$b$}
\lsla{0}{2}{$a$}
\lgdia{-2}{4}
\lgdia{-1}{3}
\lgdia{1}{1}
\lgdia{2}{0}
\lgdia{1}{0}
\lgdia{2}{-1}
\end{tikzpicture}\hspace{0.3cm}
\begin{tikzpicture}[fill=gray!50, scale=0.5,rotate=90,
vertex/.style={circle,inner sep=2,fill=black,draw}]
\lsla{0}{0}{$c$}
\lsla{0}{1}{$b$}
\lsla{0}{2}{$a$}
\lgdia{-2}{4}
\dia{-1}{3}
\dia{-1}{2}
\lgdia{1}{1}
\lgdia{-2}{2}
\dia{-1}{1}
\end{tikzpicture}\hspace{0.3cm}\begin{tikzpicture}[fill=gray!50, scale=0.5,rotate=90,
vertex/.style={circle,inner sep=2,fill=black,draw}]
\lsla{0}{0}{$c$}
\lsla{0}{1}{$b$}
\lsla{0}{2}{$a$}
\dia{1}{2}
\lgdia{2}{1}
\lgdia{-1}{2}
\dia{1}{1}
\lgdia{-1}{1}
\dia{1}{0}
\end{tikzpicture}\hspace{0.3cm}
\end{center}
\caption{The shaded tiles require their $RT$-runs to be completed with right tiles. Note that shaded tiles lie on both sides of $a,b,c$.}
\label{fig:12_straight_3}
\end{figure}
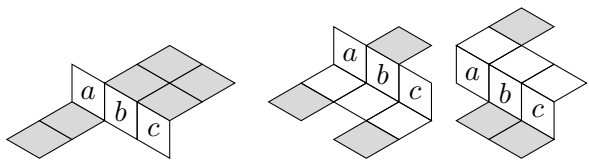
\end{proof}

\begin{theorem}
\label{thm:12}
No $12$-tile tredoku pattern with $0$ leaves exists.
\end{theorem}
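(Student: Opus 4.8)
The plan is to finish the argument begun in Lemmas~\ref{lem:12_1},~\ref{lem:12_2} and~\ref{lem:12_3} by pinning down the global combinatorial structure forced on $\mathcal{T}$, and then showing that this structure cannot be realised in the plane. First I would assemble what the three lemmas give. By Lemma~\ref{lem:12_1} we may assume $\mathcal{T}$ has $6$ top, $3$ left and $3$ right tiles, and (as in the proof of Lemma~\ref{lem:12_2}) that there are exactly $3$ $LT$-runs, $3$ $RT$-runs and $2$ $LR$-runs. Lemma~\ref{lem:12_2} lets me assume every $LT$-run is two-top-one-left, and Lemma~\ref{lem:12_3} says each $LR$-run contains both colours; since the two $LR$-runs together hold $3$ left and $3$ right tiles, one is two-left-one-right and the other one-left-two-right. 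Crucially, because $\mathcal{T}$ has no leaves, \emph{every} tile lies in exactly two runs of length~$3$: each top in one $LT$- and one $RT$-run, each left in one $LT$- and one $LR$-run, and each right in one $RT$- and one $LR$-run.

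Next I would describe the $RT$-runs and extract a ``top cycle''. The distribution of the $6$ tops among the $3$ $RT$-runs is, by counting, one of $(2,2,2)$, $(3,2,1)$ or $(3,3,0)$. In the generic case $(2,2,2)$, where every $RT$-run is two-top-one-right, I form the graph on the $6$ top tiles in which two tops are joined when they share a run: each top then has exactly one $LT$-edge and one $RT$-edge, so this graph is $2$-regular with properly $2$-coloured edges and is therefore a disjoint union of even cycles. A $2$-cycle would force two tops to share two distinct runs, which Lemma~\ref{lem:run}(ii) forbids, so the $6$ tops form a single $6$-cycle whose edges alternate $LT,RT,LT,RT,LT,RT$, with the three left tiles hung on the $LT$-edges and the three right tiles on the $RT$-edges. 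The remaining distributions $(3,2,1)$ and $(3,3,0)$ contain an all-top $RT$-run, and I would dispatch these by a geometric argument in the style of Lemma~\ref{lem:12_2}: such a run, together with the three two-top-one-left $LT$-runs meeting it, already uses all six tops and all three lefts, and tracing the forced right tiles exhibits either a run of length $\ge 4$, a repetition of runs (Lemma~\ref{lem:run}(i)), or an enclosed uncovered region.

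Finally I would rule out the embedding of the $6$-cycle. Writing $u$ and $v$ for the centre-to-centre lattice steps in the $LT$ and $RT$ directions, the displacement between the two tops of a run is $\pm u$ or $\pm 2u$ (respectively $\pm v$ or $\pm 2v$), according as the attached off-colour tile sits at an \emph{end} of the run or in its \emph{middle}. Closing up the cycle gives $a_1+a_2+a_3=0$ and $b_1+b_2+b_3=0$ with each $a_i,b_i\in\{\pm 1,\pm 2\}$; a parity check then shows the multiset of magnitudes in each direction must be $\{1,1,2\}$, so \emph{exactly one} $LT$-run carries its left tile centrally and exactly one $RT$-run carries its right tile centrally. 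Up to the dihedral symmetry of the lattice this leaves only a short list of placements to examine. In each I expect the contradiction from one of four sources: the three left and three right tiles fail to assemble into exactly two straight north--south $LR$-runs; a run of length $4$ or a pair of coinciding runs is created (Lemma~\ref{lem:run}(i)); the top necklace encloses an uncovered region, violating simple connectivity (Condition~3); or two tiles meet only at a corner, contradicting Lemma~\ref{lem:not_touch}. The main obstacle is precisely this closing geometric case analysis together with the all-top $RT$-run subcases: the combinatorics is entirely consistent, so the impossibility is genuinely \emph{planar}, and the real work lies in organising the finitely many embeddings of the top cycle and verifying that the attached left and right tiles can never close up into two legal $LR$-runs without forcing a hole, an illegal run, or a singular corner.
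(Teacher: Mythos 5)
Your reduction to the three top-distributions $(2,2,2)$, $(3,2,1)$, $(3,3,0)$ and your top-cycle construction in the $(2,2,2)$ case are sound, but the lattice-vector step that is supposed to organise the endgame contains a concrete error, and it matters. In a run consisting of two top tiles and one left tile, the two tops are \emph{not} displaced by $\pm u$ or $\pm 2u$ for any single vector $u$. With unit tiles, if the two tops are adjacent their centres differ by a vector of length $1$; if the left tile sits between them, their centres differ by a vector of length $\sqrt{3}$ whose direction differs from the first by $30^{\circ}$ --- crossing the left tile translates the run by the $LR$-direction step $w$, not by another copy of $u$, because the centres of tiles in a mixed run are not collinear with the top--top axis. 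Writing $u,v$ for the top--top steps in the $LT$ and $RT$ directions (so that $w=u+v$), the possible $LT$-edge contributions to your cycle are $\pm u$ and $\pm(2u+v)$, and the $RT$-edge contributions are $\pm v$ and $\pm(u+2v)$. The closure equations therefore do \emph{not} decouple into two independent sums with entries in $\{\pm 1,\pm 2\}$, and the correct parity computation gives the opposite of your conclusion: the number of centrally placed left tiles and the number of centrally placed right tiles must have \emph{odd} sum. In particular your forced configuration (one central left and one central right) is geometrically impossible, while configurations you discard, such as one central left and two central rights, do satisfy closure (e.g.\ $-u-u+(2u+v)-v+(u+2v)-(u+2v)=0$). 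The case analysis you propose to finish with would thus be run over the wrong list of candidates and could never eliminate the true ones.

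Even setting the parity error aside, the proposal stops where the real proof has to start. The all-top $RT$-run cases $(3,2,1)$ and $(3,3,0)$ are only asserted to be dispatchable ``in the style of Lemma~\ref{lem:12_2}'', and for the $6$-cycle case you say you ``expect'' a contradiction and that ``the real work lies in organising the finitely many embeddings''; as you yourself observe, the combinatorics is consistent, so this planar verification \emph{is} the content of Theorem~\ref{thm:12}, and deferring it leaves an outline rather than a proof. For comparison, the paper completes this step by a different decomposition: it splits $\mathcal{T}$ into the two-left $LR$-run together with its two $LT$-runs (Figures~\ref{fig:rll}--\ref{fig:llr}) and the one-left $LR$-run together with its single $LT$-run (Figures~\ref{fig:lrr}--\ref{fig:rrl}), enumerates the finitely many shapes of each piece, and then shows that the left-to-right sequences of partial $RT$-run lengths, which lie in $\{(2,3),(1,3,1),(3,2),(2,2,1),(1,2,2)\}$ for the first piece and in $\{(1,2,1),(2,2)\}$ for the second, can never be assembled into three $RT$-runs of length $3$. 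If you want to salvage your route, you would need to redo the closure analysis with the correct displacement vectors $\pm u,\pm(2u+v),\pm v,\pm(u+2v)$ and then actually carry out the resulting embedding check, including the all-top $RT$-run cases.
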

\begin{proof}
Suppose, for a contradiction, that $\mathcal{T}$ is a tredoku pattern with $12$ tiles and $0$ leaves. By Lemma~\ref{lem:12_1}, we may assume that $\mathcal{T}$ consists of $6$ top tiles, $3$ left tiles and $3$ right tiles. By Lemma~\ref{lem:12_2}, we may assume that every $LT$-run contains exactly one left tile. By Lemma~\ref{lem:12_3}, $\mathcal{T}$ has two $LR$-runs, with one run $a,b,c$ containing $2$ left tiles and the other run $d,e,f$ containing $1$ left tile.

There are three possibilities for the run $a,b,c$, depending on the location of the right tile. Each such run is drawn in Figures~\ref{fig:rll} to~\ref{fig:llr}, together with all possibilities for the $LT$-runs intersecting $a,b,c$. (Note that there are exactly $4$ tiles, all top tiles, in these runs. If a combination of runs is not depicted, it can be ruled out because the pattern contains too many $RT$-runs.) Similarly, there are three possibilities for the run $d,e,f$, and all possibilities for this run and the unique $LT$ run intersecting it are drawn in Figures~\ref{fig:lrr} to~\ref{fig:rrl}.
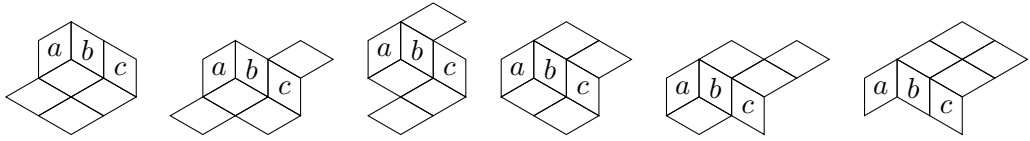
\begin{figure}
\begin{center}
\begin{tikzpicture}[fill=gray!50, scale=0.5,rotate=90,
vertex/.style={circle,inner sep=2,fill=black,draw}]
\lsla{0}{0}{$c$}
\lsla{0}{1}{$b$}
\lbsla{0}{2}{$a$}
\dia{-2}{3}
\dia{-1}{2}
\dia{-2}{2}
\dia{-1}{1}
\end{tikzpicture}\hspace{0.3cm}
\begin{tikzpicture}[fill=gray!50, scale=0.5,rotate=90,
vertex/.style={circle,inner sep=2,fill=black,draw}]
\lsla{0}{0}{$c$}
\lsla{0}{1}{$b$}
\lbsla{0}{2}{$a$}
\dia{-2}{3}
\dia{-1}{2}
\dia{-1}{1}
\dia{1}{0}
\end{tikzpicture}\hspace{0.3cm}
\begin{tikzpicture}[fill=gray!50, scale=0.5,rotate=90,
vertex/.style={circle,inner sep=2,fill=black,draw}]
\lsla{0}{0}{$c$}
\lsla{0}{1}{$b$}
\lbsla{0}{2}{$a$}
\dia{-1}{2}
\dia{1}{1}
\dia{-2}{2}
\dia{-1}{1}
\end{tikzpicture}\hspace{0.3cm}
\begin{tikzpicture}[fill=gray!50, scale=0.5,rotate=90,
vertex/.style={circle,inner sep=2,fill=black,draw}]
\lsla{0}{0}{$c$}
\lsla{0}{1}{$b$}
\lbsla{0}{2}{$a$}
\dia{-1}{2}
\dia{1}{1}
\dia{-1}{1}
\dia{1}{0}
\end{tikzpicture}\hspace{0.3cm}
\begin{tikzpicture}[fill=gray!50, scale=0.5,rotate=90,
vertex/.style={circle,inner sep=2,fill=black,draw}]
\lsla{0}{0}{$c$}
\lsla{0}{1}{$b$}
\lbsla{0}{2}{$a$}
\dia{-1}{2}
\dia{1}{1}
\dia{1}{0}
\dia{2}{-1}
\end{tikzpicture}\hspace{0.3cm}
\begin{tikzpicture}[fill=gray!50, scale=0.5,rotate=90,
vertex/.style={circle,inner sep=2,fill=black,draw}]
\lsla{0}{0}{$c$}
\lsla{0}{1}{$b$}
\lbsla{0}{2}{$a$}
\dia{1}{1}
\dia{2}{0}
\dia{1}{0}
\dia{2}{-1}
\end{tikzpicture}\hspace{0.3cm}
\end{center}
\caption{When $a$ is a right tile.}
\label{fig:rll}
\end{figure}
\begin{figure}
\begin{center}
\begin{tikzpicture}[fill=gray!50, scale=0.5,rotate=90,
vertex/.style={circle,inner sep=2,fill=black,draw}]
\lsla{1}{0}{$c$}
\lbsla{1}{1}{$b$}
\lsla{0}{2}{$a$}
\dia{-2}{4}
\dia{-1}{3}
\dia{-1}{2}
\dia{0}{1}
\end{tikzpicture}\hspace{0.3cm}\begin{tikzpicture}[fill=gray!50, scale=0.5,rotate=90,
vertex/.style={circle,inner sep=2,fill=black,draw}]
\lsla{1}{0}{$c$}
\lbsla{1}{1}{$b$}
\lsla{0}{2}{$a$}
\dia{-1}{3}
\dia{1}{2}
\dia{-1}{2}
\dia{0}{1}
\end{tikzpicture}\hspace{0.3cm}\begin{tikzpicture}[fill=gray!50, scale=0.5,rotate=90,
vertex/.style={circle,inner sep=2,fill=black,draw}]
\lsla{1}{0}{$c$}
\lbsla{1}{1}{$b$}
\lsla{0}{2}{$a$}
\dia{-1}{3}
\dia{1}{2}
\dia{0}{1}
\dia{2}{0}
\end{tikzpicture}\hspace{0.3cm}\begin{tikzpicture}[fill=gray!50, scale=0.5,rotate=90,
vertex/.style={circle,inner sep=2,fill=black,draw}]
\lsla{1}{0}{$c$}
\lbsla{1}{1}{$b$}
\lsla{0}{2}{$a$}
\dia{1}{2}
\dia{2}{1}
\dia{-1}{2}
\dia{0}{1}
\end{tikzpicture}\hspace{0.3cm}\begin{tikzpicture}[fill=gray!50, scale=0.5,rotate=90,
vertex/.style={circle,inner sep=2,fill=black,draw}]
\lsla{1}{0}{$c$}
\lbsla{1}{1}{$b$}
\lsla{0}{2}{$a$}
\dia{1}{2}
\dia{2}{1}
\dia{0}{1}
\dia{2}{0}
\end{tikzpicture}\hspace{0.3cm}
\begin{tikzpicture}[fill=gray!50, scale=0.5,rotate=90,
vertex/.style={circle,inner sep=2,fill=black,draw}]
\lsla{1}{0}{$c$}
\lbsla{1}{1}{$b$}
\lsla{0}{2}{$a$}
\dia{1}{2}
\dia{2}{1}
\dia{2}{0}
\dia{3}{-1}
\end{tikzpicture}\hspace{0.3cm}
\end{center}
\caption{When $b$ is a right tile.}
\label{fig:lrl}
\end{figure}
\begin{figure}
\begin{center}
\begin{tikzpicture}[fill=gray!50, scale=0.5,rotate=90,
vertex/.style={circle,inner sep=2,fill=black,draw}]
\lbsla{1}{0}{$c$}
\lsla{0}{1}{$b$}
\lsla{0}{2}{$a$}
\dia{-2}{4}
\dia{-1}{3}
\dia{-2}{3}
\dia{-1}{2}
\end{tikzpicture}\hspace{0.3cm}\begin{tikzpicture}[fill=gray!50, scale=0.5,rotate=90,
vertex/.style={circle,inner sep=2,fill=black,draw}]
\lbsla{1}{0}{$c$}
\lsla{0}{1}{$b$}
\lsla{0}{2}{$a$}
\dia{-2}{4}
\dia{-1}{3}
\dia{-1}{2}
\dia{1}{1}
\end{tikzpicture}\hspace{0.3cm}\begin{tikzpicture}[fill=gray!50, scale=0.5,rotate=90,
vertex/.style={circle,inner sep=2,fill=black,draw}]
\lbsla{1}{0}{$c$}
\lsla{0}{1}{$b$}
\lsla{0}{2}{$a$}
\dia{-1}{3}
\dia{1}{2}
\dia{-1}{2}
\dia{1}{1}
\end{tikzpicture}\hspace{0.3cm}\begin{tikzpicture}[fill=gray!50, scale=0.5,rotate=90,
vertex/.style={circle,inner sep=2,fill=black,draw}]
\lbsla{1}{0}{$c$}
\lsla{0}{1}{$b$}
\lsla{0}{2}{$a$}
\dia{1}{2}
\dia{2}{1}
\dia{-1}{2}
\dia{1}{1}
\end{tikzpicture}\hspace{0.3cm}\begin{tikzpicture}[fill=gray!50, scale=0.5,rotate=90,
vertex/.style={circle,inner sep=2,fill=black,draw}]
\lbsla{1}{0}{$c$}
\lsla{0}{1}{$b$}
\lsla{0}{2}{$a$}
\dia{-1}{3}
\dia{1}{2}
\dia{1}{1}
\dia{2}{0}
\end{tikzpicture}\hspace{0.3cm}
\begin{tikzpicture}[fill=gray!50, scale=0.5,rotate=90,
vertex/.style={circle,inner sep=2,fill=black,draw}]
\lbsla{1}{0}{$c$}
\lsla{0}{1}{$b$}
\lsla{0}{2}{$a$}
\dia{1}{2}
\dia{2}{1}
\dia{1}{1}
\dia{2}{0}
\end{tikzpicture}\hspace{0.3cm}
\end{center}
\caption{When $c$ is a right tile.}
\label{fig:llr}
\end{figure}
\begin{figure}
\begin{center}
\begin{tikzpicture}[fill=gray!50, scale=0.5,rotate=90,
vertex/.style={circle,inner sep=2,fill=black,draw}]
\lbsla{0}{0}{$f$}
\lbsla{-1}{1}{$e$}
\lsla{-2}{2}{$d$}
\dia{-3}{3}
\dia{-1}{2}
\end{tikzpicture}\hspace{0.3cm}
\begin{tikzpicture}[fill=gray!50, scale=0.5,rotate=90,
vertex/.style={circle,inner sep=2,fill=black,draw}]
\lbsla{0}{0}{$f$}
\lbsla{-1}{1}{$e$}
\lsla{-2}{2}{$d$}
\dia{-1}{2}
\dia{0}{1}
\end{tikzpicture}\hspace{0.3cm}
\end{center}
\caption{When $d$ is a left tile.}
\label{fig:lrr}
\end{figure}
\begin{figure}
\begin{center}
\begin{tikzpicture}[fill=gray!50, scale=0.5,rotate=90,
vertex/.style={circle,inner sep=2,fill=black,draw}]
\lbsla{0}{0}{$f$}
\lsla{-1}{1}{$e$}
\lbsla{-1}{2}{$d$}
\dia{-3}{3}
\dia{-2}{2}
\end{tikzpicture}\hspace{0.3cm}
\begin{tikzpicture}[fill=gray!50, scale=0.5,rotate=90,
vertex/.style={circle,inner sep=2,fill=black,draw}]
\lbsla{0}{0}{$f$}
\lsla{-1}{1}{$e$}
\lbsla{-1}{2}{$d$}
\dia{-2}{2}
\dia{0}{1}
\end{tikzpicture}\hspace{0.3cm}
\begin{tikzpicture}[fill=gray!50, scale=0.5,rotate=90,
vertex/.style={circle,inner sep=2,fill=black,draw}]
\lbsla{0}{0}{$f$}
\lsla{-1}{1}{$e$}
\lbsla{-1}{2}{$d$}
\dia{0}{1}
\dia{1}{0}
\end{tikzpicture}\hspace{0.3cm}
\end{center}
\caption{When $e$ is a left tile.}
\label{fig:rlr}
\end{figure}
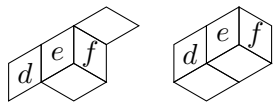
\begin{figure}
\begin{center}
\begin{tikzpicture}[fill=gray!50, scale=0.5,rotate=-90,
vertex/.style={circle,inner sep=2,fill=black,draw}]
\lbsla{0}{0}{$d$}
\lbsla{-1}{1}{$e$}
\lsla{-2}{2}{$f$}
\dia{-3}{3}
\dia{-1}{2}
\end{tikzpicture}\hspace{0.3cm}
\begin{tikzpicture}[fill=gray!50, scale=0.5,rotate=-90,
vertex/.style={circle,inner sep=2,fill=black,draw}]
\lbsla{0}{0}{$d$}
\lbsla{-1}{1}{$e$}
\lsla{-2}{2}{$f$}
\dia{-1}{2}
\dia{0}{1}
\end{tikzpicture}\hspace{0.3cm}
\end{center}
\caption{When $f$ is a left tile.}
\label{fig:rrl}
\end{figure}

Since all $LT$-runs and all $LR$-runs in $\mathcal{T}$ are disjoint, we see that the pattern $\mathcal{T}$ is the disjoint union of a pattern $\mathcal{T}_1$ from Figures~\ref{fig:rll} to~\ref{fig:llr} and a pattern $\mathcal{T}_2$ from Figures~\ref{fig:lrr} to~\ref{fig:rrl}. But no such combinations give a tredoku pattern with $12$ tiles and no leaves. Here is one way of seeing this. Note that the $RT$-runs in $\mathcal{T}_1$ of length $1$, $2$ or $3$ and the $RT$-runs in $\mathcal{T}_2$ of length $1$ or $2$ must combine to give three $RT$-runs of length $3$. Reading left-to-right, the lengths of $RT$-runs in $\mathcal{T}_1$ lie in the set
\[
\{(2,3),(1,3,1),(3,2),(2,2,1),(1,2,2)\}
\]
whereas the lengths of $RT$-runs in $\mathcal{T}_2$ lie in the set $\{(1,2,1),(2,2)\}$. The sequence of lengths of $RT$-runs in $\mathcal{T}_2$ cannot be $(1,2,1)$, as $(2,1,2)$ is not in the set of lengths of $RT$-runs in $\mathcal{T}_1$. So the sequence of lengths of $RT$-runs in $\mathcal{T}_2$ is $(2,2)$, and there are three possibilities for $\mathcal{T}_2$ (namely the second possibility in Figure~\ref{fig:lrr}, the middle possibility in Figure~\ref{fig:rlr} and the second possibility in Figure~\ref{fig:rrl}). In each of these possibilities, the upper tiles in the $RT$-runs touch at a corner, and the same is true for the lower tiles in these runs. The sequence of lengths of $RT$-runs in $\mathcal{T}_1$ contains two runs of length $1$, and so must be $(1,3,1)$. But in all $6$ possibilities for $\mathcal{T}_1$ in Figures~\ref{fig:rll} to~\ref{fig:llr} that have associated sequence $(1,3,1)$, the tiles in $RT$-runs of length $1$ in $\mathcal{T}_1$ do not touch at a corner, so they cannot be combined with $RT$-runs in $\mathcal{T}_2$ to give two $RT$-runs of length $3$. This gives the contradiction we require, and so the theorem follows.
\end{proof}

\section{Conclusion}
\label{sec:conclusion}

We conclude with an observation and three open problems,

\begin{enumerate}
\item We introduced the notion of a weak tredoku pattern in Section~\ref{sec:big_ell}. What parameters are possible for such patterns? All the parameters having tredoku patterns are possible. Also, see Figure~\ref{fig:infinitelots}, there are patterns with $\rho$ runs, $2\rho+1$ tiles and $\rho+2$ leaves for all positive integers $\rho$. The proof that there are no tredoku patterns with $12$ tiles and $0$ leaves in fact shows there are no weak tredoku patterns with these parameters. Small parameters are easy to deal with. With the results and remarks from Section~\ref{sec:big_ell}, we have the following theorem.
\begin{theorem}
\label{thm:main_weak}
Let $\tau$, $\rho$ and $\ell$ be non-negative integers with $\tau\geq 3$, such that $\ell\leq\tau$ and $\ell=2\tau-3\rho$.
\begin{itemize}
\item[(i)] If $\ell>\lceil\tau/2\rceil+1$, then a $\tau$-tile weak tredoku pattern with $\rho$ runs of length $3$ and $\ell$ leaves does not exist.
\item[(ii)] Suppose
\[
(\tau,\rho,\ell)\in\{(3,2,0),(4,2,2),(5,3,1),(6,4,0),(12,8,0)\}.
\]
Then a $\tau$-tile weak tredoku pattern with $\rho$ runs of length $3$ and $\ell$ leaves does not exist.
\item[(iii)] If neither of the conditions~(i) and (ii)  hold, then a $\tau$-tile weak tredoku pattern with $\rho$ runs of length $3$ and $\ell$ leaves exists.
\end{itemize}
\end{theorem}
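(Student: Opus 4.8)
The plan is to treat the three parts in turn, leaning on results already established and on the single observation that the non-existence arguments of the earlier sections never invoked Condition~4. Part~(i) requires nothing new: it is exactly Lemma~\ref{lem:weak}, which was proved directly for weak tredoku patterns.

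For part~(ii), I would revisit the proofs of Lemma~\ref{lem:v_small} and Theorem~\ref{thm:12} and verify that they apply verbatim to weak tredoku patterns. The four cases $(3,2,0)$, $(4,2,2)$, $(5,3,1)$ and $(6,4,0)$ of Lemma~\ref{lem:v_small} are ruled out using only Lemma~\ref{lem:run} (disjointness and single-tile intersection of runs), the identity of Lemma~\ref{lem:counting}, edge-connectedness, and a type-counting argument, none of which invokes Condition~4. (The only case of that lemma that used Condition~4 was $(3,1,3)$, which is deliberately absent here; indeed $(3,1,3)$ \emph{is} a weak tredoku pattern.) Likewise, the chain of Lemmas~\ref{lem:12_1}--\ref{lem:12_3} and Theorem~\ref{thm:12} rest only on run-counting, the distribution of tile types, edge-connectedness, and Lemma~\ref{lem:not_touch} (which was itself proved for weak patterns), so the case $(12,8,0)$ survives unchanged. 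This yields all five non-existence statements.

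For part~(iii), I would split the admissible parameters according to whether $\ell=\lceil\tau/2\rceil+1$. If $\ell<\lceil\tau/2\rceil+1$ and $(\tau,\rho,\ell)$ avoids the five exceptional sets, then $(\tau,\rho,\ell)$ is none of the parameters excluded in Theorem~\ref{thm:main}(ii), since the genuine exclusions $(3,1,3)$, $(15,7,9)$ and the $\rho\geq 9$ family all satisfy $\ell=\lceil\tau/2\rceil+1$; hence Theorem~\ref{thm:main}(iii) furnishes a genuine tredoku pattern, which is in particular a weak tredoku pattern. If instead $\ell=\lceil\tau/2\rceil+1$, then the same arithmetic as in Lemma~\ref{lem:verdant_top_row} forces $(\tau,\rho,\ell)=(2\rho+1,\rho,\rho+2)$; here I would exhibit, for every $\rho\geq 1$, a weak tredoku pattern formed by a path of $\rho$ length-$3$ runs in which consecutive runs meet in a single tile and no two further tiles coincide (the patterns of Figure~\ref{fig:infinitelots}). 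Such a pattern has $3\rho-(\rho-1)=2\rho+1$ tiles and $\rho$ runs, whence $\ell=\rho+2$ by Lemma~\ref{lem:counting}, so it realises exactly the verdant parameters, including the cases $(3,1,3)$, $(15,7,9)$ and all $\rho\geq 9$ for which no genuine tredoku pattern exists.

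The routine part is part~(ii): once one checks, line by line, that Condition~4 is never used, the non-existence transfers for free. The step deserving genuine care is the verdant construction in part~(iii): I must lay the path of runs out in the triangular grid so that Conditions~1--3 hold for every $\rho$, that is, so that no two length-$3$ runs fuse into a longer run, no stray run of length~$2$ appears, and the covered region encloses no hole. For the $\rho=7$ and $\rho\geq 9$ cases the resulting pattern is necessarily only a weak tredoku pattern, since by Theorem~\ref{thm:verdant} no genuine verdant pattern exists there it must carry a leaf at the centre of one of its runs, and confirming simple-connectedness and the run-length condition for the explicit staircase layout is the main obstacle.
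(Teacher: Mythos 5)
Your proposal is correct and follows essentially the same route as the paper: part~(i) is Lemma~\ref{lem:weak} verbatim, part~(ii) is the paper's own observation that the non-existence arguments of Lemma~\ref{lem:v_small} (excluding the $(3,1,3)$ case) and of Lemmas~\ref{lem:12_1}--\ref{lem:12_3} and Theorem~\ref{thm:12} never invoke Condition~4, and part~(iii) combines genuine tredoku patterns from Theorem~\ref{thm:main}(iii) with the staircase patterns of Figure~\ref{fig:infinitelots} to cover the verdant parameters $(2\rho+1,\rho,\rho+2)$ for all $\rho\geq 1$, exactly as the paper does. Your case split on whether $\ell=\lceil\tau/2\rceil+1$, justified by the arithmetic of Lemma~\ref{lem:verdant_top_row}, is a slightly more explicit packaging of the paper's remark that ``all the parameters having tredoku patterns are possible'' plus the staircase construction, and is sound.
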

\begin{figure}
\begin{center}
\begin{tikzpicture}[fill=gray!50, scale=0.5,rotate=90,
vertex/.style={circle,inner sep=2,fill=black,draw}]

\sla{0}{0}
\sla{0}{1}
\sla{0}{2}

\end{tikzpicture}\hspace{0.3cm}
\begin{tikzpicture}[fill=gray!50, scale=0.5,rotate=90,
vertex/.style={circle,inner sep=2,fill=black,draw}]

\sla{5}{0}
\sla{5}{1}
\sla{5}{2}
\sla{4}{2}
\sla{3}{2}

\end{tikzpicture}\hspace{0.3cm}
\begin{tikzpicture}[fill=gray!50, scale=0.5,rotate=90,
vertex/.style={circle,inner sep=2,fill=black,draw}]

\sla{10}{0}
\sla{10}{1}
\sla{10}{2}
\sla{9}{2}
\sla{8}{2}
\sla{8}{3}
\sla{8}{4}

\end{tikzpicture}\hspace{0.3cm}
\begin{tikzpicture}[fill=gray!50, scale=0.5,rotate=90,
vertex/.style={circle,inner sep=2,fill=black,draw}]

\sla{16}{0}
\sla{16}{1}
\sla{16}{2}
\sla{15}{2}
\sla{14}{2}
\sla{14}{3}
\sla{14}{4}
\sla{13}{4}
\sla{12}{4}

\end{tikzpicture}
\end{center}
\caption{Weak tredoku patterns with $\rho$ runs, $2\rho+1$ tiles and $\rho+2$ leaves, for $\rho=1,2,3,4$.}
\label{fig:infinitelots}
\end{figure}
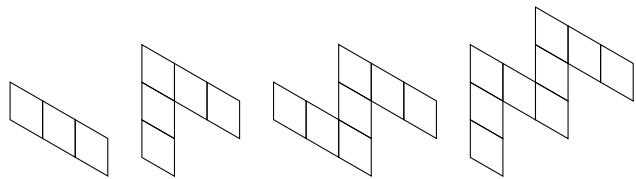
\item Tredoku patterns were originally motivated by tredoku puzzles. Can every tredoku pattern be used as the basis for a tredoku puzzle? To give more detail, suppose we divide each diamond in a tredoku pattern into a $3\times 3$ grid of sub-diamonds. Define a \emph{grouping} of sub-diamonds to be the set of sub-diamonds that are contained in a fixed diamond, or the set $9$ sub-diamonds in a `straight line' which follows a run of length~$3$. Can we always assign a number from $\{1,2,\ldots ,9\}$ to each sub-diamond so that each grouping of sub-diamonds is a permutation of $\{1,2,\ldots ,9\}$? I conjecture the answer is `yes'.
\item How does the number of tredoku patterns grow? More precisely, for a positive integer $m$ we may define $f(m)$ to be the number $f(m)$ of tredoku patterns contained within a hexagon with sides of length $m$ (where our equilateral triangles have a side of length $1$). What can be said about the growth of $f(m)$ as $m\rightarrow\infty$?
\item It is interesting to ask the same enumeration question of tredoku patterns where we drop the simply connected condition. Here is one natural definition. We define a \emph{generalised tredoku pattern} to be a pattern of $3$ or more diamond-shaped tiles that satisfies the following conditions:
\begin{enumerate}
\item If there exists a run of length $\ell$ in the pattern, then $\ell\in\{1,3\}$.
\item The pattern is edge-connected; tiles are adjacent if and only if they share an edge.
\item The region in the plane defined by the pattern is nonsingular. (In other words, the pattern does not `touch at a point'.) 
\item If any tile of the pattern is removed, the pattern remains connected. Here tiles are adjacent if they share an edge or a vertex.
\end{enumerate}
How does the number of generalised tredoku patterns in a hexagon of side $m$ grow as $m\rightarrow\infty$?
\end{enumerate}

\appendix\newpage

\section{Donald Preece's abstract from Combinatorics, Algebra and More, July 2013}
\label{sec:Preece}

\noindent
\textbf{Donald A. Preece}

\vspace{0.3cm}
\noindent
(Queen Mary, University of London, UK / University of Kent, UK.)

\vspace{0.3cm}
\noindent
\emph{Tredoku tilings.}

\vspace{0,3cm}
We have a supply of identical tiles, each in the shape of a rhombus with
alternate angles 60\degree and 120\degree, so each tile has 4 sides and 4 vertices. We
use $\tau$ of these tiles ($\tau> 4$) to form a tiling $\mathcal{T}$ of part of a plane. In $\mathcal{T}$, the
position of any vertex of a tile is a vertex of $\mathcal{T}$, and the position of any side
of a tile is a side of $\mathcal{T}$ .

To be a \emph{tredoku tiling}, $\mathcal{T}$ must have these properties:
\begin{enumerate}
\item $\mathcal{T}$ is connected, with no connection consisting merely of two tiles touch-
ing at just one vertex;
\item If two tiles in $\mathcal{T}$ touch, either they do so only at a vertex or they have
a side in common in $\mathcal{T}$;
\item $\mathcal{T}$ cannot be disconnected by removing just one tile (but a connection
consisting merely of two tiles touching at just one vertex is allowed
after the removal);
\item If tiles A and B share a side $p$ of $\mathcal{T}$, then there is a third tile C that
shares a side $q$ of either A or B, where $q$ is parallel to $p$. This gives a
``run" of three tiles.
\item Runs of more than three tiles are not allowed.
\item $\mathcal{T}$ must not have any holes in it.
\end{enumerate}
(There is no requirement for any part of the tiling to repeat.) If a tredoku
tiling has $\rho$ runs of three tiles, then
\[
\frac{3}{2}\rho\leq \tau\leq 2\rho+1.
\]
Many existence results and constructions are available for tredoku tilings.
Motivation: If each tile in $\mathcal{T}$ is subdivided into a $3\times 3$ array of smaller tiles,
we have an overall grid for a tredoku\textsuperscript{\textcopyright}  puzzle such as appears daily in \emph{The Times}.

\end{document}